\newtheorem{theorem}{Theorem}[section]
\newtheorem{remark}[theorem]{Remark}
\newtheorem{proposition}[theorem]{Proposition}
\newtheorem{assumptions}{Assumptions}{\bf}{\rm}
\providecommand{\algorithmname}{Algorithm}
\def\maketag@@@#1{\hbox{\m@th\normalfont\normalsize#1}}
\definecolor{darkred}{rgb}{0,0,0}
\newcommand\mi[1]{\textcolor{darkred}{#1}}
\definecolor{darkgreen}{rgb}{0,0,0}
\definecolor{darkblue}{rgb}{0,0,0}
\newcommand\mmd[1]{\textcolor{darkblue}{#1}}
\renewcommand{\Delta}{\triangle}
\newcommand{\eps}{\varepsilon} 
\newcommand{\dee}{\mathrm{d}}
\newcommand{\tr}{\mathrm{tr}}
\newcommand{\bbK}{\mathbb K}
\newcommand{\bbL}{\mathbb L}
\begin{document}
\sloppy

\title{Hierarchical Bayesian Level Set Inversion
}


\author[M. M. Dunlop]{Matthew M. Dunlop}
\address[]{Computing \& Mathematical Sciences, California Institute of Technology, Pasadena, CA, USA}
 \email{mdunlop@caltech.edu}
 
\author[M. A. Iglesias]{Marco A. Iglesias}
\address[]{School of Mathematical Sciences, University of Nottingham, Nottingham, UK}
 \email{marco.iglesias@nottingham.ac.uk} 
 
\author[A. M. Stuart]{Andrew M. Stuart}
\address[]{Computing \& Mathematical Sciences, California Institute of Technology, Pasadena, CA, USA}
\email{astuart@caltech.edu} 

\maketitle

\begin{abstract}
The level set approach has proven widely successful in the study of
inverse problems for interfaces, 
since its systematic development in the 1990s. Recently it
has been employed in the context of Bayesian inversion, allowing for the
quantification of uncertainty within the reconstruction of interfaces. 
However the
Bayesian approach is very sensitive to the length and amplitude scales
in the prior probabilistic model. This paper demonstrates how
the scale-sensitivity can be circumvented by means of a hierarchical approach,
using a single scalar parameter. Together with careful consideration of
the development of algorithms which encode probability measure
equivalences as the hierarchical parameter is varied, this leads to 
well-defined Gibbs based MCMC methods found by alternating 
Metropolis-Hastings updates of the level set function and the hierarchical
parameter. These methods demonstrably outperform non-hierarchical
Bayesian level set methods.
\keywords{Inverse problems for interfaces\and Level set inversion\and Hierarchical Bayesian methods}
\end{abstract}

\section{Introduction}
\label{sec:I}

\subsection{Background}
\label{ssec:bg}

The level set method has been pervasive as a tool for the
study of interface problems since its introduction 
in the 1980s \cite{Osher1988}. In a seminal paper in the 1990s, 
Santosa demonstrated the power of the approach for the study of
inverse problems with unknown interfaces \cite{Santosa1996}. 
The key benefit of adopting the level set parametrization 
of interfaces is that topological changes are permitted. In
particular for inverse problems the number of connected components 
of the field does not need to be known a priori. The idea is illustrated
in Figure \ref{fig:idea}. The type of unknown functions that we might
wish to reconstruct are piecewise continuous functions, illustrated in
the bottom row by piecewise constant ternary functions. However in the
inversion we work with a smooth function, shown in the top row and
known as the {\em level-set function}, which
is thresholded to create the desired unknown function in the bottom row.
This allows the inversion to be performed on smooth functions, and
allows for topological changes to be detected during the course of
algorithms.  After Santosa's paper there were many subsequent
papers employing the level set representation for classical
inversion, and examples include \cite{Burger2001, Tai2004, Chung2005, Dorn2006},
and the references therein.

In many inverse problems arising in modern day science and engineering, 
the data is noisy and prior regularizing information is naturally
expressed probabilistically since it contains uncertainties. In this
context, Bayesian inversion is a very attractive conceptual approach
\cite{kaipio}. Early adoption of the Bayesian approach within level set 
inversion, especially in the context of history matching for reservoir 
simulation, includes the papers 
\cite{Xie2011,Ping2014,Lorentzen2012,Lorentzen2012_2}. 
In a recent paper \cite{levelset} the mathematical foundations of 
Bayesian level set inversion were developed, and a well-posedness theorem
established, using the infinite dimensional Bayesian framework
developed in \cite{inverse,lasanen1,lasanen2,lecturenotes}.
An ensemble Kalman filter method has also been applied in the Bayesian level 
set setting \cite{reg_enkf} to produce estimates of piecewise constant 
permeabilities/conductivities in groundwater flow/electrical impedance 
tomography (EIT) models.

For linear Bayesian inverse problems, the adoption of Gaussian priors leads
to Gaussian posteriors, formulae for which can be explicitly computed  
\cite{linear1,linear2,linear3}. However the {\em level set map}, which takes
the smooth underlying level set function (top row, Figure \ref{fig:idea})
into the physical unknown function (bottom row, Figure \ref{fig:idea}) is
nonlinear; indeed it is discontinuous. As a consequence, Bayesian level set inversion, even for
inverse problems which are classically-speaking `linear', does not
typically admit closed form solutions for the posterior distribution
on the level set function. 
Thus, in order to produce samples from the posterior arising in the Bayesian 
approach, MCMC methods are often used. Since the posterior is typically 
defined on an infinite-dimensional space in the context of inverse problems, 
it is important that the MCMC algorithms used are well-defined on such spaces. 
A formulation of the Metropolis-Hastings algorithm on general state spaces 
is given in \cite{tierney}. A particular case of this algorithm, well-suited 
to posterior distributions on function spaces and Gaussian priors, is the 
preconditioned Crank-Nicolson (pCN) method introduced (although not named
this way) in \cite{diff_bridges}. As the method is defined directly 
on a function space, it has desirable properties related 
to discretization -- in particular the method is robust with respect to 
mesh refinement (discretization invariance) -- 
see \cite{Cotter2013} and the references therein.
On the other hand, the need for hierarchical models in Bayesian
statistics, and in particular in the context of non-parametric
(i.e. function space) methods in machine learning, 
is well-established \cite{bishop2006pattern}.
However, care is needed when using hierarchical methods in order
to ensure that discretization invariance is not 
lost \cite{agapiou2014analysis}.  In this paper we demonstrate 
how hierarchical methods can be employed in the context of 
discretization-invariant MCMC methods for Bayesian level
set inversion.

\begin{figure*}
\begin{center}
\includegraphics[width=0.9\linewidth]{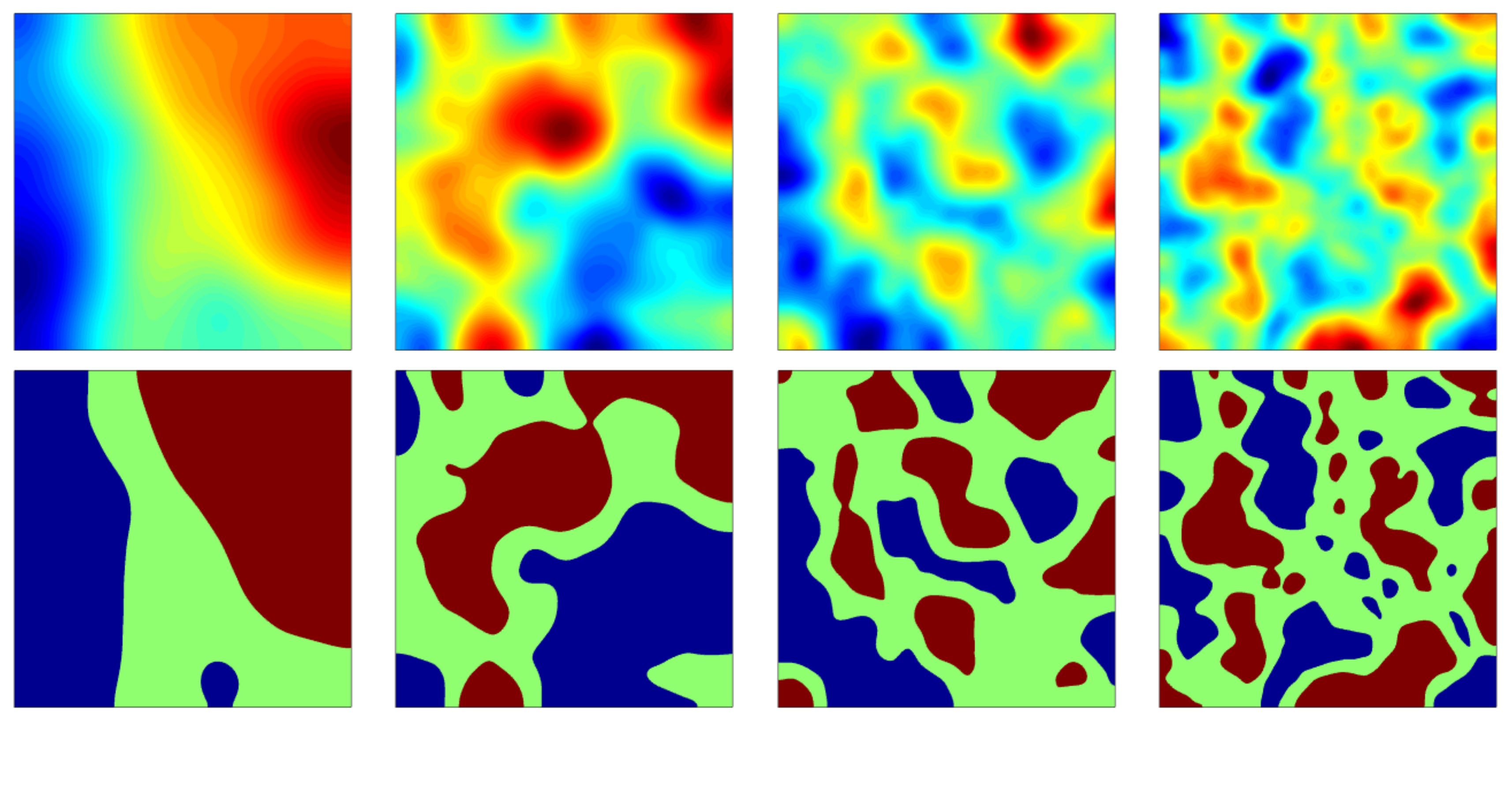}
\caption{Four continuous scalar fields (top) and the corresponding ternary fields formed by thresholding these fields at two levels (bottom). The smooth 
function in the top row is known as the {\em level-set function} and is used
in the inversion procedure. The discontinuous function in the
bottom row is the physical unknown.}
\label{fig:idea}
\end{center}
\end{figure*}

\subsection{Key Contributions of the Paper}
\label{ssec:key}

\mmd{The key contribution of this paper is in computational statistics: we develop a Metropolis Hastings method with mesh-independent mixing properties that makes an order of magnitude of improvement in the Bayesian level set method as introduced in \cite{levelset}}.

Study of Figure \ref{fig:idea} suggests that the ability 
of the level set representation to accurately reconstruct
piecewise continuous fields depends on two important scale parameters:

\begin{itemize}

\item the length-scale of the level set function, and its relation to
the typical separation between discontinuities;

\item the amplitude-scale of the level set function, and its relation to
the levels used for thresholding.

\end{itemize}

If these two scale parameters are not set correctly then MCMC methods
to determine the level set function from data can perform poorly. This
immediately suggests the idea of using hierarchical Bayesian methods in
which these  parameters are learned from the data. However there is a
second consideration which interacts with this discussion. From the
work of Tierney \cite{tierney} it is known that absolute continuity of
certain measures arising in the definition of Metropolis-Hastings methods 
is central \mmd{to} their well-definedness, and hence to discretization
invariant MCMC methods \cite{Cotter2013}. \mmd{In fact it appears algorithms defined on infinite dimensional spaces have spectral gaps that are bounded independently of the mesh, and so their convergence rates are bounded below in the limit \cite{spectralgap}}.
The key contribution of our paper
is to show how enforcing absolute continuity links the two scale 
parameters, and hence leads to the construction of a hierarchical 
Bayesian level set method with a {\em single scalar} 
hierarchical parameter which deals with the scale and absolute continuity 
issues simultaneously, resulting in effective sampling algorithms. 

The hierarchical parameter is an inverse length-scale within a Gaussian
random field prior for the level set function. In order to preserve
absolute continuity of different priors on the level set function as the
length-scale parameter varies, and relatedly to make
well-defined MCMC methods, the mean square amplitude of this Gaussian random 
field must decay proportionally to a power of the inverse length-scale. 
It is thus natural that the level values used for thresholding
should obey this power law relationship with respect 
to the hierarchical parameter.  As a consequence the 
likelihood depends on the hierarchical parameter,
leading to a novel form of posterior distribution.

We construct this posterior distribution and demonstrate how to sample from it
using a Metropolis-within-Gibbs algorithm which alternates between
updating the level set function and the inverse length scale. 
As a second contribution of the paper, we
demonstrate the applicability of the algorithm 
on three inverse problems, by means of simulation studies.
The first concerns reconstruction of a ternary piecewise constant field
from a finite noisy set of point measurements\mmd{: in this context, the Bayesian level set method is very closely related to a spatial probit model \cite{rasmussen-williams}.} \mi{This relation is discussed in in subsection \ref{ssec:probit}.} The other two concern
reconstruction of the coefficient of a divergence form
elliptic PDE from measurements
of its solution; in particular, groundwater flow (in which measurements
are made in the interior of the domain) and EIT (in which measurements
are made on the boundary). 

\subsection{Structure of the Paper}
\label{ssec:structure}

In section \ref{sec:P} we describe a family of prior distributions on the 
level set function, indexed by an inverse length scale parameter, which remain 
absolutely continuous with respect to one another when we vary this parameter; 
we then place a hyper-prior on this parameter. We describe an appropriate 
level set map, dependent on the length-scale parameter because length
and amplitude scales are intimately connected through absolute continuity
of measures, to transform these fields into piecewise constant ones, 
and use this level set map in the construction of the likelihood. We end by 
showing existence and well-posedness
of the posterior distribution on the level set function 
and the inverse length scale parameter.
In section \ref{sec:M} we describe a Metropolis-within-Gibbs MCMC algorithm 
for sampling the posterior distribution, taking advantage of existing 
state-of-the-art function space MCMC, and the absolute continuity of our 
prior distributions with respect to changes in the inverse length
scale parameter, established in the previous section.
Section \ref{sec:N} contains numerical experiments for three different 
forward models: a linear map comprising pointwise observations, 
groundwater flow and EIT; these
illustrate the behavior of the algorithm and, in particular, demonstrate
significant improvement with respect to 
non-hierarchical Bayesian level set inversion.

\section{Construction of the Posterior}
\label{sec:P}
In subsection \ref{ssec:prior} we recall the definition of the 
Whittle-Mat\'ern covariance functions, and define a related family 
of covariances parametrized by an inverse length scale parameter $\tau$. 
We use these covariances to define our prior on the level set function $u$, 
and also place a hyperprior on the parameter $\tau$, yielding a prior 
$\mathbb{P}(u,\tau)$ on a product space. In subsection \ref{ssec:like} 
we construct the level set map, taking into account the amplitude
scaling of prior samples with $\tau$, and incorporate this into the 
forward map. The inverse problem is formulated, and the resulting 
likelihood $\mathbb{P}(y|u,\tau)$ is defined. Finally in 
subsection \ref{ssec:post} we construct the posterior $\mathbb{P}(u,\tau|y)$ 
by combining the prior $\mathbb{P}(u,\tau)$ and likelihood 
$\mathbb{P}(y|u,\tau)$ using Bayes' formula.  
Well-posedness of this posterior is established.

\subsection{Prior}
\label{ssec:prior}

As discussed in the introduction it can be important, within the
context of Bayesian level set inversion, to attempt to learn the
length-scale of the level set function whose level sets determine
interfaces in piecewise continuous reconstructions. This is because
we typically do not know a-priori the typical separation of interfaces. 
It is also computationally expedient to work with Gaussian random 
field priors for the level set function, as demonstrated in 
\cite{levelset,eit}. A family of covariances parameterized by 
length scale is hence required.

A widely used family of distributions, 
allowing for control over sample regularity,
amplitude and length scale, are Whittle-Mat\'{e}rn distributions. 
These are a family of stationary Gaussian distributions with covariance function
\[
c_{\sigma,\nu,\ell}(x,y) = \sigma^2\frac{2^{1-\nu}}{\Gamma(\nu)}\left(\frac{|x-y|}{\ell}\right)^\nu K_\nu\left(\frac{|x-y|}{\ell}\right)
\] 
where $K_\nu$ is the modified Bessel function of the second kind of order $\nu$ \cite{matern1,matern2}. These covariances interpolate between exponential covariance, for $\nu = 1/2$, and Gaussian covariance, for $\nu\rightarrow\infty$. As a consequence, 
the regularity of samples increases as the parameter $\nu$ increases. The parameter $\ell > 0$ acts as a characteristic length scale (sometimes referred to as the spatial
range) and $\sigma$ as an amplitude scale 
($\sigma^2$ is sometimes referred to as the marginal variance).
On $\mathbb{R}^d$, samples from a Gaussian distribution with covariance function $c_{\sigma,\nu,\ell}$ correspond to the solution of a particular 
stochastic partial differential equation (SPDE). This SPDE can be derived 
using the Fourier transform and the spectral representation of covariance 
functions --  the paper \cite{whittlematern} derives 
the appropriate SPDE for the covariance function above: 
\begin{align}
\label{eq:spde}
\frac{1}{\sqrt{\beta\ell^d}}(I-\ell^2\Delta)^{(\nu+d/2)/2}v = W
\end{align}
where $W$ is a white noise on $\mathbb{R}^d$, and
\[
\beta = \sigma^2 \frac{2^d\pi^{d/2}\Gamma(\nu+d/2)}{\Gamma(\nu)}.
\]
Computationally, implementation of this SPDE approach requires
restriction  to a bounded subset $D\subseteq\mathbb{R}^d$, and 
hence the provision of boundary conditions for the SPDE in order to obtain a unique solution. Choice of these boundary conditions may significantly affect the autocorrelations near the boundary. The effects for different boundary conditions are discussed in \cite{whittlematern}. Nonetheless,
the computational expediency of the SPDE formulation makes the approach
very attractive for applications and, if necessary, boundary effects
can be ameliorated by generating the random fields on larger domains 
which are a superset of the domain of interest.

From (\ref{eq:spde}) it can be seen that the covariance operator corresponding to the covariance function $c_{\sigma,\nu,\ell}$ is given by
\begin{align}
\label{wmcov}
\mathcal{D}_{\sigma,\nu,\ell} = \beta\ell^d(I - \ell^2\Delta)^{-\nu-d/2}.
\end{align}
The fact that the scalar multiplier in front of the covariance operator
$\mathcal{D}_{\sigma,\nu,\ell}$ changes with the length-scale means that
the family of measures $\{N(0,\mathcal{D}_{\sigma,\nu,\ell})\}_{\ell}$, 
for fixed $\sigma$ and $\nu$, are mutually singular. This leads to
problems when trying to design hierarchical methods based around 
these priors. We hence 
work instead with the modified covariances
\[
\mathcal{C}_{\alpha,\tau} = (\tau^2 I - \Delta)^{-\alpha}
\]
where $\tau = 1/\ell > 0$ now represents an \emph{inverse} length scale, and $\alpha = \nu+d/2$ still controls the sample regularity. To be
concrete we will always assume that the domain of the Laplacian is chosen
so that $\mathcal{C}_{\alpha,\tau}$ is well-defined for all $\tau \ge 0$;
for example we may choose a periodic box, with domain restricted to functions
which integrate to zero over the box, Neumann boundary conditions on a box, 
again with domain restricted to functions which integrate to zero over the box, 
or Dirichlet boundary conditions. We have the following theorem concerning the family of Gaussians \mi{$\{N(0,\mathcal{C}_{\alpha,\tau})\}_{\tau\geq 0}$}, proved in \mmd{the} Appendix.

\begin{theorem}
\label{t:wmequiv}
Let $D = \mathbb{T}^d$ be the $d$-dimensional torus, and fix $\alpha > 0$. Define the family of Gaussian measures \mi{$\mu^{\tau}_0 = N(0,\mathcal{C}_{\alpha,\tau})$}, $\tau \geq 0$. Then
\begin{enumerate}[(i)]
\item for $d \leq 3$, the $\{\mu_0^\tau\}_{\tau \geq 0}$ are mutually equivalent; 
\item if $u \sim \mu_0^\tau$, then $\mu_0^\tau$-a.s. we have \mi{$u \in H^s(D)$} and 
\mi{$u \in C^{\lfloor s \rfloor,s-\lfloor s \rfloor}(D)$} for all $s< \alpha - d/2.$ 
\footnote{i.e. the function has $s$ weak (possibly fractional) 
derivatives in the Sobolev sense, and the 
$\lfloor s \rfloor^{th}$ classical derivative is H\"older with
exponent $s-\lfloor s \rfloor$;}
\mi{\item if $u \sim \mu_0^\tau$ and $v \sim N(0,\mathcal{D}_{\sigma,\nu,\ell})$, then
\[
\mathbb{E}\|u\|^2 \propto \tau^{d-2\alpha}\cdot\mathbb{E}\|v\|^2
\]
with constant of proportionality independent of $\tau.$}
\end{enumerate}
\end{theorem}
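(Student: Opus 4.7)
The plan is to treat the three claims separately, exploiting throughout the fact that on $\mathbb{T}^d$ the operator $-\Delta$ is diagonal in the mean-zero Fourier basis $\{\phi_k\}_{k\in\mathbb{Z}^d\setminus\{0\}}$ with eigenvalues $|k|^2$, so that $\mathcal{C}_{\alpha,\tau}$ has eigenvalues $\lambda_k(\tau) = (\tau^2+|k|^2)^{-\alpha}$ in the same basis. This reduces every assertion to an explicit series computation.

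For part (i) the tool is the Feldman--Hajek theorem applied to $\mu_0^{\tau_1}$ and $\mu_0^{\tau_2}$. The Cameron--Martin spaces $\mathrm{range}(\mathcal{C}_{\alpha,\tau_i}^{1/2})$ coincide because the ratios $\lambda_k(\tau_1)/\lambda_k(\tau_2)$ are bounded above and below uniformly in $k$. The only remaining condition is that $\mathcal{C}_{\alpha,\tau_2}^{-1/2}\mathcal{C}_{\alpha,\tau_1}\mathcal{C}_{\alpha,\tau_2}^{-1/2} - I$ be Hilbert--Schmidt, which in the shared basis reads
\[
\sum_{k\in\mathbb{Z}^d\setminus\{0\}} \left[\left(\frac{\tau_2^2+|k|^2}{\tau_1^2+|k|^2}\right)^\alpha - 1\right]^2 < \infty.
\]
A Taylor expansion gives each bracket as $\alpha(\tau_2^2-\tau_1^2)/|k|^2 + O(|k|^{-4})$ for large $|k|$, so the summand is $O(|k|^{-4})$ and the series converges precisely when $d \leq 3$.

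For part (ii) the Karhunen--Lo\`eve representation $u = \sum_{k\neq 0} \lambda_k(\tau)^{1/2}\xi_k\phi_k$ with iid $\xi_k \sim N(0,1)$ gives
\[
\mathbb{E}\|u\|_{H^s}^2 \asymp \sum_{k\neq 0} (1+|k|^2)^s(\tau^2+|k|^2)^{-\alpha},
\]
which is finite (hence $u \in H^s$ almost surely) iff $s < \alpha - d/2$. The H\"older assertion is the most delicate point, since the naive Sobolev embedding $H^r \hookrightarrow C^{k,\gamma}$ requires $r > k+\gamma+d/2$ and thus loses an extra $d/2$ derivatives relative to the sharp claim. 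I would instead bound the variogram
\[
\mathbb{E}|u(x)-u(y)|^2 = \sum_{k\neq 0} (\tau^2+|k|^2)^{-\alpha}|\phi_k(x)-\phi_k(y)|^2
\]
by splitting the sum at $|k|\sim |x-y|^{-1}$, which yields an $|x-y|^{2(\alpha-d/2)}$ estimate when $\alpha-d/2 \in (0,1)$, and then invoke Kolmogorov's continuity theorem together with the Gaussian moment bound $\mathbb{E}|u(x)-u(y)|^p \leq C_p(\mathbb{E}|u(x)-u(y)|^2)^{p/2}$; letting $p \to \infty$ recovers H\"older exponent arbitrarily close to $\alpha - d/2$. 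For $\alpha - d/2 > 1$ the same argument is applied to weak derivatives, which are themselves centered Gaussians of the same functional form with $\alpha$ decreased by the order of differentiation; iteration produces $C^{\lfloor s \rfloor,\, s-\lfloor s \rfloor}$ regularity for every $s < \alpha - d/2$.

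Part (iii) is essentially algebraic: factoring $I - \ell^2\Delta = \ell^2(\tau^2 I - \Delta)$ with $\tau = 1/\ell$ yields the operator identity $\mathcal{D}_{\sigma,\nu,\ell} = \beta\ell^{d-2\alpha}(\tau^2 I - \Delta)^{-\alpha} = \beta\tau^{2\alpha-d}\mathcal{C}_{\alpha,\tau}$. Taking $L^2(D)$ traces then gives $\mathbb{E}\|u\|^2 = \beta^{-1}\tau^{d-2\alpha}\mathbb{E}\|v\|^2$, with $\beta^{-1}$ depending only on $\sigma,\nu,d$. The principal obstacle in the overall argument is thus the sharp H\"older statement in (ii), where one must avoid the wasteful Sobolev embedding and exploit Gaussianity directly via Kolmogorov; parts (i) and (iii) amount to bookkeeping once the Fourier diagonalisation is in place.
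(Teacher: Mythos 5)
Your proposal is correct, and for parts (i) and (iii) it follows essentially the same route as the paper: part (i) is the Feldman--H\'ajek/Hilbert--Schmidt criterion applied with the shared eigenbasis of $-\Delta$ on $\mathbb{T}^d$, with the summand shown to be $O(|k|^{-4})$ (the paper phrases this with ordered eigenvalues $\lambda_j \asymp (\tau^2+j^{2/d})^{-\alpha}$ and the bound $(1+x)^{-\alpha}-1 \le e^{\alpha x}-1 \le Cx$, but the estimate is the same), and part (iii) is exactly the operator identity $\mathcal{D}_{\sigma,\nu,\ell} = \beta\tau^{2\alpha-d}\mathcal{C}_{\alpha,\tau}$ followed by taking traces. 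The one place you genuinely diverge is part (ii): the paper disposes of it in one line by citing an external regularity theorem for the case $\tau=0$ and then transferring the almost-sure property to all $\tau>0$ via the equivalence of measures just established in part (i) --- a trick you do not use. You instead sketch a direct, self-contained proof for general $\tau$ via the Karhunen--Lo\`eve expansion for the Sobolev claim and a variogram estimate plus Kolmogorov's continuity criterion (with Gaussian moment equivalence) for the H\"older claim; this is essentially the content of the cited result, and you correctly identify that the naive Sobolev embedding would lose $d/2$ derivatives. Your version is longer but self-contained; the paper's is shorter but leans on a reference and on the measure-equivalence transfer, which is worth noting as an economical device whenever an almost-sure property is already known for one member of an equivalent family.
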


\begin{remark}
\label{rem:thm1}
\begin{enumerate}[(a)]
\item Proof of this theorem is driven by the smoothness of the eigenfunctions of the
Laplacian subject to periodic boundary conditions, together with
the growth of the eigenvalues, which is like $j^{2/d}.$ These
properties extend to Laplacians on more general domains and with more general
boundary conditions, and to Laplacians with lower order perturbations,
and so the above result still holds in these cases. 
For discussion of this in relation to (ii) see \cite{lecturenotes};
for parts (i) and (iii)
the reader can readily extend the proof given in the Appendix.

\mmd{\item The proportionality in part (iii) above could be simplified if it were the case that \mi{$\mathbb{E}\|v\|^2$} were independent of $\tau$. However since we restrict to a bounded domain $D\subset\mathbb{R}^d$, boundary effects mean that this isn't necessarily true. Neumann boundary conditions for example inflate the variance up to a distance of approximately $\ell\sqrt{8\nu} = \sqrt{8\nu}/\tau$ from the boundary \cite{lindgren-rue15}. Nonetheless, at points $x \in D$ sufficiently far away from the boundary we have \mi{$\mathbb{E}|v(x)|^2\approx\sigma^2$} independently of $x$. At these points we would hence expect that, for $u \sim \mu_0^\tau$,
\mi{\[
\mathbb{E}|u(x)|^2 \propto \tau^{d-2\alpha}.
\]}Note also that numerically, we may produce samples on a larger domain $D^*$ that contains the domain of interest $D$, in order to minimize the boundary effects within $D$.$\hfill\qed$}

\end{enumerate}
\end{remark}

Let \mi{$X = C(D)$} denote the space of continuous real-valued
functions on domain $D$. In what follows we will always assume that $\alpha - d/2 > 0$ in order that the measures have samples in $X$ almost-surely. Additionally we shall write $\mathcal{C}_\tau$ in place of $\mathcal{C}_{\alpha,\tau}$ when the parameter $\alpha$ is not of interest.

In subsection \ref{ssec:like}, we pass the inverse length scale parameter $\tau$ to the forward map and treat it as an additional unknown in the inverse problem. We therefore require a joint prior $\mathbb{P}(u,\tau)$ on both the \mmd{level set} field and on $\tau$. We will treat $\tau$ as a hyper-parameter, so that $\mathbb{P}(u,\tau)$ takes the form $\mathbb{P}(u,\tau) = \mathbb{P}(u|\tau)\mathbb{P}(\tau)$. Specifically, we will take the conditional distribution $\mathbb{P}(u|\tau)$ to be given by \mi{$\mu_0^\tau = N(0,\mathcal{C}_{\tau})$}, and the hyper-prior $\mathbb{P}(\tau)$ to be any probability measure $\pi_0$ on $\mathbb{R}^+$, the set of positive reals; in practice it will always have a Lebesgue density
on $\mathbb{R}^+$. The joint prior $\mu_0$ on $X\times\mathbb{R}^+$ is therefore assumed to be given by
\begin{align}
\label{eq:prior}
\mu_0(\dee u,\dee \tau) = \mu_0^\tau(\dee u)\pi_0(\dee \tau).
\end{align}
\mi{Non-zero means could also be considered via a change of coordinates.} Discussion of prior choice for the hierarchical parameters in
latent Gaussian models may be found in \cite{fuglstad2015interpretable}.

\subsection{Likelihood}
\label{ssec:like}
In the previous subsection we defined a prior distribution $\mu_0$ on $X\times\mathbb{R}^+$. We now define a way of constructing  a piecewise constant field from a sample $(u,\tau)$. In \cite{levelset}, where the Bayesian level set method was introduced, the piecewise constant field was constructed purely as a function of $u$ as follows. Let $n \in \mathbb{N}$ and fix constants $-\infty = c_0 < c_1 < \ldots < c_n = \infty$. Given $u \in X$, define $D_i(u)\subseteq D$ by
\[
D_i(u) = \{x \in D\,|\, c_{i-1} \leq u(x) < c_i\},\;\;\;i=1,\ldots,n
\]
so that\footnote{\mi{For any subset $A\subset\mathbb{R}^d$ we will denote by $\overline{A}$ its closure in $\mathbb{R}^d$.}} $\overline{D} = \bigcup_{i=1}^n \overline{D}_i(u)$ and $D_i(u)\cap D_j(u) = \varnothing$ for $i\neq j$, $i,j \geq 1$. 
Then given $\kappa_1,\ldots,\kappa_n \in \mathbb{R}$, define the map $F:X\rightarrow Z$ by
\begin{align}
\label{eq:lvl_original}
F(u) = \sum_{i=1}^n \kappa_i\mathds{1}_{D_i(u)}.
\end{align}
\mmd{Thus $F$ maps the level set field to the geometric field, which is the field of interest, even though inference is performed on the level set field}. We may take \mi{$Z = L^p(D)$}, the space of $p$-integrable functions 
on $D$, for any $1\leq p \le \infty$. \mmd{$F(u)$ then defines a piecewise constant function on $D$; the interfaces defined by the jumps are given by the level sets $\{x\in D\,|\, u(x) = c_i\}$.}
\mmd{
\begin{remark}
One of the constraints of this construction, discussed in \cite{levelset}, is that in order for $F(u)$ to pass from $\kappa_i$ to $\kappa_j$, it must pass through all of $\kappa_{i+1},\ldots,\kappa_{j-1}$ first. Thus this construction cannot represent, for example, a triple junction. \mi{This also means that that it must be known a priori that, for example, level $i$ is typically found near levels $i-1$ and $i+1$, but unlikely to be found near levels $i+3$ or $i+4$. This is potentially a significant constraint;} we discuss how this may be dealt with in the conclusions. $\hfill\qed$
\end{remark}
}
This construction is effective for a 
fixed value of $\tau$, but in light of Theorem \ref{t:wmequiv}(iii), 
the amplitude of samples from \mi{$N(0,\mathcal{C}_{\alpha,\tau})$},
varies with $\tau$. More specifically, since $d-2\alpha < 0$ by assumption, 
samples will decay towards \mi{zero} as $\tau$ increases.  For this reason,
employing fixed levels $\{c_i\}_{i=0}^n$ and then changing the value of $\tau$
during a sampling method may render the levels  out of reach. We can 
compensate for this by allowing the levels to change with $\tau$, so that 
they decay towards \mi{zero} at the same rate as the samples.

From Theorem \ref{t:wmequiv}(iii) \mmd{and Remark \ref{rem:thm1}(b)} we deduce that samples $u$ from \mi{$N(0,\mathcal{C}_{\alpha,\tau})$ decay towards zero} at a rate of \mmd{approximately} $\tau^{d/2-\alpha}$ with respect to $\tau$. This suggests allowing for the following dependence of the levels
on the length scale parameter $\tau$:  
\mi{\begin{align}
\label{eq:ctau}
c_i(\tau) = \tau^{d/2-\alpha}c_i,\;\;\;i=1,\ldots,n.
\end{align}}In order to update these levels, we must pass the parameter $\tau$ to the level set map $F$. We therefore \mi{redefine the} level set map $F:X\times\mathbb{R}^+\rightarrow Z$ as follows. Let $n \in \mathbb{N}$, fix initial levels $-\infty = c_0 < c_1 < \ldots < c_n = \infty$ and define $c_i(\tau)$ by (\ref{eq:ctau}) for $\tau > 0$.  Given $u \in X$ and $\tau > 0$, define $D_i(u,\tau)\subseteq D$ by
\begin{align}
\label{eq:di}
D_i(u,\tau) = \{x \in D\;|\;c_{i-1}(\tau) \leq u(x) &< c_i(\tau)\},\;\;\;i=1,\ldots, n,
\end{align}
so that $\overline{D} = \bigcup_{i=1}^n \overline{D}_i(u,\tau)$ and $D_i(u,\tau)\cap D_j(u,\tau) = \varnothing$ for $i\neq j$, $i,j \geq 1$. Now given $\kappa_1,\ldots,\kappa_n \in \mathbb{R}$, we define the map $F:X\times\mathbb{R}^+\rightarrow Z$ by
\begin{align}
\label{lvlsetmap}
F(u,\tau) = \sum_{i=1}^n \kappa_i\mathds{1}_{D_i(u,\tau)}.
\end{align}

We can now \mmd{define the likelihood}. Let $Y = \mathbb{R}^J$ be the data space, and let $S:Z\rightarrow Y$ be a forward operator. 
Define $\mathcal{G}:X\times\mathbb{R}^+\rightarrow Y$ 
by $\mathcal{G} = S\circ F$. Assume we have data 
$y \in Y$ arising from observations of some 
$(u,\tau) \in X\times\mathbb{R}^+$ under $\mathcal{G}$, 
corrupted by Gaussian noise $\eta \sim \mathbb{Q}_0 := N(0,\Gamma)$ on $Y$:
\begin{equation}
y = \mathcal{G}(u,\tau) + \eta.
\label{eq:be}
\end{equation}
We now construct the likelihood $\mathbb{P}(y|u,\tau)$. In the Bayesian formulation, we place a prior $\mu_0$ of the form (\ref{eq:prior}) on the pair $(u,\tau)$. Assuming $\mathbb{Q}_0$ is independent of $\mu_0$, the conditional distribution $\mathbb{Q}_{u,\tau}$ of $y$ given $(u,\tau)$ is given by
\begin{align}
\label{eq:qudensity}
\frac{\dee \mathbb{Q}_{u,\tau}}{\dee\mathbb{Q}_0}(y) =\exp\bigg(-\Phi(u,\tau;y) + \frac{1}{2}|y|_\Gamma^2\bigg)
\end{align}
where the potential (or negative log-likelihood) $\Phi:X\times\mathbb{R}^+\rightarrow\mathbb{R}$ is defined by
\begin{align}
\label{eq:phi}
\Phi(u,\tau;y) = \frac{1}{2}|y - \mathcal{G}(u,\tau)|_\Gamma^2.
\end{align}
and $|\cdot|_\Gamma := |\Gamma^{-1/2}\cdot|$.

Denote $\mathrm{Im}(F) \subseteq Z$ the image of $F:X\times\mathbb{R}^+\rightarrow Z$. In what follows we make the following assumptions on $S:Z\rightarrow Y$. 
\begin{assumptions}
\label{ass:forward}

\begin{enumerate}[(i)]
\item $S$ is continuous on $\mathrm{Im}(F)$.
\item For any $r > 0$ there exists $C(r) > 0$ such that for any $z \in \mathrm{Im}(F)$ with $\|z\|_{L^\infty} \leq r$, $|S(z)| \leq C(r)$.
\end{enumerate}
\end{assumptions}
In the next subsection we show that, under the above assumptions, the posterior distribution $\mu^y$ of $(u,\tau)$ given $y$ exists, and study its properties.

\subsection{Posterior}
\label{ssec:post}

Bayes' theorem provides a way to construct the posterior distribution $\mathbb{P}(u,\tau|y)$ using the ingredients of the prior $\mathbb{P}(u,\tau)$ and the likelihood $\mathbb{P}(y|u,\tau)$ from the previous two subsections. Informally 
we have
\begin{align*}
\mathbb{P}(u,\tau|y) &\propto \mathbb{P}(y|u,\tau)\mathbb{P}(u,\tau)\\
&\propto \exp\left(-\Phi(u,\tau;y)\right)\mu_0^\tau(u)\pi_0(\tau)
\end{align*}
after absorbing $y-$dependent constants from the likelihood into the 
normalization constant. In order to make this formula rigorous some care must 
be taken, since $\mu_0^\tau$ does not admit a Lebesgue density. The 
following is proved in the Appendix.

\begin{theorem}
\label{t:2}
Let $\mu_0$ be given by (\ref{eq:prior}), $y$ by \eqref{eq:be}
and $\Phi$ be given by (\ref{eq:phi}). Let Assumptions \ref{ass:forward} hold. If $\mu^y(du,d\tau)$ is the regular conditional probability
measure on $(u,\tau)|y$, then $\mu^y \ll \mu_0$ with Radon-Nikodym derivative
\[
\frac{\dee \mu^y}{\dee \mu_0}(u,\tau) = \frac{1}{Z}\exp\big(-\Phi(u,\tau;y)\big)
\]
where, for $y$ almost surely,
\[
Z:= \int_{X\times\mathbb{R}^+}\exp\big(-\Phi(u,\tau;y)\big)\,\mu_0(\dee u,\dee \tau) > 0.
\]
Furthermore $\mu^y$ is locally Lipschitz with respect to $y$, in the Hellinger distance: for all $y,y'$ with $\max\{|y|_\Gamma,|y'|_\Gamma\} < r$, there exists a $C = C(r) > 0$ such that
\[
d_{\mathrm{Hell}}(\mu^y,\mu^{y'}) \leq C|y-y'|_\Gamma.
\]
This implies that, for all $f \in L^2_{\mu_0}(X\times\mathbb{R}^+;E)$ 
for separable Banach space $E$,
\[
\|\mathbb{E}^{\mu^y}f(u,\tau) - \mathbb{E}^{\mu^{y'}}f(u,\tau)\|_E \leq C|y-y'|.
\]
\end{theorem}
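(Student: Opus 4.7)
The plan is to follow the standard Stuart-style framework for well-posedness of Bayesian inverse problems on function space, treating $(u,\tau) \in X \times \mathbb{R}^+$ as a single parameter with prior $\mu_0$ of the form \eqref{eq:prior}. The proof has three parts: (a) apply a disintegration-type Bayes' theorem to obtain the Radon-Nikodym derivative, (b) show the normalization constant is strictly positive for every $y$, and (c) derive the Hellinger continuity from boundedness and Lipschitz estimates on $\Phi$.

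For (a), I would first verify that $\Phi$ is jointly measurable on $X\times\mathbb{R}^+$. The level-set map $F(\cdot,\tau)$ is generically discontinuous on $X$, but the sets $D_i(u,\tau)$ are defined by strict inequalities in $(u,\tau)$ involving the continuously varying thresholds $c_i(\tau)$, so $F$ is Borel measurable into $Z$; combined with continuity of $S$ on $\mathrm{Im}(F)$ via Assumption \ref{ass:forward}(i), this yields measurability of $\mathcal{G}$ and hence of $\Phi$. Since $\Phi \geq 0$, one has $e^{-\Phi}\leq 1$, so the integrand is trivially $\mu_0$-integrable. The identity \eqref{eq:qudensity} gives the joint law of $(u,\tau,y)$ as $\exp(-\Phi + \tfrac12|y|_\Gamma^2)\,\mu_0\otimes\mathbb{Q}_0$; conditioning on $y$ in the manner of Theorem 6.31 in \cite{inverse} (the cited Stuart reference) produces the claimed formula for $d\mu^y/d\mu_0$.

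For (b), I would use Assumption \ref{ass:forward}(ii): since $F(u,\tau)$ only takes values in $\{\kappa_1,\ldots,\kappa_n\}$, we have $\|F(u,\tau)\|_{L^\infty}\leq \max_i|\kappa_i|=:r_0$, and hence $|\mathcal{G}(u,\tau)|\leq C(r_0)$ uniformly in $(u,\tau)$. This gives $\Phi(u,\tau;y)\leq M(|y|_\Gamma):=\tfrac12(|y|_\Gamma+\|\Gamma^{-1/2}\|C(r_0))^2$ uniformly on $X\times\mathbb{R}^+$, so $Z(y)\geq e^{-M(|y|_\Gamma)}>0$ for every $y$, which is stronger than the $y$-almost sure statement required. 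For (c), expand
\[
2\,d_{\mathrm{Hell}}(\mu^y,\mu^{y'})^2 = \int_{X\times\mathbb{R}^+}\left(\frac{e^{-\Phi(u,\tau;y)/2}}{\sqrt{Z(y)}}-\frac{e^{-\Phi(u,\tau;y')/2}}{\sqrt{Z(y')}}\right)^2 d\mu_0
\]
and split via the triangle inequality into a piece controlled by $|Z(y)-Z(y')|$ and a piece controlled by $|\Phi(u,\tau;y)-\Phi(u,\tau;y')|$. The uniform bound on $\mathcal{G}$ from (b) gives, for $|y|_\Gamma,|y'|_\Gamma<r$, a Lipschitz estimate $|\Phi(u,\tau;y)-\Phi(u,\tau;y')|\leq C(r)|y-y'|_\Gamma$ independent of $(u,\tau)$, and integrating against $\mu_0$ propagates this to $|Z(y)-Z(y')|\leq C(r)|y-y'|_\Gamma$ while keeping $Z(y),Z(y')\geq e^{-M(r)}$. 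The elementary inequality $|e^{-a/2}-e^{-b/2}|\leq \tfrac12|a-b|$ (for $a,b\geq 0$) then closes the bound.

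For the final implication, I would write any $f\in L^2_{\mu_0}(X\times\mathbb{R}^+;E)$ componentwise via Hahn-Banach duals and use the standard inequality
\[
\|\mathbb{E}^{\mu^y}f - \mathbb{E}^{\mu^{y'}}f\|_E \leq 2\|f\|_{L^2_{\mu_0}}\, d_{\mathrm{Hell}}(\mu^y,\mu^{y'}),
\]
proved by Cauchy-Schwarz after inserting the identity $\tfrac{d\mu^y}{d\mu_0} - \tfrac{d\mu^{y'}}{d\mu_0} = (\sqrt{d\mu^y/d\mu_0}+\sqrt{d\mu^{y'}/d\mu_0})(\sqrt{d\mu^y/d\mu_0}-\sqrt{d\mu^{y'}/d\mu_0})$. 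The only delicate point, and the sole place where the hierarchical structure plays a role beyond the single-parameter case, is the measurability of $F$ through the $\tau$-dependent thresholds $c_i(\tau)$; this is handled by the joint continuity of $(u,\tau)\mapsto u(x)-c_i(\tau)$ on $X\times\mathbb{R}^+$. All remaining steps are essentially mechanical adaptations of the non-hierarchical argument.
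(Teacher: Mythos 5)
Your proposal is correct in substance and verifies the same three ingredients the paper does (uniform boundedness of $\Phi$ from the finite range of $F$ plus Assumption \ref{ass:forward}(ii), positivity of $Z$, and a Lipschitz-in-$y$ estimate), but it handles the central difficulty --- the discontinuity of the level-set map --- differently. The paper does not argue Borel measurability of $F$ directly; instead its Proposition \ref{p:app}(ii) proves that $\Phi(\cdot,\cdot;y)$ is continuous $\mu_0$-\emph{almost surely}, by showing that the exceptional set $\{u(x)=c_i(\tau)\}$ has zero Lebesgue measure $\mu_0$-a.s.\ (a Tonelli argument exploiting that $u(x)$ is a nondegenerate scalar Gaussian under $\mu_0^\tau$), and then invokes Theorem 2.2 of \cite{levelset}, whose hypotheses (Assumptions 2.1 there) are exactly boundedness, a.s.\ continuity on the completed space, and Lipschitz dependence on $y$; the a.s.\ continuity is what delivers measurability there, and it is also reused elsewhere in the paper. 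Your route instead asserts Borel measurability of $F$ outright and then reproves the Hellinger machinery by hand. Both routes are valid, and yours is more self-contained, but the measurability claim is the one step you state too tersely: to make it rigorous you should note that $L^p(D)$ is separable, so by Pettis it suffices to check weak measurability, i.e.\ that $(u,\tau)\mapsto\int_D \mathds{1}_{[c_{i-1}(\tau),c_i(\tau))}(u(x))g(x)\,\dee x$ is measurable for each $g\in L^{p'}$, which follows from joint measurability of $(u,\tau,x)\mapsto \mathds{1}_{[c_{i-1}(\tau),c_i(\tau))}(u(x))$ and Tonelli --- interestingly, essentially the same Tonelli manoeuvre the paper uses for the a.s.-continuity statement. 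Your Hellinger and $L^2$-expectation estimates are the standard ones and close correctly given the uniform two-sided bounds $0\le\Phi\le M(r)$ and $Z\ge e^{-M(r)}$; the only cosmetic issue is that the constant in the final expectation bound should carry a factor depending on $e^{M(r)}$ when passing from $L^2_{\mu^y}$ norms to the $L^2_{\mu_0}$ norm.
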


To the best of our knowledge this form of Bayesian \mmd{posterior distribution},
in which the prior hyper-parameter appears in the likelihood because
it is natural to scale a thresholding function with that parameter, \mmd{for algorithmic reasons,}
is novel. A different form of thresholding is studied in the paper
\cite{bolin2015excursion} where boundaries defining regions in
which certain events occur with a specified (typically close to $1$)
probability is studied. 

\subsection{Relation to Probit Models}
\label{ssec:probit}

\mi{
The Bayesian level set method has a close relation with an ordered probit model in the case that the state space $X$ is finite dimensional. Suppose that $X = \mathbb{R}^N$, then neglecting the length scale parameter, the data $y_{\mathrm{level}}$ in the level set method is assumed to arise via 
\[
y_{\mathrm{level}} = \mathcal{G}(F(u)) + \eta,\;\;\;\eta \sim N(0,\Gamma)
\]
where $F$ denotes the original thresholding function as defined by (\ref{eq:lvl_original}). In an ordered probit model, the data $y_{\mathrm{prob}}$ is assumed to arise via\footnote{The thresholding function $F$ is defined pointwise, so can be considered to be defined on either $\mathbb{R}^N$ or $\mathbb{R}$, with $F(u)_n = F(u_n)$.}
\begin{align*}
y_{\mathrm{prob}} &= \mathcal{G}(z),\\
z_n &= F(u_n + \eps_n),\;\;\;\eps_n \sim N(0,1),\;\;\;n=1,\ldots,N.
\end{align*}
Note that in the case of probit, the noise is applied before the thresholding $F$ so that the geometric field takes values in the discrete set $\{\kappa_1,\ldots,\kappa_n\}$. In contrast in the case of the level set model the noise is applied after thresholding. If $\mathcal{G}$ is linear then the probit model results in categorical data, whilst in the level set case the data can take any real value. Depending on the forward model either probit or level set may be more appropriate: the former in cases where the data is genuinely discrete and interpolation between phases doesn't have a meaning, such as categorical data, and the latter when it is continuous, such as when corrupted by measurement noise. The two models could also be combined, which may be interesting in some applications. In the small noise limit the models are seen to be equivalent.
}

\mi{Placing a prior upon $u$ leads to a well-defined posterior distribution in both cases. Dimension-robust sampling of both distributions can be performed using a prior-reversible MCMC method, such as the preconditioned Crank-Nicolson (pCN) method \cite{Cotter2013}. The spatial version of probit, that is when $X$ is a function space rather than $\mathbb{R}^N$, is of interest to study further.}

\mi{Once we introduce the hierarchical length scale dependence, significant problems arise in terms of sampling the probit posterior in high dimensions, due to the issues associated with measure singularity discussed above. With the level set method it is possible to circumvent through the choice of prior and rescaling discussed in this section; a well-defined Metropolis-within-Gibbs sampling algorithm on function space is outlined in the next section.}

\section{MCMC Algorithm for Posterior Sampling}
\label{sec:M}

Having constructed the posterior distribution on $(u,\tau)|y$ we
are now faced with the task of sampling this probability distribution.
We will use the Metropolis-within-Gibbs formalism, as described in for 
example \cite{casella-robert}, section 10.3. This
algorithm constructs the Markov chain $(u^{(k)},\tau^{(k)})$ with
the structure
\begin{itemize}
\item $u^{(k+1)} \sim \bbK^{\tau^{(k)},y}(u^{(k)},\cdot)$,
\item $\tau^{(k+1)} \sim \bbL^{u^{(k+1)},y}(\tau^{(k)},\cdot)$,
\end{itemize} 
where $\bbK^{\tau,y}$ is a Metropolis-Hastings Markov kernel
reversible with respect to $u|(\tau,y)$ and
$\bbL^{u,y}$ is a Metropolis-Hastings Markov kernel
reversible with respect to $\tau|(u,y).$ 
The Metropolis-Hastings method is outlined in chapter 7 
of \cite{casella-robert}. See \cite{geirsson2015mcmc} for related blocking methodologies for
Gibbs samplers in the context of latent Gaussian models.

In defining the conditional distributions, and the
Metropolis methods to sample from them, a key design principle is
to ensure that all measures and algorithms are well-defined in 
the infinite-dimensional setting, so that the resulting algorithms 
are robust to mesh-refinement \cite{Cotter2013}.
This thinking has been behind the form of the prior and posterior
distributions developed in the previous section, as we now demonstrate.

In subsection \ref{ssec:pu} we define the kernel $\bbK^{\tau,y}$ 
and in subsection \ref{ssec:pt} we define the kernel $\bbL^{u,y}.$
Then in the final subsection \ref{ssec:alg} we put all these
building blocks together to specify the complete algorithm used.

\subsection{Proposal and Acceptance Probability for $u|(\tau,y)$}
\label{ssec:pu}

Samples from the distribution of $u|(\tau,y)$ can be produced using a 
pCN Metropolis Hastings method \cite{Cotter2013}, with proposal
and acceptance probability as follows:
\mi{\begin{enumerate}
\item Given $u$, propose 
\[
v = (1-\beta^2)^{1/2}u + \beta \xi,\;\;\;\xi\sim N(0,\mathcal{C}_\tau).
\]
\item Accept with probability 
\[
\alpha(u,v) = \min\big\{1,\exp\big(\Phi(u,\tau;y)-\Phi(v,\tau;y)\big)\big\}
\]
or else stay at $u$.
\end{enumerate}}

\subsection{Proposal and Acceptance Probability for $\tau|(y,u)$}
\label{ssec:pt}

Producing samples of $\tau |(u,y)$ is more involved, since we must first make sense of this conditional distribution. To do this, define the three measures 
$\eta_0$, $\nu_0$, and $\nu$ on $X\times\mathbb{R}^+\times Y$ by
\begin{align*}
\eta_0(\dee u, \dee \tau,\dee y) &= \mu_0^0(\dee u)\pi_0(\dee \tau)\mathbb{Q}_0(\dee y),\\
\nu_0(\dee u, \dee \tau,\dee y) &= \mu_0^\tau(\dee u)\pi_0(\dee \tau)\mathbb{Q}_0(\dee y),\\
\nu(\dee u,\dee \tau,\dee y) &= \mu_0^\tau(\dee u)\pi_0(\dee \tau)\mathbb{Q}_{u,\tau}(\dee y).
\end{align*}
Here $\mathbb{Q}_0 = N(0,\Gamma)$ is the distribution of the noise, and $\mathbb{Q}_{u,\tau}$ is as defined in (\ref{eq:qudensity}). Then we have the chain of 
absolute continuities $\nu \ll \nu_0 \ll \eta_0$, with
\begin{align*}
\frac{\dee \nu_0}{\dee \eta_0}(u,\tau,y) &= \frac{\dee \mu_0^\tau}{\dee \mu_0^0}(u) =: L(u,\tau),\\
\frac{\dee \nu}{\dee \nu_0}(u,\tau,y) &= \frac{\dee \mathbb{Q}_{u,\tau}}{\dee \mathbb{Q}_0}(y) = \exp\left(-\Phi(u,\tau;y)+\frac{1}{2}|y|_\Gamma^2\right),
\end{align*}
and so by the chain rule we have $\nu \ll \eta_0$ and
\[
\frac{\dee \nu}{\dee \eta_0}(u,\tau,y) = \frac{\dee \mathbb{Q}_{u,\tau}}{\dee \mathbb{Q}_0}(y)\cdot\frac{\dee \mu_0^\tau}{\dee \mu_0^0}(u) =: \varphi(u,\tau,y).
\]
We use the conditioning lemma, Theorem 3.1 in \cite{lecturenotes}, to prove the existence of the desired conditional distribution.

\begin{theorem}
Assume that \mmd{$\Phi:X\times \mathbb{R}^+\times Y\rightarrow\mathbb{R}$ is $\eta_0$ measurable and $\eta_0$-a.s. finite. Assume also that, 
for $(u,y)$ $\mu_0^0\times\mathbb{Q}_0$-a.s.},
\[
Z_\pi := \int_{\mathbb{R}^+} \exp\big(-\Phi(u,\tau;y)\big)L(u,\tau)\,\pi_0(\dee \tau) > 0.
\]
Then the regular
conditional distribution of $\tau |(u,y)$ exists under $\nu$, and is denoted by $\pi^{u,y}$. Furthermore, $\pi^{u,y} \ll \pi_0$ and, for $(u,y)$ $\nu$-a.s,
\[
\frac{\dee \pi^{u,y}}{\dee \pi_0}(\tau) = \frac{1}{Z_\pi} \exp\big(-\Phi(u,\tau;y)\big)L(u,\tau).
\]
\end{theorem}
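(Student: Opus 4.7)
The plan is to invoke the conditioning lemma (Theorem 3.1 of \cite{lecturenotes}) against the reference measure $\eta_0$. The crucial structural observation is that, under $\eta_0 = \mu_0^0(\dee u)\pi_0(\dee \tau)\mathbb{Q}_0(\dee y)$, the hierarchical parameter $\tau$ is independent of $(u,y)$ with marginal $\pi_0$; equivalently the regular conditional distribution of $\tau\mid(u,y)$ under $\eta_0$ is simply $\pi_0$. Since $\nu \ll \eta_0$ with an explicit density $\varphi$, the conditioning lemma then yields the conditional of $\tau\mid(u,y)$ under $\nu$ by reweighting $\pi_0$ by a normalized slice of $\varphi$.

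First I would collect the chain rule already assembled in the excerpt,
\[
\frac{\dee \nu}{\dee \eta_0}(u,\tau,y) = \varphi(u,\tau,y) = \exp\!\Big(-\Phi(u,\tau;y) + \tfrac{1}{2}|y|_\Gamma^2\Big)L(u,\tau),
\]
and note that $\varphi$ is jointly measurable because $\Phi$ is $\eta_0$-measurable by hypothesis and $L$ is measurable by construction. Applying the conditioning lemma then gives $\pi^{u,y} \ll \pi_0$ with
\[
\frac{\dee \pi^{u,y}}{\dee \pi_0}(\tau) = \frac{\varphi(u,\tau,y)}{\int_{\mathbb{R}^+}\varphi(u,\tau',y)\,\pi_0(\dee \tau')},
\]
valid at any $(u,y)$ for which the denominator is strictly positive and finite. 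The factor $\exp(\tfrac{1}{2}|y|_\Gamma^2)$ is independent of $\tau$ and hence cancels between numerator and denominator, producing exactly the stated density with normalization $Z_\pi(u,y)$.

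What remains is to verify the positivity and finiteness of $Z_\pi$ in the $\nu$-a.s. sense needed by the conditioning lemma. Positivity is handed to us as an assumption $\mu_0^0\otimes\mathbb{Q}_0$-a.s.; to transfer this to $\nu$-a.s. I would use that the $(u,y)$-marginal of $\nu$ is absolutely continuous with respect to $\mu_0^0\otimes\mathbb{Q}_0$, a consequence of the chain $\nu \ll \nu_0 \ll \eta_0$ already recorded in the excerpt. Finiteness $\mu_0^0\otimes\mathbb{Q}_0$-a.s. is essentially automatic: by Fubini applied to the probability measure $\nu = \varphi\cdot\eta_0$, the map $(u,y)\mapsto \int \varphi(u,\tau,y)\pi_0(\dee \tau)$ is the density of the $(u,y)$-marginal of $\nu$ with respect to $\mu_0^0\otimes\mathbb{Q}_0$, so it is finite almost surely. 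The only genuinely delicate point -- and the place I would expect the main (though mild) bookkeeping obstacle -- is precisely this translation between the various a.s. statements, i.e.\ making sure that the positivity hypothesis expressed under $\mu_0^0\otimes\mathbb{Q}_0$ is the right one to feed into the conditioning lemma, whose conclusion is phrased $\nu$-a.s.; the absolute continuity chain from subsection~\ref{ssec:pt} handles this cleanly.
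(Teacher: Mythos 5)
Your proposal is correct and follows essentially the same route as the paper's proof: both invoke the conditioning lemma (Theorem 3.1 of \cite{lecturenotes}) with reference measure $\eta_0$, observe that the conditional of $\tau\mid(u,y)$ under the product measure $\eta_0$ is just $\pi_0$, cancel the $\tau$-independent factor $\exp(\tfrac{1}{2}|y|_\Gamma^2)$, and transfer the positivity of the normalizing constant from $\mu_0^0\times\mathbb{Q}_0$-a.s.\ to $\nu$-a.s.\ via $\nu\ll\eta_0$. Your additional remark on finiteness via Fubini is a harmless (and correct) supplement that the paper leaves implicit.
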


\begin{proof}
The conditional random variable $\tau |(u,y)$ exists under $\eta_0$, and its distribution is just $\pi_0$ since $\eta_0$ is a product measure. Theorem 3.1 in \cite{lecturenotes} then tells us that the conditional random variable $\tau |(u,y)$ exists under $\nu$. We denote its distribution $\pi^{u,y}$. Define
\begin{align*}
c(u,y) &= \int_{\mathbb{R}^+}\varphi(u,\tau,y)\pi_0(\dee \tau)\\ 
&= \exp\left(\frac{1}{2}|y|_\Gamma^2\right)\int_{\mathbb{R}^+}\exp\big(-\Phi(u,\tau;y)\big)L(u,\tau)\pi_0(\dee \tau).
\end{align*}
Now since $\exp\big(\frac{1}{2}|y|_\Gamma^2\big) \in (0,\infty)$ $\mu_0^0\times\mathbb{Q}_0$-a.s., we deduce that $c(u,y) > 0$ $\mu_0^0\times\mathbb{Q}_0$-a.s. by the $\mu_0^0\times\mathbb{Q}_0$-a.s. positivity of $Z_\pi$. By the absolute continuity $\nu \ll \eta_0$, we deduce that $c(u,y) > 0$ $\nu$-a.s. Therefore, again by Theorem 3.1 in \cite{lecturenotes}, we have $\pi^{u,y}\ll\pi_0$ and, for $(u,y)$ $\nu$-a.s.,
\begin{align*}
\frac{\dee \pi^{u,y}}{\dee \pi_0}(\tau) &= \frac{1}{c(u,y)}\varphi(u,\tau,y)\\
&= \frac{1}{Z_\pi} \exp\big(-\Phi(u,\tau;y)\big)L(u,\tau).
\end{align*}\qed
\end{proof}

\begin{remark}
Above we have used $\mu_0^0$ as a reference measure, and the
function $L(u,\tau)$ enters our expression for the posterior. 
But any $\mu_0^\lambda$ will suffice since the entire family
of measures $\{\mu_0^\tau\}_{\tau \ge 0}$ are equivalent
to one another. A straightforward calculation with the 
chain rule gives
\begin{align*}
\frac{\dee \pi^{u,y}}{\dee \pi_0}(\tau) 
&= \frac{1}{Z_{\pi,\lambda}}\frac{\dee \mu_0^\tau}{\dee \mu_0^\lambda}(u)\exp\big(-\Phi(u,\tau;y)\big)\\
&:=\frac{1}{Z_{\pi,\lambda}}L_{\lambda}(u,\tau)\exp\big(-\Phi(u,\tau;y)\big).
\end{align*}
$\hfill\qed$
\end{remark}

We now wish to sample from $\pi^{u,y}$ using a Metropolis-Hastings algorithm. We assume from now on that $\pi_0$ admits a Lebesgue density, so that $\pi^{u,y}$ also admits a Lebesgue density. Abusing notation
and using $\pi^{u,y},\pi_0$ to denote Lebesgue densities as well as
the corresponding measures we have
\[
\pi^{u,y}(\tau) \propto \exp\big(-\Phi(u,\tau;y)\big)L(u,\tau)\pi_0(\tau).
\]
Take a proposal kernel $Q(\tau,\dee\gamma) = q(\tau,\gamma)\,\dee\gamma$. Define the two measures $\rho, \rho^T$ on $(\mathbb{R}\times\mathbb{R},\mathcal{B}(\mathbb{R})\otimes\mathcal{B}(\mathbb{R}))$ by
\begin{align*}
\rho(\dee \tau, \dee \gamma) &= \pi^{u,y}(\dee \tau)Q(\tau, \dee \gamma)\\
&\propto \exp\big(-\Phi(u,\tau;y)\big)L(u,\tau)\pi_0(\tau)q(\tau,\gamma)\,\dee\tau\dee\gamma,\\
\rho^T(\dee\tau,\dee\gamma) &= \mu(\dee\gamma,\dee\tau).
\end{align*}
Then under appropriate conditions on $\pi_0$ and $q$, these two measures are equivalent. Define $r(\tau,\gamma)$ to be the Radon-Nikodym derivative
\begin{align*}
r(\tau,\gamma) &:=\frac{\dee \rho^T}{\dee \rho}(\tau,\gamma)\\
&= \exp\big(\Phi(u,\tau;y)-\Phi(u,\gamma;y)\big)\cdot\frac{\dee \mu_0^\gamma}{\dee \mu_0^\tau}(u)\cdot \frac{\pi_0(\gamma)q(\gamma,\tau)}{\pi_0(\tau)q(\tau,\gamma)}.
\end{align*}
The general form of the Metropolis-Hastings algorithm, as for example given in \cite{tierney}, says that we produce samples from $\pi^{u,y}$ by iterating the follow two steps:
\begin{enumerate}
\item Given $\tau$, propose $\gamma \sim Q(\tau,\dee\gamma)$.
\item Accept with probability $\displaystyle \alpha(\tau,\gamma) = \min\big\{1, r(\tau,\gamma)\big\}$, or else stay at $\tau$.
\end{enumerate}
In order to implement this algorithm, we need an expression for the Radon-Nikodym derivative $\frac{\dee \mu_0^\gamma}{\dee \mu_0^\tau}(u)$. \mmd{Denote by $\{\lambda_j(\tau)\}_{j\geq 1}$ the eigenvalues of the covariance $\mathcal{C}_\tau$, and $\{\varphi_j\}_{j\geq 1}$ their corresponding eigenvectors. Note that because of the structure of the family $\{\mathcal{C}_\tau\}_{\tau\geq 0}$, the eigenvectors are independent of $\tau$.} Using Proposition \ref{prop:fhdensity}, we see that
{\fontsize{0.3cm}{0.5cm}
\mi{\begin{align}
\label{trueratio}
\frac{\dee \mu_0^\gamma}{\dee \mu_0^\tau}(u) &= \prod_{j=1}^\infty\frac{\lambda_j(\tau)^{1/2}}{\lambda_j(\gamma)^{1/2}}\times\exp\Bigg(\frac{1}{2}\sum_{j=1}^\infty\bigg(\frac{1}{\lambda_j(\tau)} - \frac{1}{\lambda_j(\gamma)}\bigg)\langle u,\varphi_j\rangle^2\Bigg)\\
&\notag= \exp\Bigg(\frac{1}{2}\Bigg[\sum_{j=1}^\infty\left(\frac{1}{\lambda_j(\tau)} - \frac{1}{\lambda_j(\gamma)}\right)\langle u,\varphi_j\rangle^2 + \log\left(\frac{\lambda_j(\tau)}{\lambda_j(\gamma)}\right)\Bigg]\Bigg).
\end{align}
}}

From Theorem \ref{t:wmequiv} we know that $\mu_0^\tau$ and $\mu_0^\gamma$ are equivalent, and so it must be the case that the expressions for the derivative above are almost-surely finite. However this is not immediately clear from inspection of the expression; thus we provide some intuition about why it is so in the following theorem. The proof is given in the Appendix.

\mi{\begin{theorem}
\label{t:exponent}
Assume that $u \sim N(0,\mathcal{C}_0)$. Then for each $\tau > 0$,
\begin{enumerate}[(i)]
\item {\small $\displaystyle\sum_{j=1}^\infty\left(\frac{1}{\lambda_j(\tau)} - \frac{1}{\lambda_j(0)}\right)\langle u,\varphi_j\rangle^2$} is almost-surely finite if and only if $d = 1$; and
\item {\small $\displaystyle\sum_{j=1}^\infty\left[\left(\frac{1}{\lambda_j(\tau)} - \frac{1}{\lambda_j(0)}\right)\langle u,\varphi_j\rangle^2 + \log\left(\frac{\lambda_j(\tau)}{\lambda_j(0)}\right)\right]$} is almost-surely finite if $d \leq 3$.
\end{enumerate}
\end{theorem}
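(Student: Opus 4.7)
The plan is to work in the common eigenbasis $\{\varphi_j\}_{j \geq 1}$ of the family $\{\mathcal{C}_\tau\}_{\tau\geq 0}$, so that both series become sums of independent scalar random variables whose a.s.\ convergence can be settled by Kolmogorov-type criteria. On $\mathbb{T}^d$ the eigenvalues $\mu_j$ of $-\Delta$ satisfy the Weyl asymptotic $\mu_j \asymp j^{2/d}$, so $\lambda_j(\tau) = (\tau^2+\mu_j)^{-\alpha}$; and writing $\langle u,\varphi_j\rangle = \lambda_j(0)^{1/2}\xi_j$ with $\{\xi_j\}$ i.i.d.\ $N(0,1)$, each summand becomes an explicit deterministic coefficient times a function of $\xi_j$ alone.

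For part (i), the $j$-th summand simplifies to $a_j\xi_j^2$ with
\[
a_j := \frac{\lambda_j(0)}{\lambda_j(\tau)} - 1 = (1+\tau^2/\mu_j)^\alpha - 1 \asymp \tau^2 \mu_j^{-1} \asymp j^{-2/d}.
\]
Because the terms are nonnegative and independent, almost sure convergence is equivalent to $\sum_j a_j < \infty$. The $(\Leftarrow)$ direction is monotone convergence. For $(\Rightarrow)$, since $a_j \to 0$ one has $\mathbb{E}[\min(a_j\xi_j^2,1)] \geq c\,a_j$ for $j$ large (with $c = \mathbb{E}[\xi_1^2\,\mathbf{1}_{\{\xi_1^2 < 1\}}] > 0$), so divergence of $\sum a_j$ forces divergence of the truncated-expectation series and hence a.s.\ divergence by Kolmogorov's three-series theorem. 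Since $\sum_j j^{-2/d} < \infty$ iff $d = 1$, this gives the claim.

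For part (ii), the crucial observation is that the $\log$ term exactly cancels the leading-order nonzero mean of $a_j\xi_j^2$. Setting $\epsilon_j := \tau^2/\mu_j$, the $j$-th summand reads
\[
T_j = \bigl((1+\epsilon_j)^\alpha - 1\bigr)\xi_j^2 - \alpha\log(1+\epsilon_j).
\]
A second-order Taylor expansion around $\epsilon_j = 0$ yields $T_j = \alpha\epsilon_j(\xi_j^2-1) + R_j$ with a remainder satisfying $|R_j| \leq C\epsilon_j^2(1+\xi_j^2)$ uniformly in $j$. Hence $\mathbb{E}[T_j] = \mathcal{O}(\epsilon_j^2)$ and $\operatorname{Var}(T_j) = 2\alpha^2\epsilon_j^2 + \mathcal{O}(\epsilon_j^4)$, both on the order of $j^{-4/d}$. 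These sums converge iff $d < 4$, i.e.\ $d \leq 3$, at which point Kolmogorov's two-series theorem applied to $\{T_j\}$ delivers a.s.\ convergence of $\sum_j T_j$.

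The main technical obstacle is the bookkeeping surrounding the Taylor expansion in part (ii): one must establish the uniform remainder bounds $|(1+\epsilon)^\alpha - 1 - \alpha\epsilon| \leq C\epsilon^2$ and $|\log(1+\epsilon) - \epsilon| \leq C\epsilon^2$ on a neighborhood of $0$, and separately absorb the finitely many indices for which $\epsilon_j$ is not small, which contribute only a finite additive constant to any of the relevant series. All other ingredients, the Weyl asymptotics $\mu_j \asymp j^{2/d}$, the diagonalization of $u$ in the eigenbasis, and the independence of the coordinates $\xi_j$, are standard and import no additional difficulty.
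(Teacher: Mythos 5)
Your proof is correct and, for the most part, follows the same route as the paper: diagonalize in the common eigenbasis so that $\langle u,\varphi_j\rangle^2 \stackrel{d}{=} \lambda_j(0)\xi_j^2$ with $\xi_j$ i.i.d.\ standard normal, use the Weyl asymptotic $\mu_j\asymp j^{2/d}$ to get coefficients of order $j^{-2/d}$, prove the ``if'' half of (i) by taking expectations of the nonnegative series, and prove (ii) by Taylor-expanding the logarithm so that it cancels the mean of the quadratic term, leaving a centered series handled by Kolmogorov's two-series theorem plus a deterministic remainder of order $j^{-4/d}$, summable exactly when $d\le 3$. The one place you genuinely diverge is the ``only if'' half of (i): the paper splits $\sum a_j\xi_j^2$ into the divergent deterministic series $\sum a_j$ plus the centered fluctuation $\sum a_j(\xi_j^2-1)$ and argues the latter converges a.s.\ by the two-series theorem --- an argument whose variance condition holds only for $d\le3$, so as written it does not literally cover $d\ge4$. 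Your route via the three-series criterion for nonnegative independent summands (a.s.\ finiteness iff $\sum_j\mathbb{E}[\min(a_j\xi_j^2,1)]<\infty$, and $\mathbb{E}[\min(a_j\xi_j^2,1)]\asymp a_j$ once $a_j\le1$) treats all $d\ge2$ uniformly and is in that respect cleaner; combined with the zero--one dichotomy for sums of independent nonnegative variables it gives a.s.\ divergence directly. Both approaches buy the same conclusion for the dimensions that matter in the paper, but yours closes the small gap at $d\ge4$ without extra work.
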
}

A consequence of part (i) of this result is that in dimensions $2$ and $3$, both the product and the sum in (\ref{trueratio}) diverge, 
despite the whole expression being finite.
This means that care is required when numerically implementing 
the Gibbs update of $\tau.$

\subsection{The Algorithm}
\label{ssec:alg}
Putting the theory above together, we can write down a Metropolis-within-Gibbs algorithm for sampling the posterior distribution. Recall that we assumed the proposal kernel $Q$ admitted a Lebesgue density $q$: $Q(\tau,\dee \gamma) = q(\tau,\gamma)\dee\gamma$.

Let $\{\lambda_j(\tau),\varphi_j\}_{j \geq 1}$ denote the eigenbasis associated with $\mathcal{C}_\tau$. Define
\mi{\begin{align*}
w(\tau,\gamma) &= \exp\Bigg(\frac{1}{2}\sum_{j=1}^\infty\bigg[\left(\frac{1}{\lambda_j(\tau)} - \frac{1}{\lambda_j(\gamma)}\right)\langle u,\varphi_j\rangle^2+ \log\left(\frac{\lambda_j(\tau)}{\lambda_j(\gamma)}\right)\bigg]\Bigg)
\end{align*}}and set
\begin{align*}
\alpha^\tau(u,v) &= \min\Big\{1,\exp\big(\Phi(u,\tau;y) - \Phi(v,\tau;y)\big)\Big\},\\
\alpha^u(\tau,\gamma) &= \min\bigg\{1,\exp\big(\Phi(u,\tau;y) - \Phi(u,\gamma;y)\big)\cdot w(\tau,\gamma)\cdot \frac{\pi_0(\tau)q(\tau,\gamma)}{\pi_0(\gamma)q(\gamma,\tau)}\bigg\}.
\end{align*}
Fix jump parameter $\beta \in (0,1]$, and generate $\{u^{(k)},\tau^{(k)}\}_{k\geq 0}$ as follows:

\begin{algorithm}[ht]
\begin{singlespace}
\caption{Metropolis-within-Gibbs}
\end{singlespace}
\begin{enumerate}
\item Set $k = 0$ and pick initial state $(u^{(0)},\tau^{(0)}) \in X \times \mathbb{R}^+$.
\item \mi{Propose $v^{(k)} = (1-\beta^2)^{1/2}u^{(k)} + \beta\xi^{(k)}$, where $\xi^{(k)} \sim N(0,\mathcal{C}_\tau)$.}
\item Set $u^{(k+1)} = v^{(k)}$ with probability $\alpha^{\tau^{(k)}}(u^{(k)},v^{(k)})$, or else set $u^{(k+1)} = u^{(k)}$.
\item Propose $\gamma^{(k)} \sim Q(\tau^{(k)},\cdot)$.
\item Set $\tau^{(k+1)} = \gamma^{(k)}$ with probability $\alpha^{u^{(k+1)}}(\tau^{(k)},\gamma^{(k)})$, or else set $\tau^{(k+1)} = \tau^{(k)}$.
\item $k\rightarrow k+1$ and return to 2.
\end{enumerate}
\end{algorithm}

Then $\{u^{(k)},\tau^{(k)}\}_{k\geq 0}$ is a Markov chain which is
invariant with respect to $\mu^y(du,d\tau)$.

\section{Numerical Results}
\label{sec:N}
We perform a variety of numerical experiments to illustrate the performance of the hierarchical algorithm described in section \ref{sec:M}. We focus on three different forward models. The first is pointwise observations composed with the identity -- the simplicity of this model allows us to probe the behavior 
of the algorithm at low computational cost, 
and such models are also of interest in
applications such as image reconstruction -- see for example \cite{alvarez1994formalization,sapiro2006geometric} and the references therein.
The other two, groundwater flow and EIT, are physical models which have previously been studied extensively, including study of non-hierarchical Bayesian level set methods \cite{levelset,eit}. A review of studies on inverse problems associated with EIT is given in \cite{borcea}.

\mi{The code used for simulations is available on GitHub at \url{https://github.com/mattdunlop/bayes-hier/releases/v1.0}}.

\mi{\subsection{Discretization of the problem}
\label{ssec:disc}
There are two spaces that we must discretize in order to implement the algorithm. The first is the state space, where the samples will be generated, and the second is the function space associated with the evaluation of the forward model. We briefly outline how this is done.}

\mi{Our discretization for the state space relies on the Karhunen-Lo\'eve expansion of the prior. Suppose we wish to produce samples from a Gaussian measure $N(0,\mathcal{C})$, where $\mathcal{C}$ has associated eigenbasis $\{\lambda_j,\varphi_j\}_{j\in\mathbb{N}}$. Then a sample $u$ from this distribution may be represented as
\[
u(x) = \sum_{j=1}^\infty \sqrt{\lambda_j}\xi_j\varphi_j(x),\;\;\;\xi_j\sim N(0,1)\text{ i.i.d.}
\]
We discretize the space by truncating and approximating this basis, so that elements of the space are represented as
\[
u^N(x) = \sum_{j=1}^N u_j^N\varphi_j^N(x).
\]
The inference is then performed on the random variables $\{u_j^N\}_{j=1}^N$.
Additionally, in the cases we consider, the eigenvectors associated with all covariances are given by the Fourier basis and so we may use the Fast Fourier Transform for efficient implementation.}

\mi{The second discretization occurs in the solution of the differential equations. In the EIT example a finite element discretization is used, in which the functions are approximated by expansion in a finite basis. The coefficients of the expansion of the solution to the PDE in this basis are then solved for numerically. The basis is chosen such that each basis element is locally supported -- this ensures that matrices arising in the implementation of the method are sparse.
}

\mi{The groundwater flow model uses a finite difference discretization, in which derivatives are approximated by difference quotients. For example, given a uniform grid $\{x_i,y_j\}_{i,j=1}^N$ with spacing $x_{i+1}-x_i = \delta$, we may approximate
\[
\frac{\partial h}{\partial x}(x_i,y_j) \approx \frac{h(x_i + \delta,y_j) - h(x_i - \delta,y_j)}{2\delta}.
\] 
This leads to an approximate solution to the PDE defined on the grid $\{x_i,y_j\}_{i,j=1}^N$.
}

\mi{Finite element, finite difference and even spectral methods outlined above can all be used for any PDE examples; what we use for illustrative purposes in this paper (EIT with finite element and groundwater flow with finite difference) are just examples of numerous possible forward models and discretization combinations.}

\subsection{Identity Map}
\label{ssec:id}

The first inverse problem is based on reconstruction of a piecewise constant
field from noisy pointwise observations. 

\subsubsection{The forward model}
\label{sssec:id_fwd}
Let $D = [0,1]^2$ and define a grid of observation points \mmd{$\{q_j\}_{j=1}^J\subseteq D$}. Let $Z = L^p(D)$ for some $1\leq p < \infty$ and let $Y = \mathbb{R}^J$. The forward operator $S:Z\rightarrow Y$ is defined by
\mmd{\[
S(\kappa) = (\kappa(q_1),\ldots,\kappa(q_J)).
\]
}We are then interested in finding \mmd{$\kappa$}, given the prior information that it
is piecewise constant, and taking a number of known prescribed values.
Let $\mathcal{G} = S\circ F:X\times\mathbb{R}^+\rightarrow Y$.
We reconstruct $(u,\tau)$ and hence \mmd{$\kappa=F(u,\tau)$.}
The map $S$ is not continuous, and so Assumptions \ref{ass:forward} do not hold. However Proposition \ref{prop:point_obs_cts} \mmd{in the Appendix} shows that the map $\mathcal{G}$ is uniformly bounded, and almost-surely continuous under the priors considered. From this the conclusions of Proposition \ref{p:app} \mmd{in the Appendix} follow, and it is possible
to deduce the conclusions of Theorem \ref{t:2}.

\subsubsection{Simulations and results}
\label{sssec:id_results}
We study the effect of different length scales, for both hierarchical
and non-hierarchical methods, demonstrating the advantages of the former
over the latter. To this end
we define $\tau_i^\dagger = 5i$, $i=1,\ldots,10$, and generate $10$ 
different true level set fields $u_i^\dagger \sim \mu_0^{\tau_i^\dagger}$ on a mesh of $2^{10}\times 2^{10}$ points. This leads to 10 sets of data $y_i$, given by
\[
y_i = \mathcal{G}(u_i^\dagger,\tau_i^\dagger) + \eta_i,\;\;\;\eta_i \sim N(0,\Gamma)\text{ i.i.d.}
\]
where we take the noise covariance \mmd{$\Gamma = 0.2^2\cdot I$ to be white}. The level set map $F$ is defined such that there are $3$ phases, taking the constant values 
$1, 3$ and $5.$ The mean relative error on the generated data sets ranges 
from $6\%$ to $9\%.$

One of the motivations for developing a hierarchical method is that little knowledge may be known a priori about the length scale associated with the unknown \mmd{geometric} field. We therefore sample from each hierarchical posterior distribution associated with each $y_i$ using a variety of initial values for the length scale parameter. This allows us to check that, computationally, we can recover a good approximation to the true length scale even if our initial guess is poor. 
Specifically, for each set of data we run 10 hierarchical MCMC simulations started at the different values of $\tau = \tau_k^\dagger$, giving a total of 100 hierarchical MCMC chains. For all chains we place a relatively flat prior of $N(20,10^2)$ on $\tau$. \mmd{On the prior for the level set function $u$ we take Neumann boundary conditions and fix the smoothness parameter $\alpha = 5$. The thresholding levels in the level set map are chosen such that there is an order one amount of prior mass in all levels -- specifically we take $c_1 = -0.1$ and $c_2 = 0.1$.}

We also wish to compare how the hierarchical method compares with the non-hierarchical method. We therefore look at the 10 different posterior distributions that arise from each set of data $y_i$ when using each of 10 fixed prior inverse length scales $\tau_k^\dagger$, which gives another 100 MCMC chains.
 
We perform all sampling on a mesh of $2^7\times 2^7$ points to avoid an inverse crime, \mmd{discretizing via the discrete Fourier transform (DFT) and retaining all $2^{14}$ modes}. The observation grid \mmd{$\{q_j\}_{j=1}^{100}$} is taken to be a uniformly spaced grid of 100 points. \mmd{We use a Gaussian random walk proposal distribution for the length scale parameter. We make this choice as it is the canonical starting point for MCMC, and it works in this case. It is possible however that something more sophisticated may be beneficial}. We produce $5\times 10^6$ samples for each chain, and discard the first $10^6$ samples as burn-in when calculating quantities of interest.

In Figure \ref{fig:id_taumean} we look at the recovery of the true value of $\tau$ with the hierarchical method. For large enough $\tau_0$, the mean of $\tau$ after the burn-in period is roughly constant with respect to varying the initialization point, for each posterior. This makes sense from a theoretical point of view since these means arise from the same posterior distribution, for a fixed truth, but it is also reassuring from a computational point of view since the output is close to independent of the initial guess for the length scale. There does however appear to be an issue with initializing the value of $\tau$ at too low a value, with the value $\tau$ tending to get stuck far from the truth when initialized at $=5$. This effect has been detected in several other experiments and models -- initializing the value of $\tau$ much lower than the true inverse length can cause the parameter to become stuck in a local minimum. Such an effect has not been observed however when the parameter is initialized significantly larger than the true value. Table \ref{tab:id_taumean} shows that recovery of the true value of $\tau$ is very good for $\tau^\dagger \leq 35$, though becomes slightly worse for larger values of $\tau^\dagger$.  The means here are calculated without the $\tau_0 = 5$ sample means since they are clearly outliers for most of the posteriors. One possible explanation for the lack of recovery in the cases $\tau^\dagger = 40$, $45$ and $50$ is to do with the structure of the observation map $S$. The observation grid has a length scale associated with it, related to distances between observation points, and so issues could arise when trying to detect the length scale of the \mmd{geometric} field that is significantly shorter than this. Additionally, the length scales $1/\tau$ are closer for larger $\tau$ and so it may be more difficult to distinguish between particular values.

For brevity we now focus on the case where $\tau^\dagger = 15$. The traces of the values of $\tau$ along the hierarchical chains corresponding to this truth is shown in Figure \ref{fig:id_tautrace}. After approximately $10^6$ samples, all chains have become centered around the true length scale. This convergence appears to be roughly linear for each chain. 

Figure \ref{fig:id_tau15} shows the push forwards of the sample means from the different chains under the level set map, that is, approximations of $F(\mathbb{E}(u),\mathbb{E}(\tau))$.  This figure also shows approximations of $\mathbb{E}(F(u,\tau))$ and typical samples of $F(u,\tau)$ coming from the different chains. We see that these conditional means for the hierarchical method appear to agree with one other. This is reassuring for the reason mentioned above -- they are all estimates of the mean of the same distribution. The figures for the non-hierarchical posteriors admit greater variation, especially near the boundary for higher values of $\tau$. Moreover, not all inclusions are detected when the length scale parameter is taken to be $\tau = 5$. Note that the mean from the hierarchical posterior agrees closely with that from the non-hierarchical posterior using the fixed true length-scale $\tau = 15$. Additionally, even though the means are reasonable approximations to the truth in most cases, the typical samples are much worse when using the non-hierarchical method with an incorrect length scale parameter.

We can also consider the sample variance of the pushforward of the samples by the level set map, i.e. approximations of the quantity $\text{Var}(F(u,\tau))$. In Figure \ref{fig:id_var_tau15} we show this quantity for both the hierarchical and non-hierarchical priors. Note that for the non-hierarchical priors, the variance increases both at the boundary and away from the observation points for larger values of $\tau$. Variance is also higher along the interfaces and within the central phase, since points in these locations are more likely to switch between all three phases. The hierarchical approximations all appear to agree. Whilst the hierarchical means are very similar to the non-hierarchical means using the true length scale, as seen in Figure \ref{fig:id_tau15}, the hierarchical variances are smaller away from the observation points.

Additionally, we look at the level set function $u$ itself in Figure \ref{fig:id_tau15_u}. In these plots we rescale the level set function by $\tau^{\alpha-d/2} = \tau^4$ so that they are all of approximately the same amplitude. The means for both the hierarchical and non-hierarchical methods are again quite similar to one another, though the difference between the typical samples is much more stark.

Finally, in Figure \ref{fig:id_densities_tau15}, we look at the joint densities of the inverse length scale parameter $\tau$ and first five Karhunen-Lo\`{e}ve (KL) modes of the level set function $u$.\footnote{KL modes are the eigenfunctions of the covariance operator, here ordered by decreasing eigenvalue.} Non-trivial correlations are evident between $\tau$ and each of these modes, with the support of the densities appearing non-convex. This is likely related to the non-linear scaling between the length-scale and the amplitude of the level-set function under the prior. Conversely the KL modes, whilst still correlated with one-another other, have simpler joint densities.
Note, also, that the posterior on the length scale is centered close to the
true value of the inverse length scale parameter $\tau.$ 

\mmd{
\begin{remark}
In this section we studied the ability to recover the true length scale parameter $\tau^\dagger$, given a finite number of direct noisy observations of the geometric field. The question arises of how the quality of this recovery depends upon the spatial resolution of the data. As would be expected, learning this parameter becomes more difficult when this resolution is poor due to the lack of information in the data. However it is interesting to note that, even in the limit of an infinite number of distinct observation points, it is unlikely that we would be able to identify $\tau^\dagger$ perfectly. This is suggested by a result of Zhang \cite{zhang2004inconsistent} which states that, in the context of generalized linear mixed models, the marginal variance and length-scale parameters of a Mat\'ern field cannot be consistently estimated in this limit \mi{where as in our case the domain is fixed. This is in contrast to the case of additional data points increasing the domain, where consistent estimation is possible \cite{mardia_marshall}.} $\hfill\qed$
\end{remark}
}

\begin{figure}
\begin{center}
\includegraphics[width=0.7\linewidth]{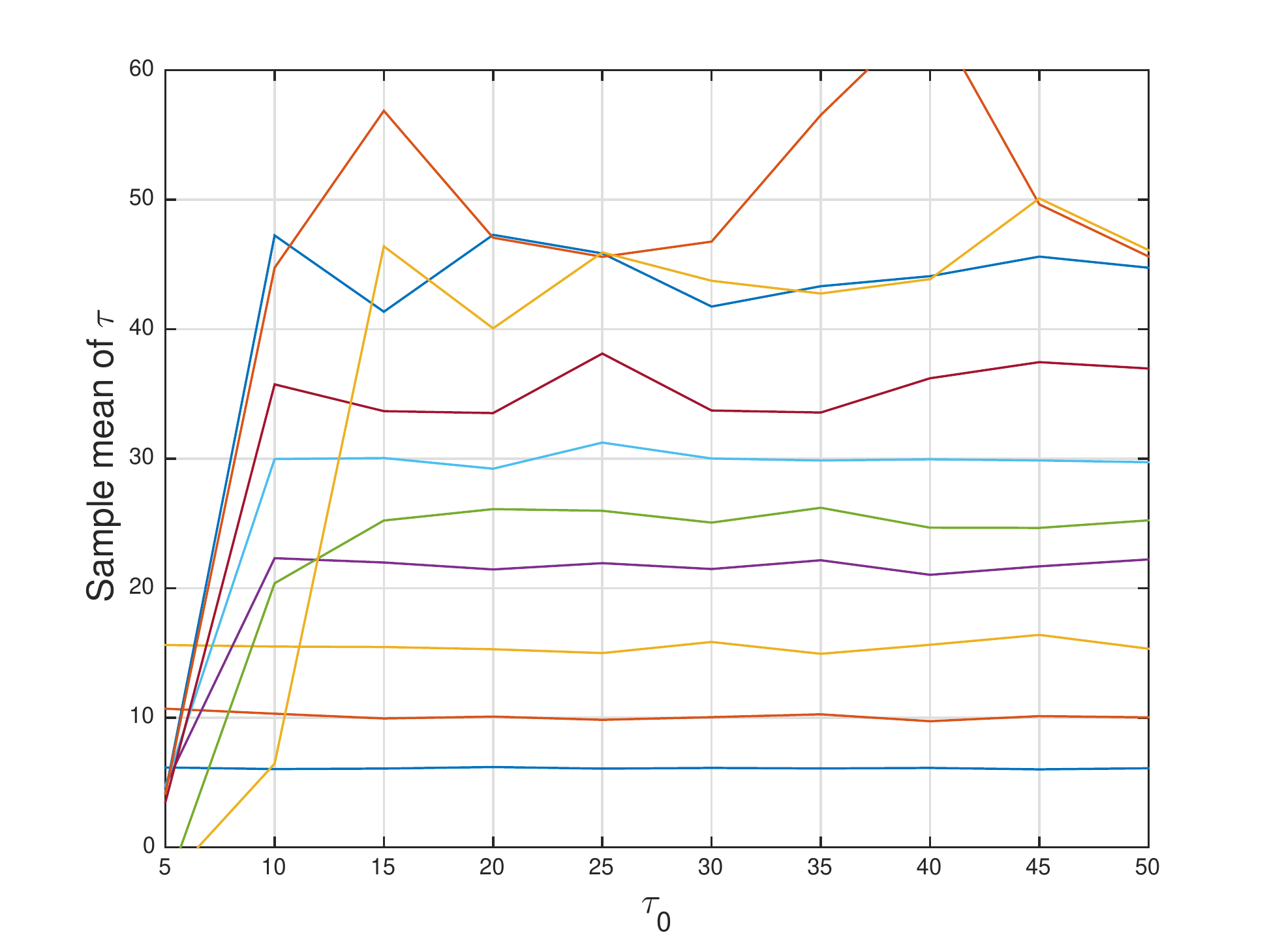}
\caption{(Identity model) The sample mean of $\tau$ along each hierarchical MCMC chain, against the initial value of $\tau$. The different curves arise from using different data $y_i$.}
\label{fig:id_taumean}
\end{center}
\end{figure}

\begin{table}
\caption{(Identity model) The value of $\tau$ used to create the data $y_i$, and the mean value of $\tau$ across the MCMC chains and the different initial values of $\tau$.}

\label{tab:id_taumean}
\begin{tabularx}{0.7\linewidth}{XX}
\hline\noalign{\smallskip}
$\tau^\dagger$ & Mean sample mean of $\tau$ \\
\noalign{\smallskip}\hline\noalign{\smallskip}
5 & 6.10\\
10 & 10.0\\
15 & 15.5\\
20 & 21.8\\
25 & 24.8\\
30 & 30.0\\
35 & 35.4\\
40 & 44.6\\
45 & 50.8\\
50 & 40.6\\
\noalign{\smallskip}\hline
\end{tabularx}
\end{table}

\begin{figure}
\begin{center}
\includegraphics[width=0.7\linewidth,trim=0cm 0cm 0cm 0cm]{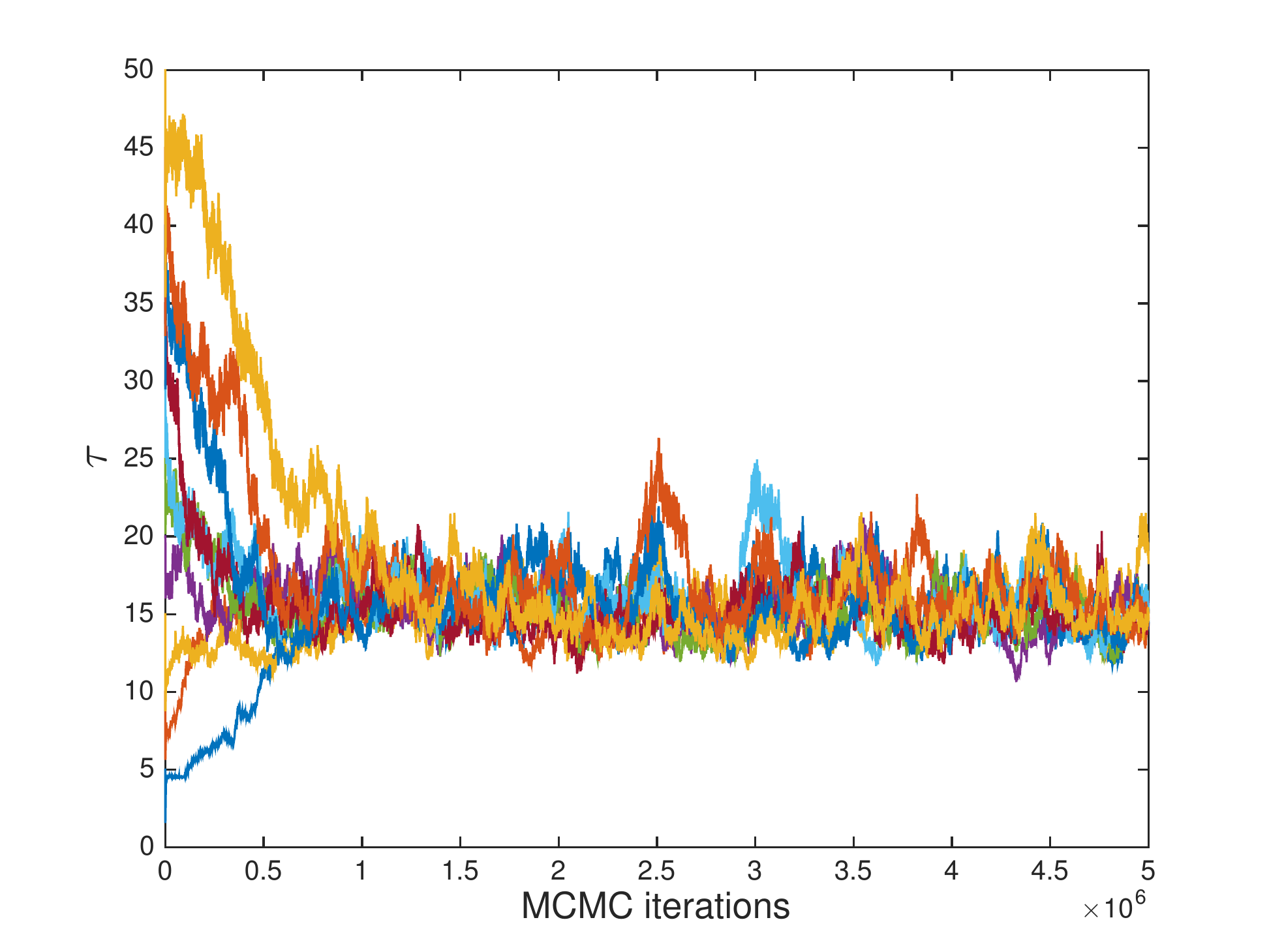}
\caption{(Identity model) The trace of $\tau$ along the MCMC chain, when initialized at the 10 different initial values. True inverse length scale is $\tau = 15$.}
\label{fig:id_tautrace}
\end{center}
\end{figure}

\begin{figure*}
\begin{center}
\subfloat[\mmd{The true geometric field used to generate the data $y$, with true inverse length scale $\tau = 15$}]{\hspace{3.5cm}\includegraphics[width=0.35\textwidth]{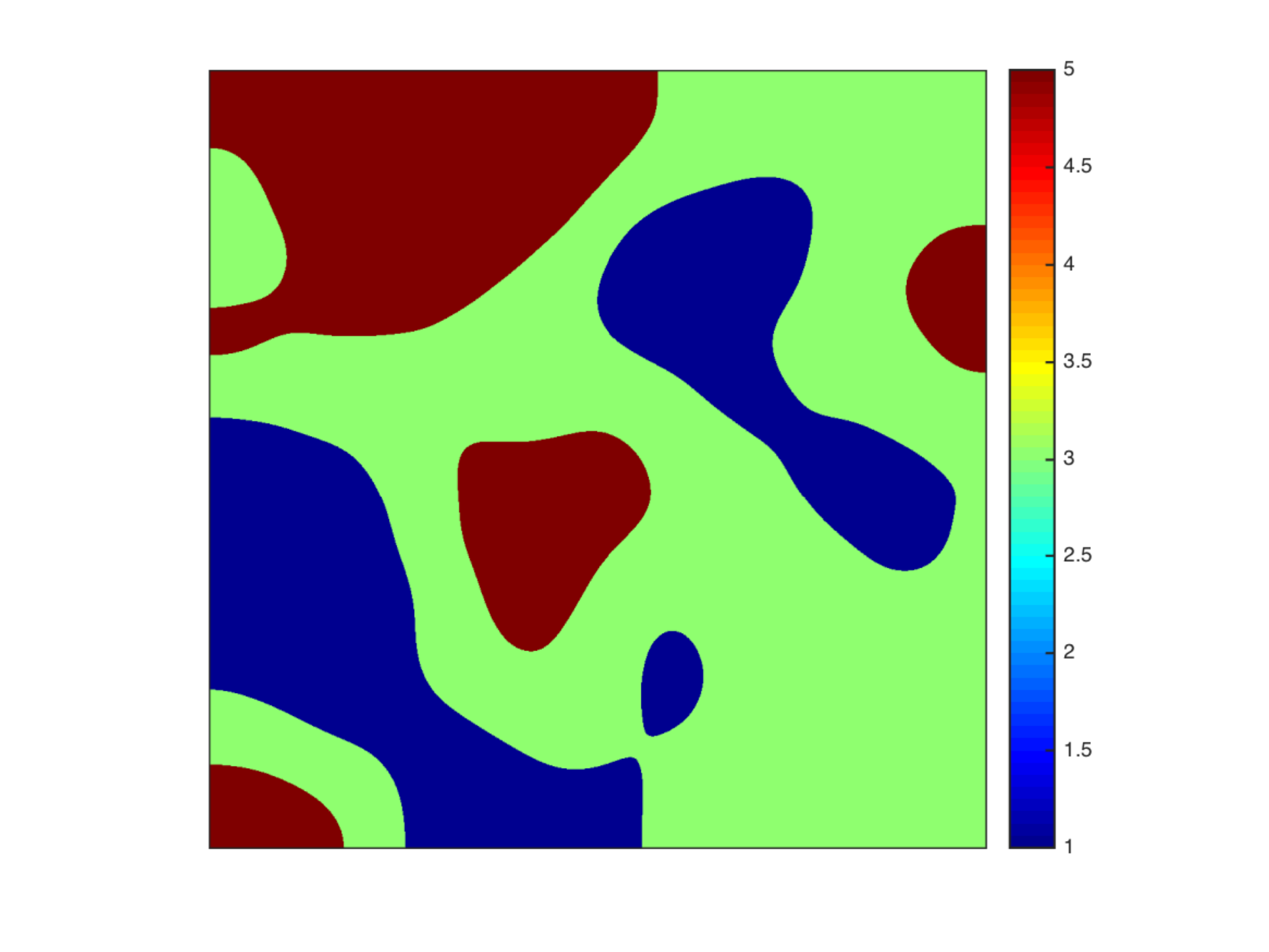}\includegraphics[width=0.28\textwidth,trim=0cm 0cm 0cm 4cm,clip]{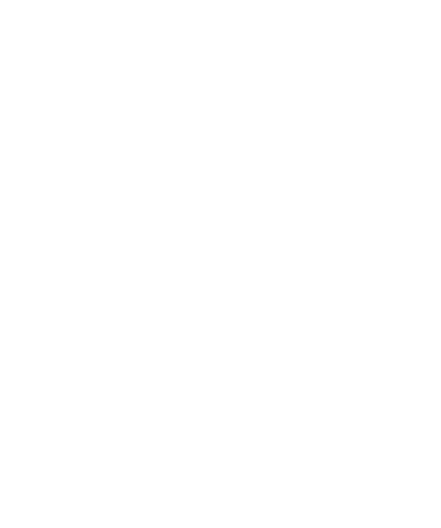}}\\
\subfloat[\mmd{(Top) Representative samples of $F(u,\tau)$ under the hierarchical posterior. (Middle) Approximations of $F(\mathbb{E}(u),\mathbb{E}(\tau))$. (Bottom) Approximations of $\mathbb{E}(F(u,\tau))$. From left-to-right, $\tau$ is initialized at $\tau = 5,15,25,35,45$.}]{\includegraphics[width=0.9\linewidth]{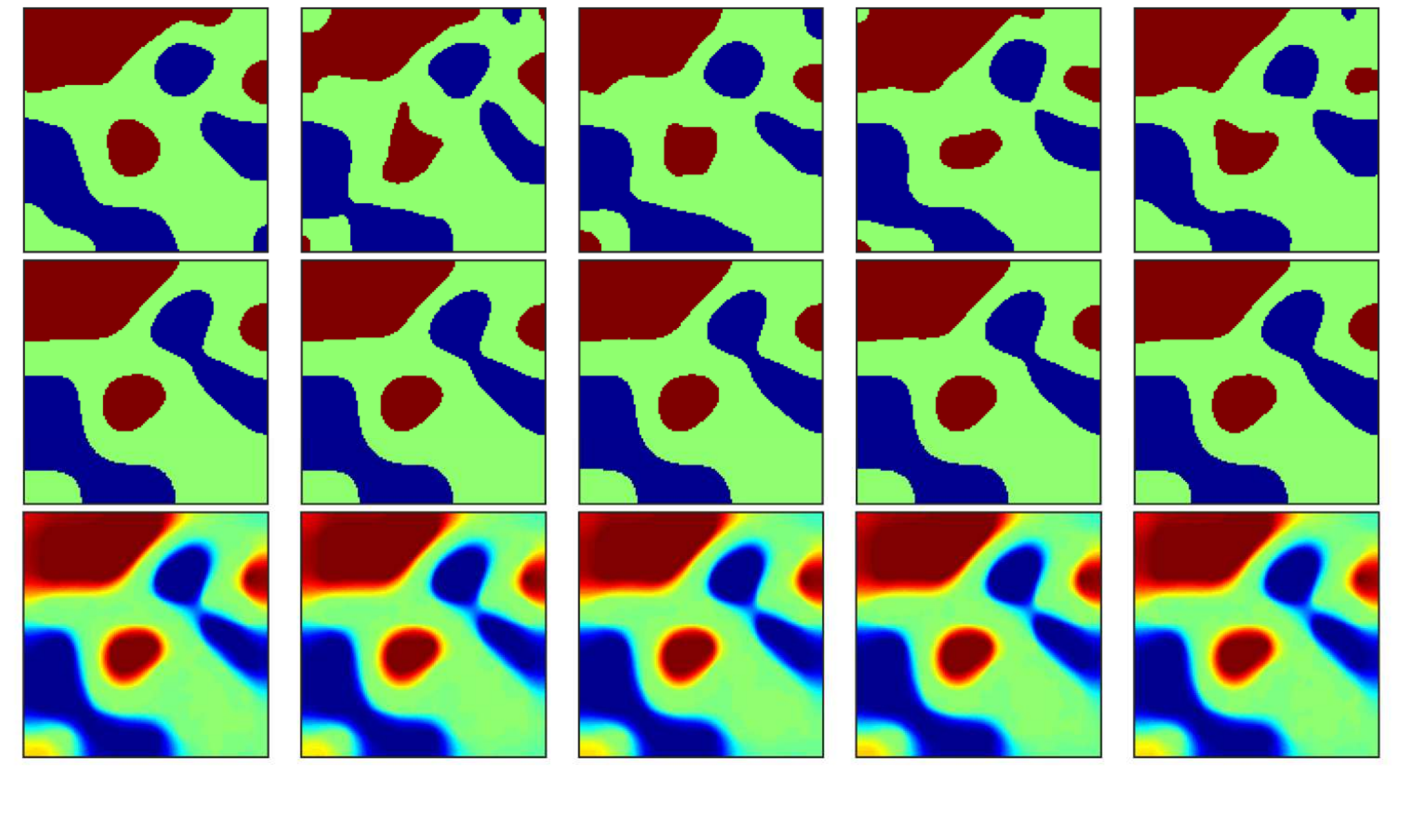}}\\
\subfloat[\mmd{As in (b), using the non-hierarchical method. From left-to-right, $\tau$ is fixed at $\tau = 5,15,25,35,45$.}]{\includegraphics[width=0.9\linewidth]{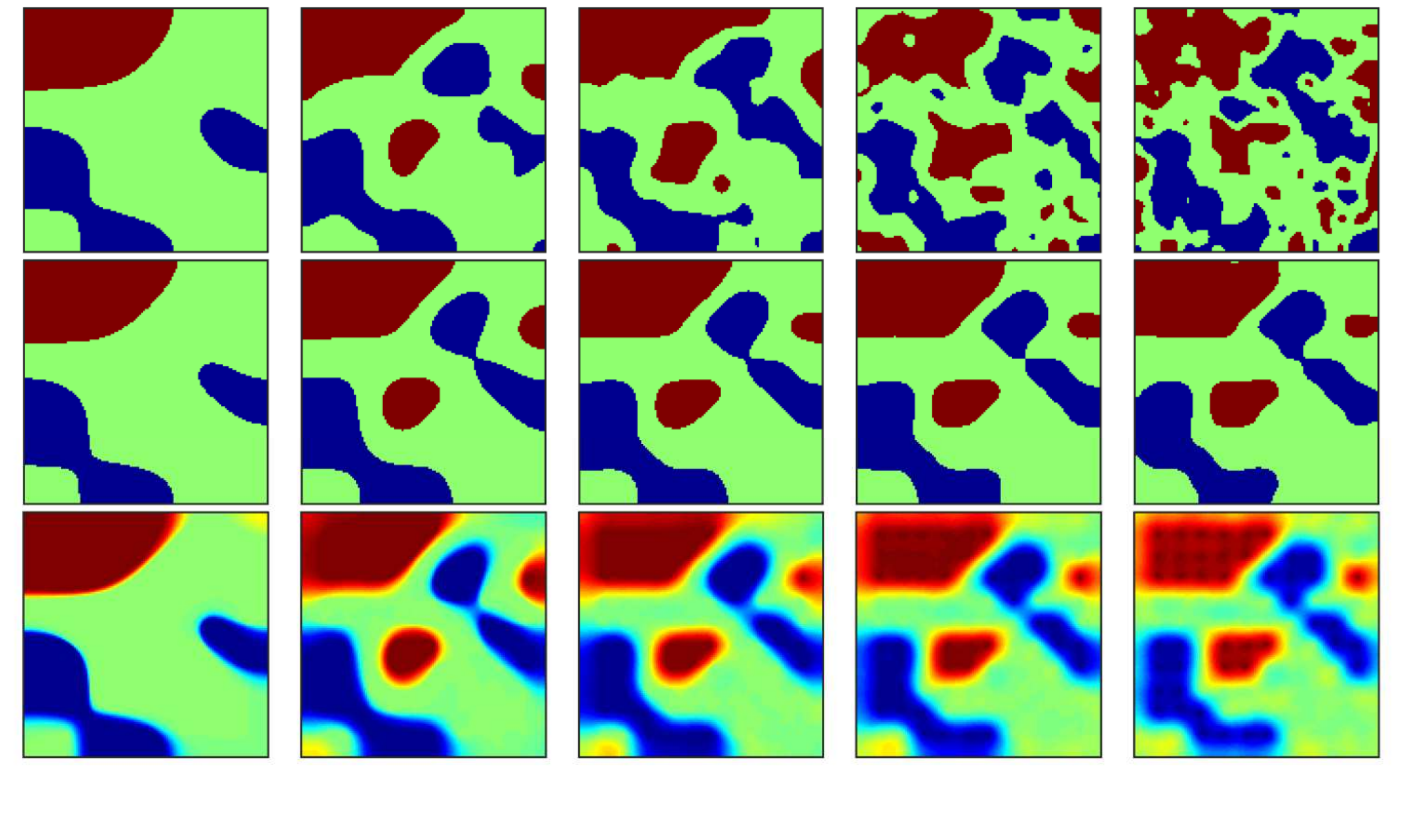}}
\caption{\mmd{Simulations for the identity model.}}
\label{fig:id_tau15}
\end{center}
\end{figure*}

\begin{figure*}
\begin{center}
\includegraphics[width=0.9\linewidth, trim=0cm 0cm 2cm 0cm]{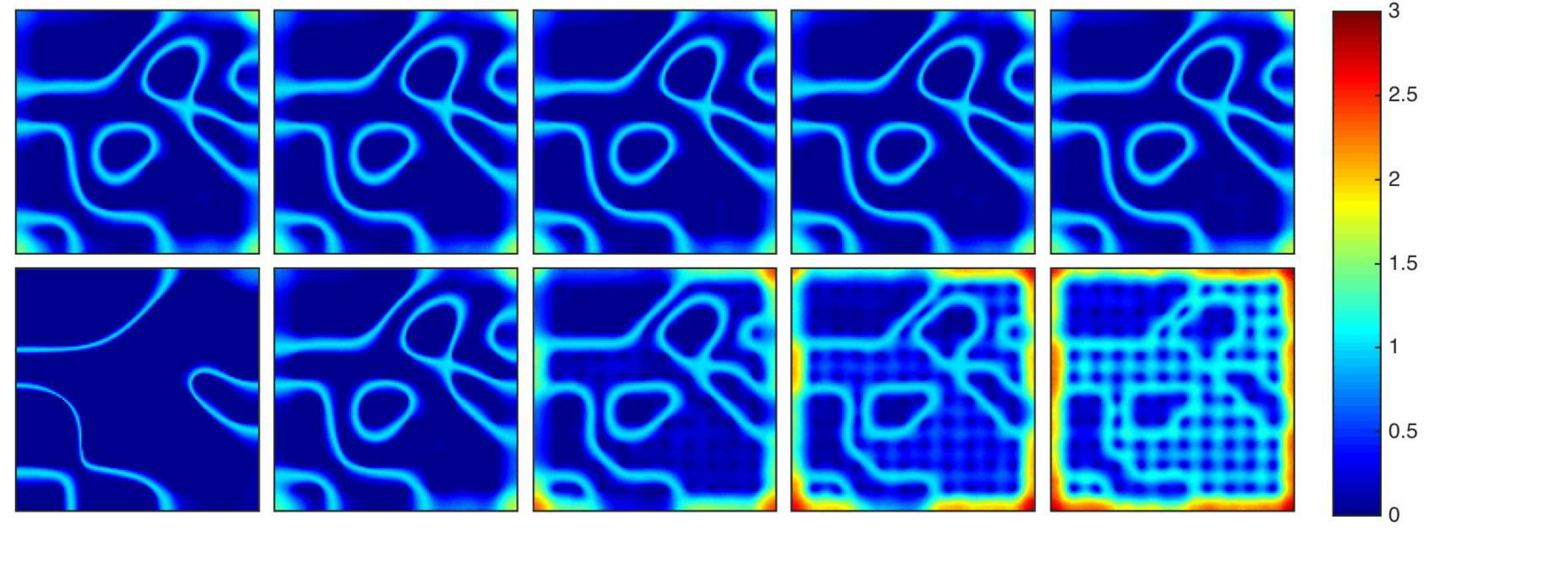}
\caption{(Identity model) Approximations of $\text{Var}(F(u,\tau))$ using the hierarchical (top) and fixed (bottom) priors, initialized or fixed at $\tau = 5,15,25,35,45$, from left-to-right. True inverse length scale is $\tau = 15$.}
\label{fig:id_var_tau15}
\end{center}
\end{figure*}

\begin{figure*}
\begin{center}
\includegraphics[width=0.85\linewidth, trim=0cm 0cm 2cm 0cm]{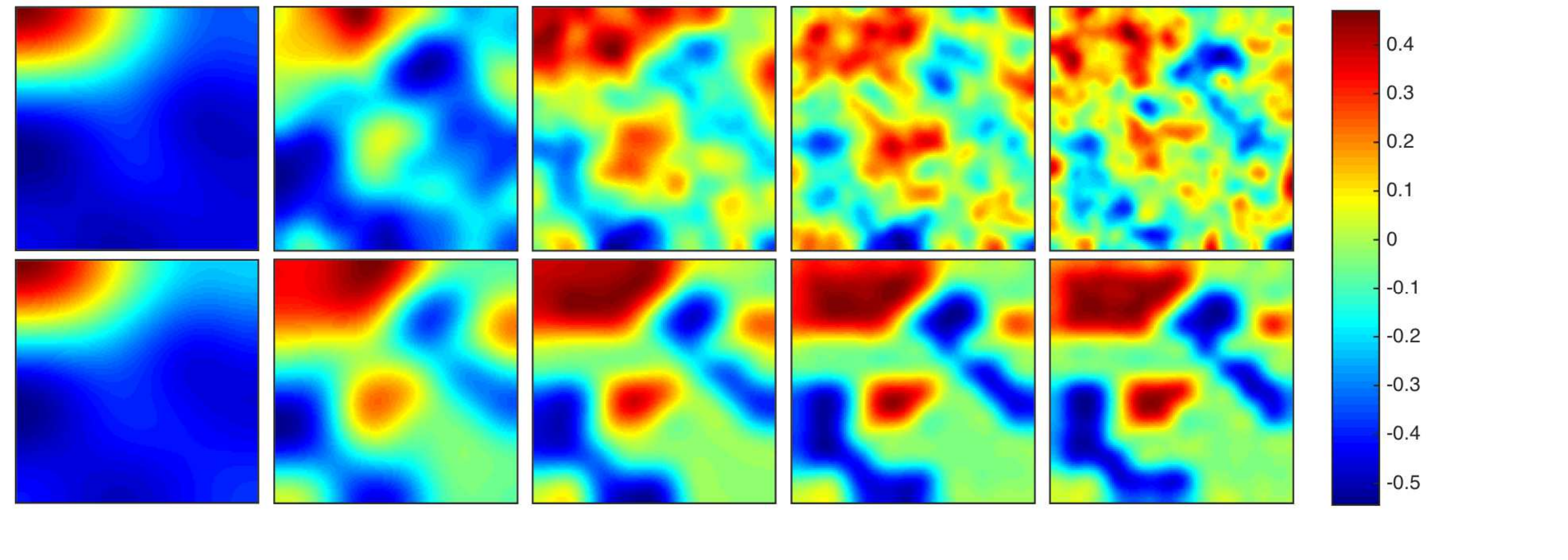}\includegraphics[width=0.248\linewidth, trim=22.2cm 0cm 2cm 0cm,clip]{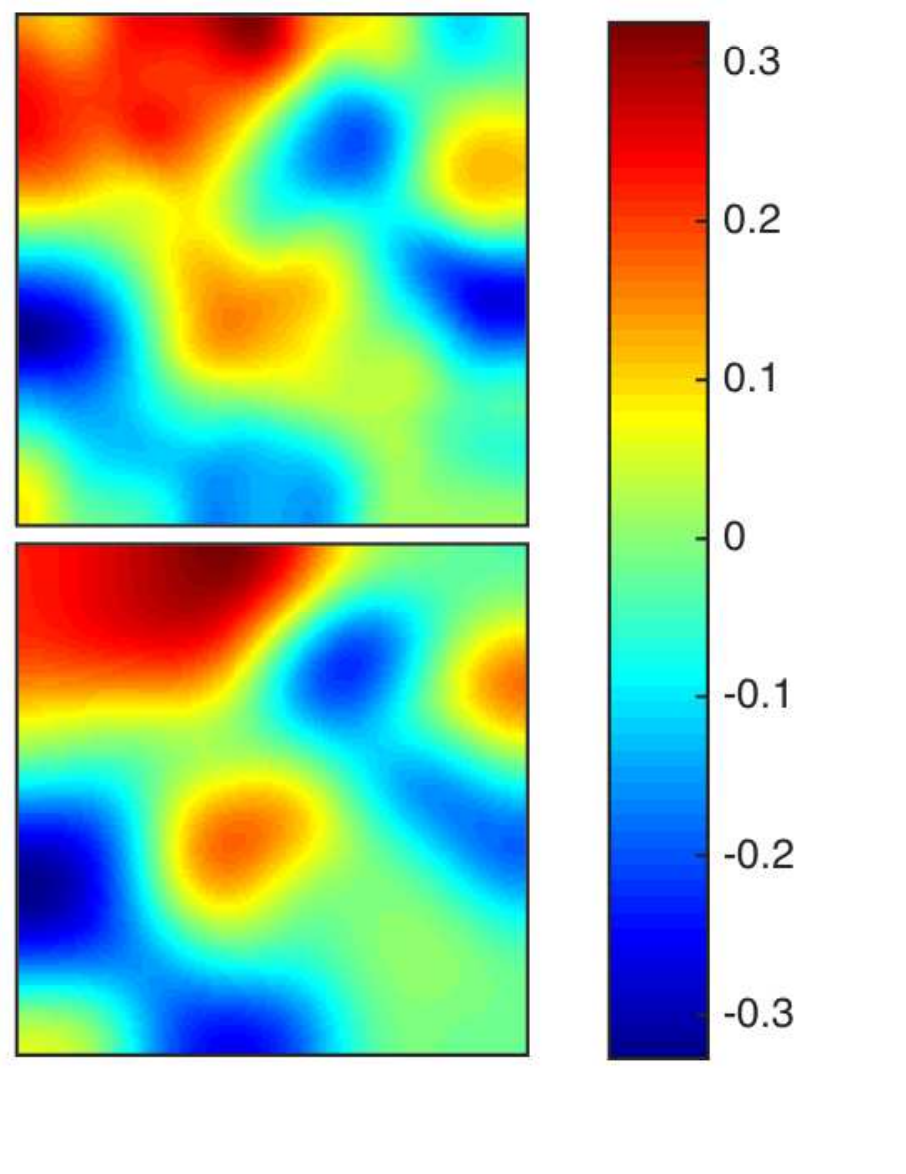}
\caption{\mi{(Identity model) Representative samples $\tau^4\cdot u$ (top) and sample means $\mathbb{E}(\tau^4\cdot u)$ (bottom) of the level set function. The rescaling $\tau^4$ means that the above quantities have the same approximate amplitude. True inverse length scale is $\tau = 15$.
(Left) Using the non-hierarchical method; from left-to-right $\tau$ is fixed at $\tau = 5,15,25,35,45$.
(Right) Corresponding quantities for the hierarchical method.}}
\label{fig:id_tau15_u}
\end{center}
\end{figure*}

\begin{figure*}
\begin{center}
\includegraphics[width=\linewidth]{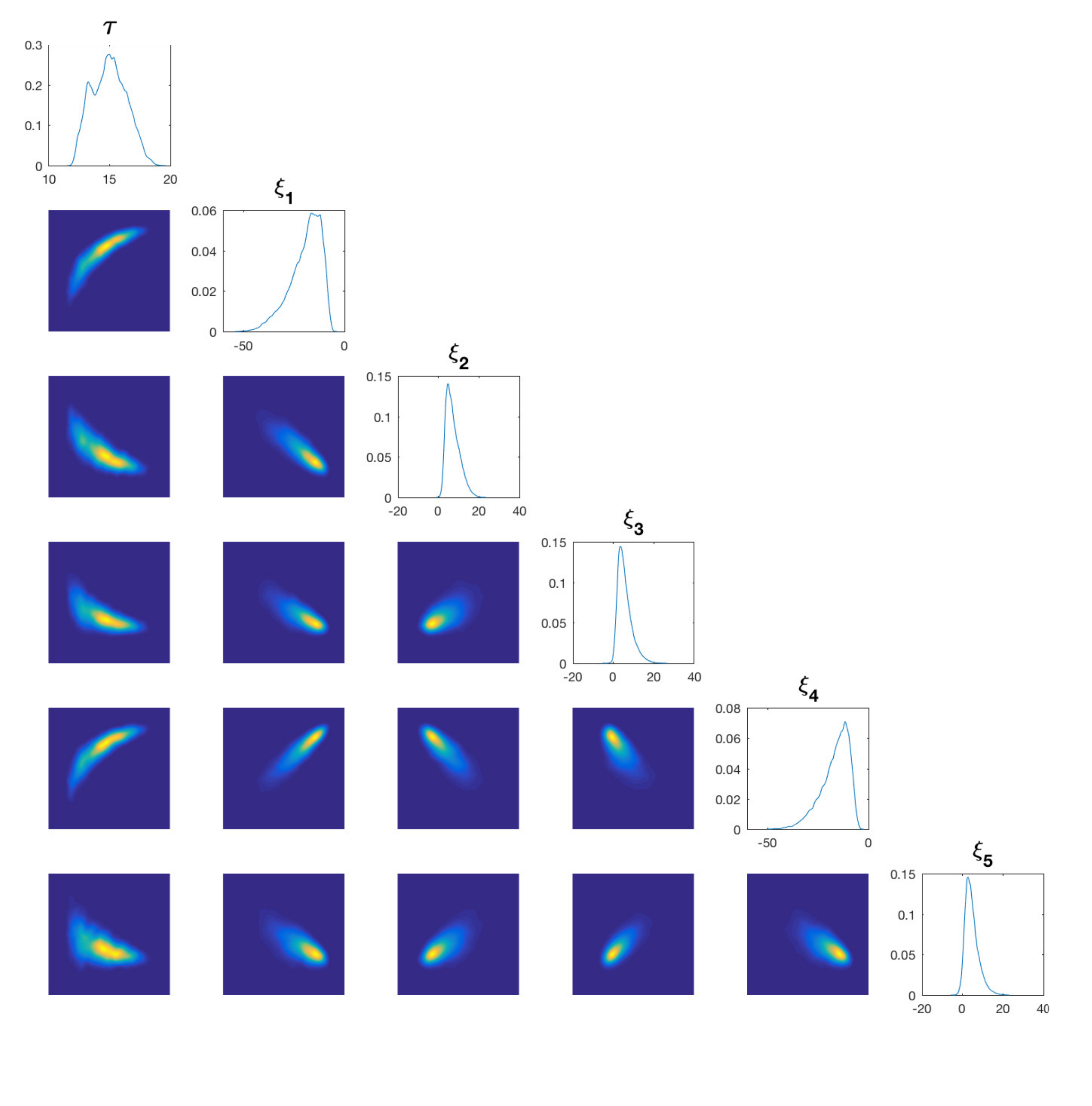}
\caption{(Identity model) (diagonal) Empirical densities of $\tau$ and the first five KL modes of $u$. (off-diagonal) Empirical joint densities. True inverse length scale is $\tau = 15$.}
\label{fig:id_densities_tau15}
\end{center}
\end{figure*}

\subsection{Identification of Geologic Facies in Groundwater Flow}
\label{ssec:gf}
The identification of geologic facies in subsurface flow applications is a common example of a large scale inverse problem that involves the recovery of unknown interfaces. In the case of groundwater flow, for example, the inverse problem concerns the recovery of the interface between regions with different hydraulic conductivity given measurements of hydraulic head. Geometric inverse problems of this type have recently received a lot of attention by the research community \cite{Xie2011,Ping2014,Lorentzen2012,Lorentzen2012_2}. Indeed, it has been recognized that the geometry determined by the aforementioned interfaces constitutes one of the main sources of uncertainty that must be quantified and reduced by means of Bayesian inversion.

In the context of groundwater flow, the identification of interfaces between regions associated with different types of geological properties can be posed as the recovery of a piecewise constant conductivity field parameterized with a level set function. A fully Bayesian level set framework for the solution of the aforementioned type of inverse problems has been recently developed in \cite{levelset}. The MCMC method applied in \cite{levelset} performs well when the prior of the level set function properly encodes the intrinsic length-scales of the unknown interfaces. Clearly, in practical applications such length-scales are most likely unknown and their incorrect specification may result in inaccurate and uncertain estimates of the unknown interfaces. The purpose of this section is to show that the proposed hierarchical Bayesian framework enables us to determine an optimal length-scale in the prior of the level set function which, in turn, captures more accurately the intrinsic length-scale of the unknown interface.

\subsubsection{The forward model}
\label{sssec:gwf_fwd}

We are interested in the identification of a piecewise constant hydraulic conductivity, denoted by $\kappa$, of a two-dimensional  confined aquifer whose physical domain is $D=[0,6]\times [0,6]$. We assume single-phase steady-state Darcy flow. The piezometric head, denoted by $h(x)$ ($x\in D$), which describes the flow within the aquifer can be modeled by the solution of \cite{Bear}
\begin{eqnarray}\label{eq:a1}
-\nabla\cdot \kappa \nabla h&=f &\qquad\textrm{in}~~D
\end{eqnarray}
where $f$ represents sources/sinks and where boundary conditions need to be specified. For the present work we consider the setup from the Benchmark used in \cite{Carrera,Hanke,Repre,enkf,reg_enkf,levelset}. In concrete, we assume that $f$ is a recharge term of the form
\begin{eqnarray}\label{eq:a2}
f(x_{1},x_{2})=\left\{\begin{array}{ccc}
0 &\textrm{if}& 0< x_{2}\leq 4,\\
137&\textrm{if}& 4< x_{2}< 5,\\
274&\textrm{if}& 5\leq x_{2} < 6.\end{array}\right.
\end{eqnarray}
and we consider the following boundary conditions
\begin{equation}
\begin{aligned}
\label{eq:a3}
 h(x_1,0)=100, \qquad \frac{\partial h}{\partial x_1}(6,x_2)=0,\\
 -\kappa\frac{\partial h}{\partial x_1}(0,x_2)=500,\qquad   \frac{\partial h}{\partial x_2}(x_1,6)=0.
\end{aligned}
\end{equation}

We consider the inverse problem of recovering $\kappa$ from observations $\{\ell_j(h)\}_{j=1}^{64}$ of $h$ given by (\ref{eq:a1})-(\ref{eq:a3}). We assume we have smoothed point observations given by
\[
\ell_j(h) = \int_D \frac{1}{2\pi\eps^2}e^{-\frac{1}{2\eps^2}(x-q_j)^2}h(x)\,\dee x
\]
where $\eps > 0$ and \mmd{$\{q_j\}_{j=1}^{64}\subseteq D$} is a grid of 64 observation points equally distributed on $D$. Let $Z = L^p(D)$ for some $1\leq p < \infty$ and $Y = \mathbb{R}^{64}$. \mmd{Given $\kappa \in Z$, let $h$ be given by (\ref{eq:a1})-(\ref{eq:a3}).} Then the forward map $S:Z\rightarrow Y$ is given by
\[
\kappa\mapsto (\ell_1(h),\ldots,\ell_{64}(h)).
\]
We assume that each $\kappa_i$ in the definition of the level set map $F$ is strictly positive. The image of $F$ is contained in the set of bounded fields on $D$ bounded below by $\min_i\kappa_i > 0$. In \cite{levelset} the map $S$ is shown to be continuous and uniformly bounded on such fields, with respect to $\|\cdot\|_{L^p(D)}$ for some $p$, and so Assumptions \ref{ass:forward} hold. As a consequence
Theorem \ref{t:2} applies directly.

\subsubsection{Simulations and results}
\label{sssec:gwf_results}

In the previous example we illustrate, with a simple model, the capabilities of the proposed framework to recover a specified true length-scale and a true level set function that defines a true discontinuous field from which synthetic data are generated. However, we must reiterate that, in practice, we wish to recover the true discontinuous field; the level set function is merely an artifact that we use for the parameterization of such a field. In practical applications the aim of the proposed hierarchical Bayesian level set framework is to infer a length-scale alongside with a level set function which, by means of expression (\ref{lvlsetmap}), produces a discontinuous field that captures the desired piecewise constant
field as accurately as possible and, in particular, the intrinsic length-scale 
separation of the interfaces determined by the discontinuities of the true \mmd{geometric} field. Therefore, in order to test our methodology in the applied setting of groundwater flow, rather than a true level set function, in this subsection we consider the true hydraulic conductivity $\kappa^{\dagger}$ whose logarithm is displayed in Figure \ref{fig:gw_0}\mmd{(a)}. This $\kappa^\dagger$ is defined such that it takes the constant values $e^{1.5}$, $e^{4}$ and $e^{6.5}$. This is channelized conductivity typical of fluvial environments and often used as Benchmarks for subsurface flow inversion \cite{Ping2014,Lorentzen2012,Xie2011,levelset}. Note that the values that the conductivity can take on the three different regions differ by at least one order of magnitude,
due to the logarithmic transformation. While there is indeed an intrinsic length-scale in the channelized structure, this true conductivity field does not come from a specified level set prior.

Synthetic data are generated by means of 
\[
y= (\ell_1(h^{\dagger}),\ldots,\ell_{64}(h^{\dagger}))+ \eta,\;\;\;\eta \sim N(0,\Gamma)\text{ i.i.d.}
\]
where $h^{\dagger}$ is the solution to (\ref{eq:a1})-(\ref{eq:a3}) for $\kappa=\kappa^{\dagger}$. Equations (\ref{eq:a1})-(\ref{eq:a3}) have been solved with cell-centered finite differences \cite{Wheeler}. In order to avoid inverse crimes, synthetic data are generated on a grid finer ($160\times 160$ cells) than the one used for the inversion ($80\times 80$ cells). \mmd{The discretization is performed via the DFT, and we retain all modes.} In addition, $\Gamma$ is a diagonal matrix given by $\Gamma_{i,i}= 0.0175\ell_i(h^{\dagger})$. In other words, we add noise that corresponds to $1.75\%$ of the size of the noise-free observations. \mmd{On the prior for the level set function $u$ we take Neumann boundary conditions and fix the smoothness parameter $\alpha = 5$.}

We consider a Gaussian prior $N(35,10^2)$ for $\tau$, \mmd{and use a Gaussian random walk proposal distribution for this parameter}. We then apply the hierarchical MCMC method from subsection \ref{ssec:alg} initialized with the following six different choices of $\tau=1,10,30,50,70,90$ and a sample of the prior (with that given $\tau$) of the level set function $u$. We thus produce six MCMC chains of length $4\times 10^6$ and discard the first $10^6$ as burn-in for the computation of quantities of interest. The trace plots of $\tau$ are displayed in Figure \ref{fig:gw_1} from which we clearly observe that all chains, regardless of their initial point, seem to stabilize and produce samples around $\tau=18$. In the \mmd{top row of Figure \ref{fig:gw_0}(b)} we display the logarithm of some representatives samples of $F(u,\tau)$ under the hierarchical posterior.  The \mmd{middle row of Figure \ref{fig:gw_0}(b)} shows the logarithm of $F(\mathbb{E}(u),\mathbb{E}(\tau))$, i.e., the pushforward of the posterior means obtained using the hierarchical method. The \mmd{bottom row of Figure \ref{fig:gw_0}(b)} displays the logarithm of the approximations of $\mathbb{E}(F(u,\tau))$. That is, the expected value of the pushforward samples under the posterior. The aforementioned results corresponds to five MCMC chains with $\tau$ initialized $\tau=10,30,50,70,90$ (the results for $\tau=1$ have been omitted). Similarly, Figure \ref{fig:gw_3} (top) shows the approximations of the variance of the pushforward samples of the posterior, i.e. $\text{Var}\big(F(u,\tau)\big)$. Clearly, both $\mathbb{E}(F(u,\tau))$ and $F(\mathbb{E}(u),\mathbb{E}(\tau))$ result in fields that provide a reasonable approximation of the true \mmd{geometric} field. Note that, as expected, the largest uncertainty in the distribution of the pushforward samples is around the interface between the regions with different conductivity. In \mmd{Figure \ref{fig:gw_4}(a) we show some representative samples of $u$ (top) and approximations to $\mathbb{E}(u)$ (bottom)}. In these plots, as before, we rescale the level set function by $\tau^{\alpha-d/2} = \tau^4$ so that they are all of approximately the same amplitude. In Figure \ref{fig:gw_5} we display the empirical densities of $\tau$ and the first five KL modes of $u$. A key observation
is that, although the true hydraulic conductivity is not generated
by thresholding a Gaussian random field, and hence there is no
``true'' length scale, the posterior nonetheless settles on a narrow
range of values of $\tau$ which are consistent with the data.

From the aforementioned results we can also clearly see that the hierarchical MCMC algorithm produces similar outcomes regardless of the initialization of the inverse of the length-scale $\tau$, reflecting ergodicity of the Markov chain.
The results from $\tau=1$ are not shown but they are very similar to the ones from other chains. As with the results from the previous subsection, the similarity in outcomes between all six chains is not surprising as these are aimed at sampling from the same posterior distribution; but the fact that this posterior distribution on $\tau$ concentrate\mmd{s} near to
a single value is of particular interest because it shows that the true \mmd{geometric}
field has an intrinsic length-scale, even though it was no\mmd{t} constructed via the
map $F(u,\tau).$ Furthermore, this similarity of outcomes between chains showcases the main advantage of the proposed framework with respect to the non-hierarchical one. Indeed, as stated earlier, the proposed method has the ability to recover a distribution for the intrinsic length-scale which gives rise to reasonably accurate estimates (i.e. $F(\mathbb{E}(u),\mathbb{E}(\tau))$ and  $\mathbb{E}(F(u,\tau))$) of the true
\mmd{geometric} field. We now present the numerical results from applying a non-hierarchical MCMC algorithm in which the inverse of length-scale $\tau$ is fixed. We consider again six MCMC chains as before with the (now fixed) values of $\tau=1,10,30,50,70,90$ that we used to initialized the hierarchical chains used before.  Analogous results to the ones presented for the hierarchical method can be found in the bottom panels of Figure \ref{fig:gw_0} as well as the bottom of Figures 
\ref{fig:gw_3} and \ref{fig:gw_4}. Clearly, the lack of properly prescribing the intrinsic length-scale in the non-hierarchical method results in inaccurate estimates of the true \mmd{geometric} field. We clearly observe that for $\tau\ge 30$ the estimates of the truth given by $F(\mathbb{E}(u),\mathbb{E}(\tau))$ and  $\mathbb{E}(F(u,\tau))$ are substantially inaccurate and the uncertainty measured by $\text{Var}\big(F(u,\tau)\big)$ is large. The non-hierarchical MCMC for $\tau=1$ did not converge; the results are not shown. The non-hierarchical MCMC only provides reasonable estimates for $\tau=10$ and $\tau=30$. However, we can visually appreciate that these results are still suboptimal when compared to the results from the hierarchical framework.

\begin{figure}
\begin{center}
\includegraphics[width=0.7\linewidth]{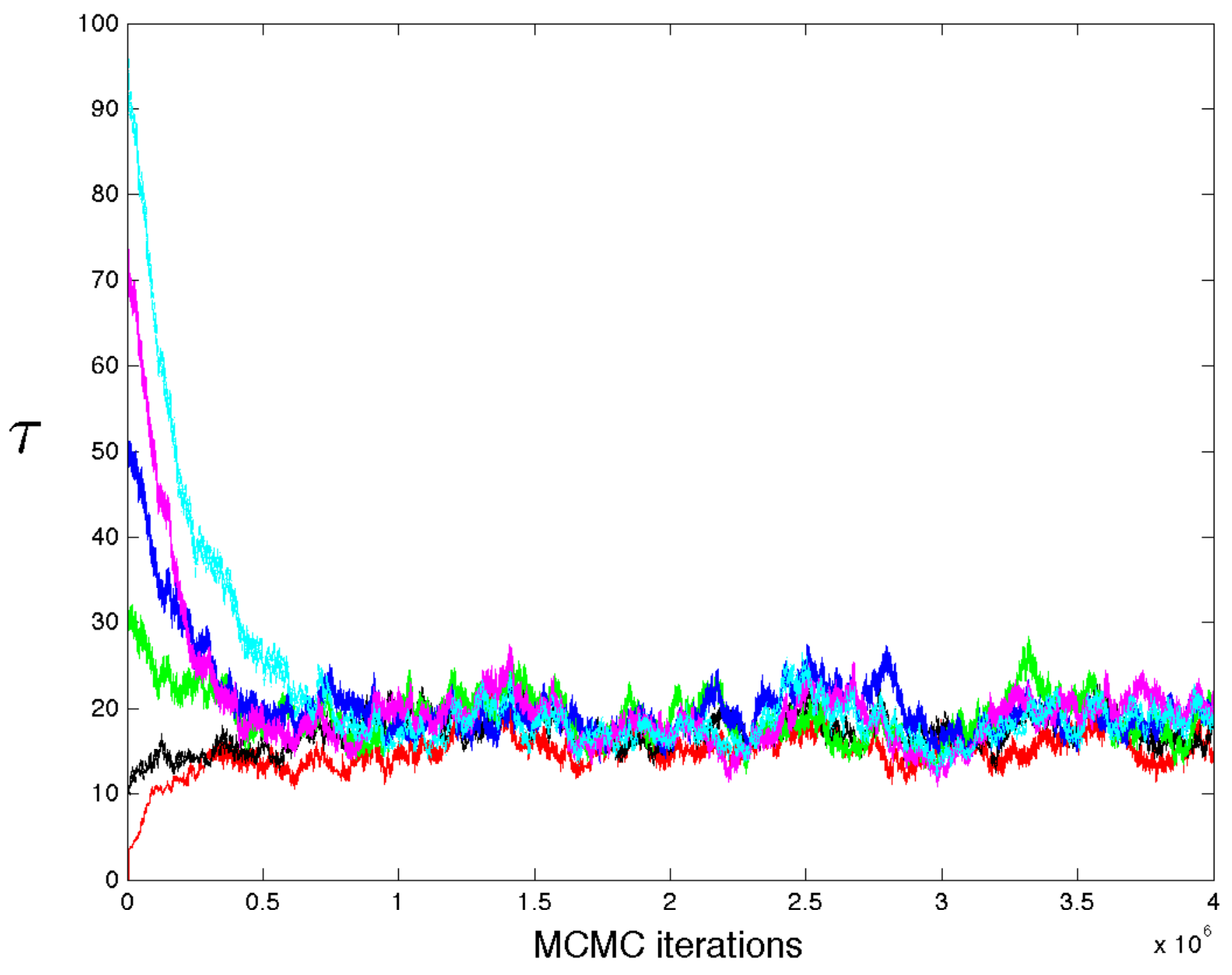}
\caption{(Groundwater flow model) Trace plots of $\tau$ obtained from six hierarchical MCMC chains.}
\label{fig:gw_1}

\end{center}
\end{figure}

\begin{figure*}
\begin{center}
\begingroup
\captionsetup[subfigure]{width=0.9\textwidth}
\subfloat[\mmd{(Left) Logarithm of the true hydraulic conductivity field used to generate the data $y$. (Right) True pressure field, and the grid of observation points.}]{\includegraphics[width=0.22\linewidth]{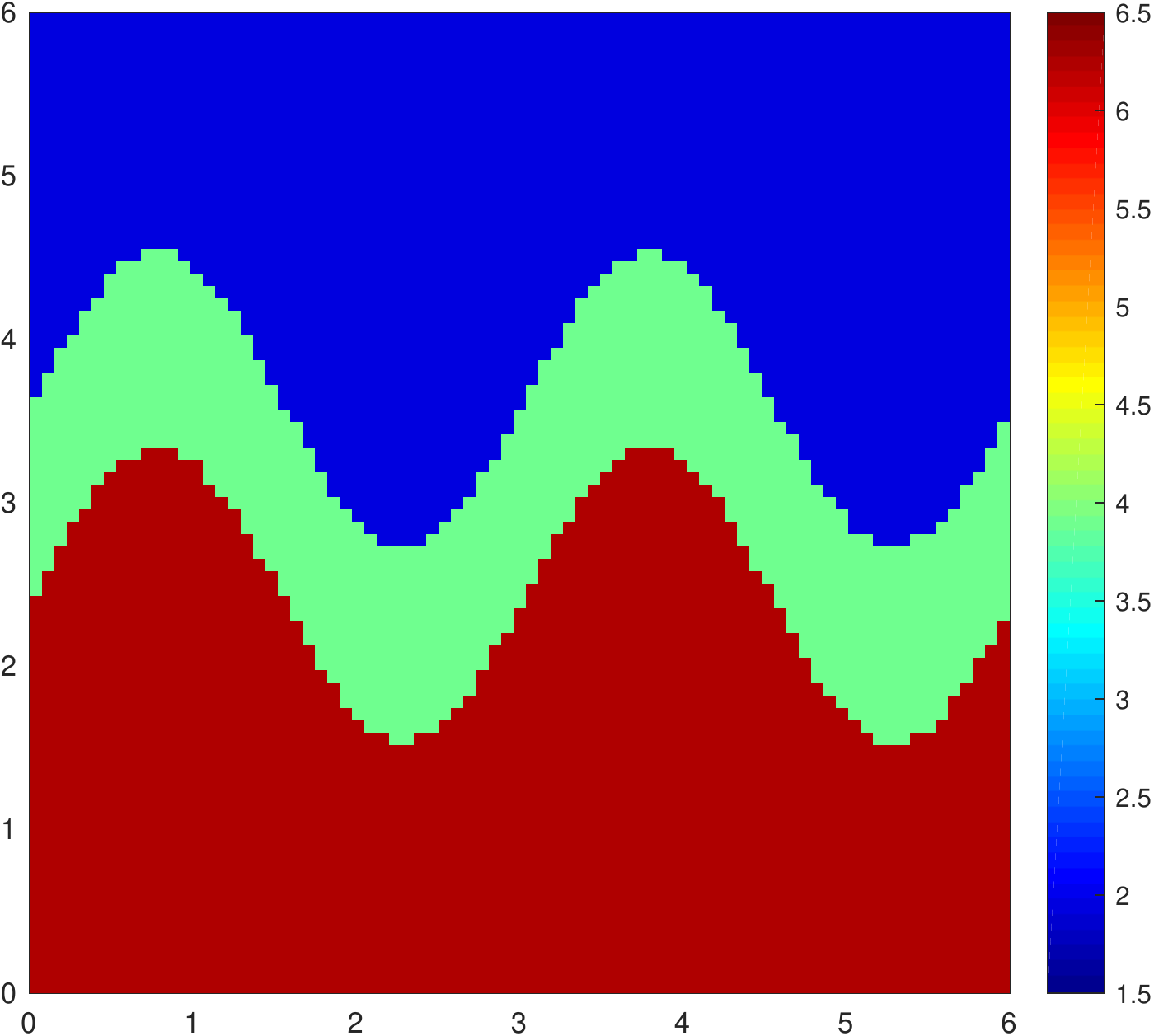}\includegraphics[width=0.3\linewidth, trim=0cm 1cm 0cm 0cm clip]{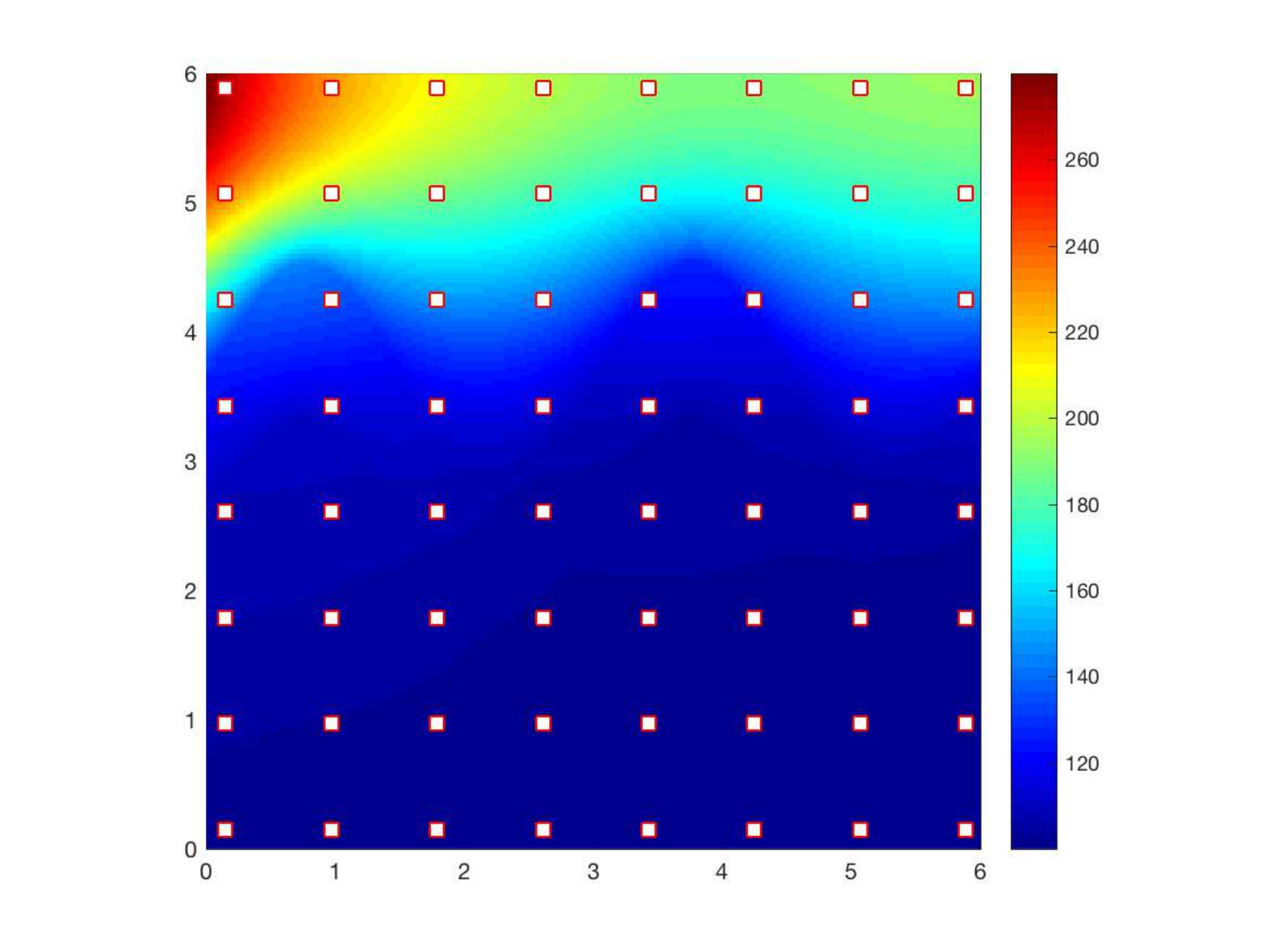}}\\
\endgroup
\subfloat[\mmd{(Top) Logarithm of representative samples of $F(u,\tau)$ under the hierarchical posterior. (Middle) Logarithm of the  approximations of $F(\mathbb{E}(u),\mathbb{E}(\tau))$. (Bottom) Logarithm of the approximations of $\mathbb{E}(F(u,\tau))$. From left-to-right, $\tau$ is initialized at $\tau = 10,30,50,70,90$.}]{\includegraphics[width=0.9\linewidth]{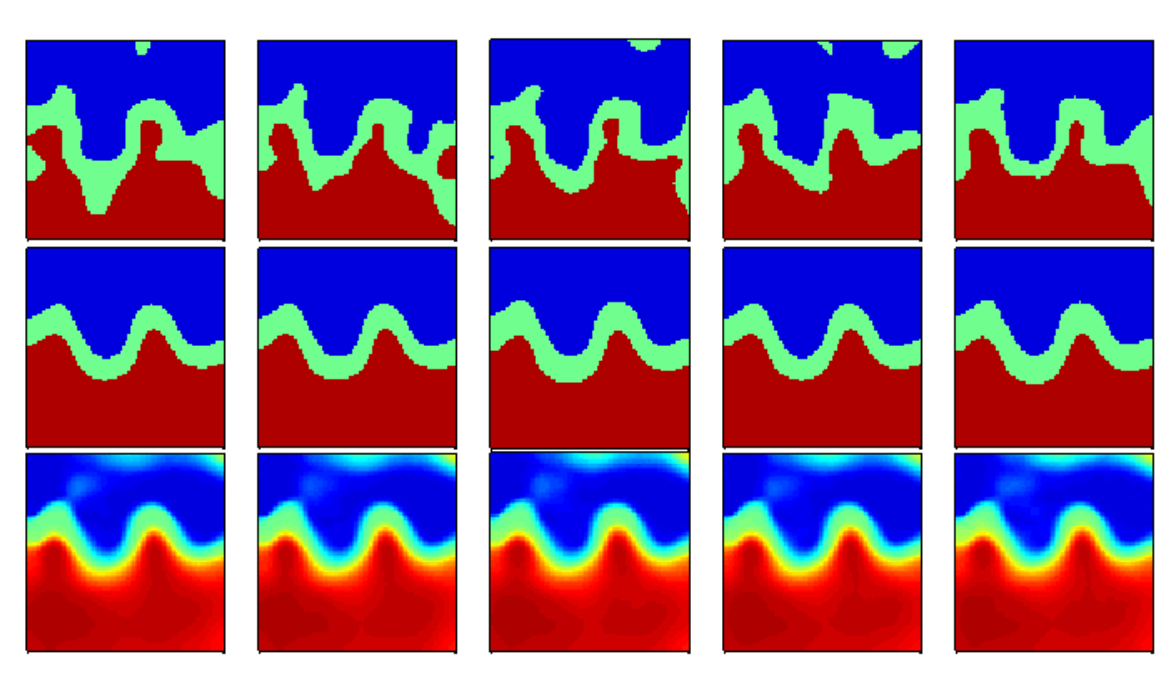}}\\
\subfloat[\mmd{As in (b), using the non-hierarchical method. From left-to-right, $\tau$ is fixed at $\tau = 10,30,50,70,90$.}]{\includegraphics[width=0.9\linewidth]{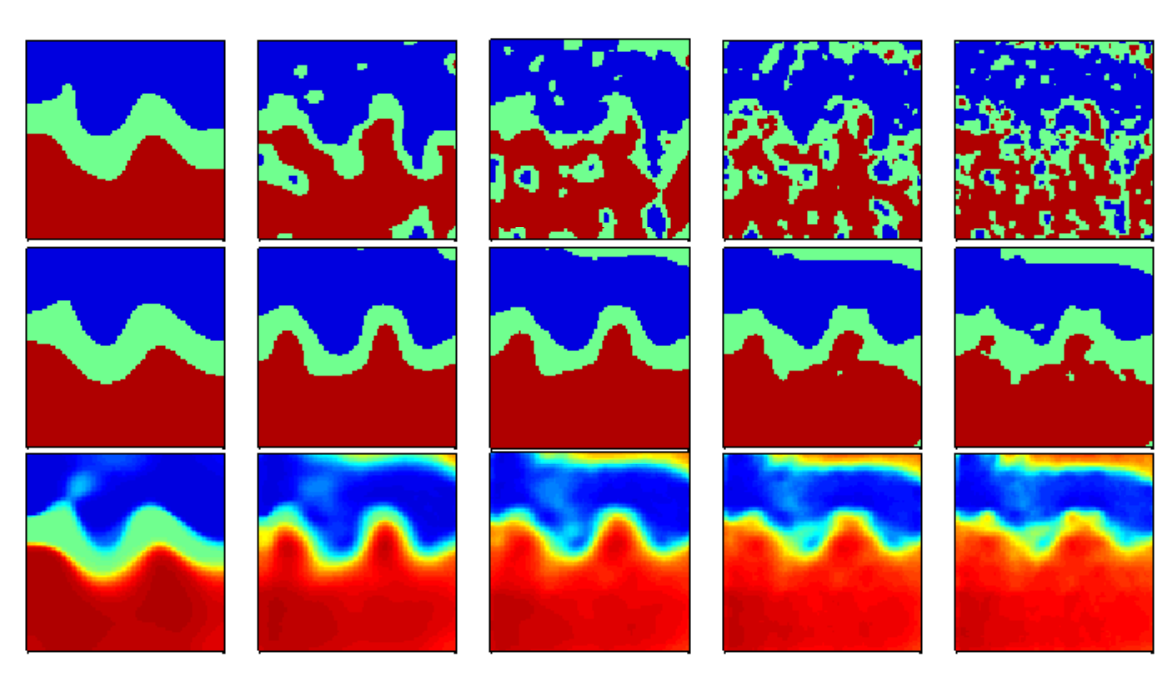}}
\caption{\mmd{Simulations for the groundwater flow model.}}
\label{fig:gw_0}
\end{center}
\end{figure*}

\begin{figure*}
\begin{center}
\includegraphics[width=0.9\linewidth,trim=0cm 0cm 0.15cm 0cm,clip]{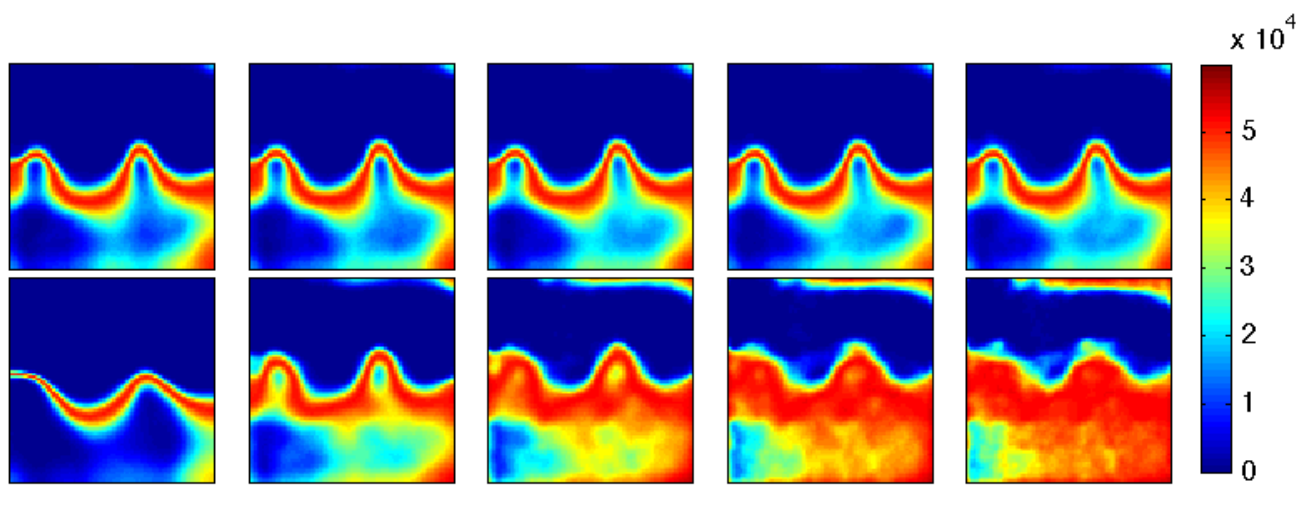}

\caption{(Groundwater flow model) Approximations of $\text{Var}\big(F(u,\tau)\big)$ using the hierarchical (top) and the non-hierarchical (bottom) MCMC.}
\label{fig:gw_3}
\end{center}
\end{figure*}

\begin{figure*}
\begin{center}
\subfloat[\mmd{(Top) Representative samples of the rescaled level-set function $\tau^4\cdot u$ and (bottom) approximations of $\mathbb{E}(\tau^4\cdot u)$ using the hierarchical method. From left-to-right, $\tau$ is initialized at $\tau = 10,30,50,70,90$.}]{\includegraphics[width=0.9\linewidth,trim=0cm 0cm 0.15cm 0cm,clip]{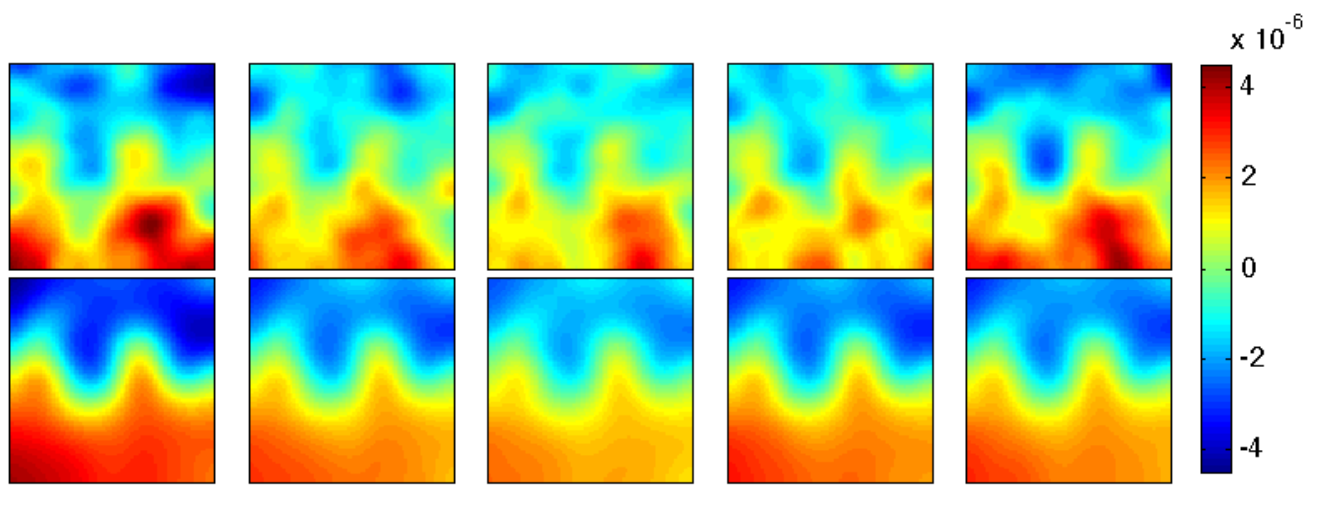}}\\
\subfloat[\mmd{As in (a), using the non-hierarchical method. From left-to-right, $\tau$ is fixed at $\tau = 10,30,50,70,90$.}]{\includegraphics[width=0.9\linewidth,trim=0cm 0cm 0.15cm 0cm,clip]{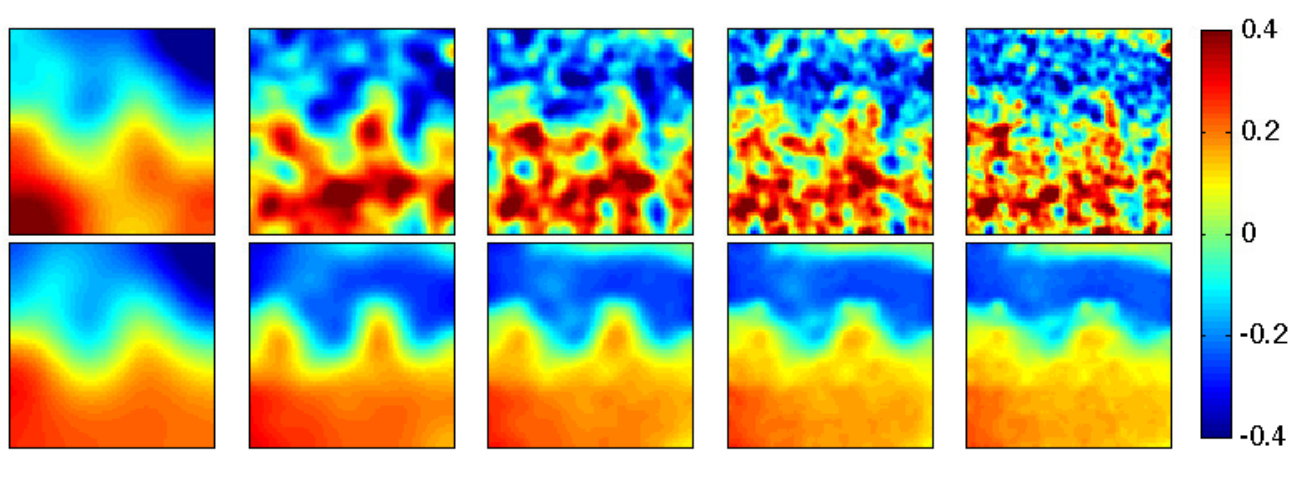}}\\
\caption{\mmd{(Groundwater flow model) Representative samples and sample means of the level set function. The rescaling $\tau^4$ means that the above quantities have the same approximate amplitude. True inverse length scale is $\tau = 15$.}}
\label{fig:gw_4}
\end{center}
\end{figure*}

\begin{figure*}
\begin{center}
\includegraphics[width=\linewidth,trim=2.3cm 0cm 3.5cm 0cm, clip]{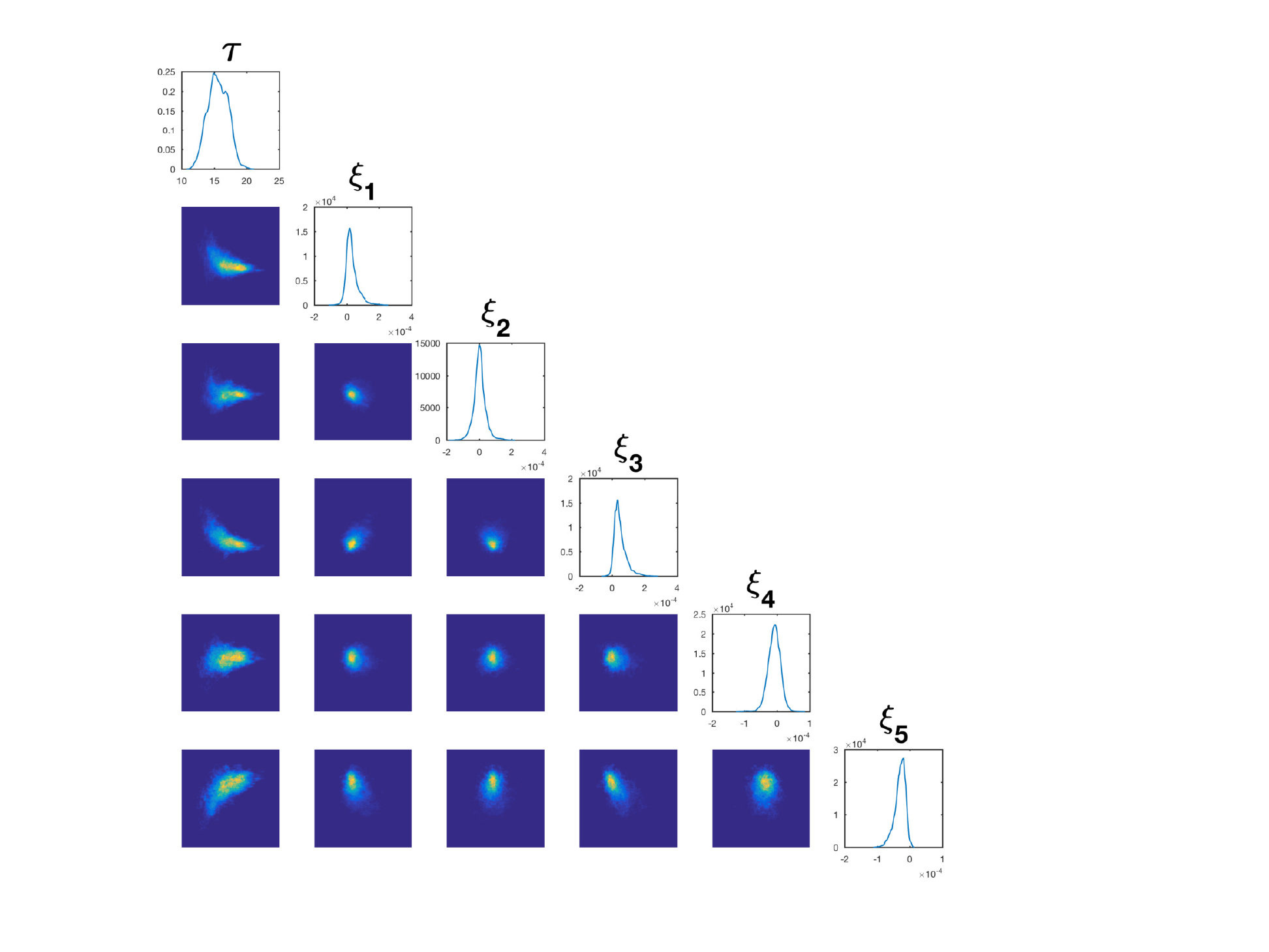}
\caption{(Groundwater flow model) (diagonal) Empirical densities of $\tau$ and the first five KL modes of $u$. (off-diagonal) Empirical joint densities.}
\label{fig:gw_5}
\end{center}
\end{figure*}

\subsection{Electrical Impedance Tomography}
Finally we consider the electrical impedance tomography (EIT) problem. This problem has previously been approached with a non-hierarchical Bayesian level set method \cite{eit}. In this subsection we show that the hierarchical approach outperforms the non-hierarchical approach in the case where the true conductivity is a binary field, given the same number of forward model evaluations.
 
\label{ssec:eit}
\subsubsection{The forward model}
\label{sssec:eit_fwd}
EIT is an imaging technique which attempts to infer the internal conductivity of a body from boundary voltage measurements. Typical applications include medical imaging, as well as subsurface imaging where it is known as electrical resistivity tomography (ERT). We utilize the complete electrode model (CEM), proposed in \cite{cheney}. This is a physically accurate model which has been shown to agree with experimental data up to measurement precision. The strong form of the PDE governing the model is given by
\begin{align*}
\begin{cases}
\displaystyle-\nabla\cdot(\kappa(x)\nabla v(x)) = 0 & x \in D\\[.8em]
\displaystyle\int_{e_l} \kappa\frac{\partial v}{\partial n}\,\dee S = I_l & l=1,\ldots,L\\[.8em]
\displaystyle\kappa(x)\frac{\partial v}{\partial n}(x) = 0 & x \in \partial D\setminus\bigcup_{l=1}^L e_l\\[.8em]
\displaystyle v(x) + z_l\kappa(x)\frac{\partial v}{\partial n}(x) = V_l & x \in e_l, l=1,\ldots,L.
\end{cases}
\end{align*}
Here $D\subseteq \mathbb{R}^2$ is the domain and $\{e_l\}_{l=1}^L \subseteq \partial D$ are electrodes on the boundary upon which currents $\{I_l\}_{l=1}^L$ are injected and voltages $\{V_l\}_{l=1}^L$ are read. The numbers $\{z_l\}_{l=1}^L$ represent the contact impedances of the electrodes. The field \mmd{$\kappa$} represents the conductivity of the body and $v$ represents the potential within the body\footnote{\mmd{In the EIT literature the conductivity field is often denoted $\sigma$, however we have already used this in denoting the marginal variance of random fields.}}. It should be noted that the solution of this PDE comprises both a potential $v \in H^1(D)$ and a vector $\{V_l\}_{l=1}^L$ of boundary voltage measurements.

The inverse problem we consider is the recovery of \mmd{$\kappa$} from a sequence of boundary voltage measurements. A number of (linearly independent) current stimulation patterns $\{I_l\}_{l=1}^L$ may be performed to provide more information; we assume that we perform the maximum $M = L-1$ measurements. Let $Z = L^p(D)$ for some $1\leq p < \infty$ and $Y = \mathbb{R}^{J}$ where $J = LM$. We can concatenate the boundary voltage measurements arising from different stimulation patterns to yield a map $S:Z\rightarrow Y$,
\mmd{\[
\kappa \mapsto (V^{(1)},V^{(2)},\ldots,V^{(M)})
\]
}where $V^{(m)} = \{V_l^{(m)}\}_{l=1}^L \in \mathbb{R}^L$, $m=1,\ldots,M$. 

For the experiments we work on a circular domain $D = \{x \in \mathbb{R}^2\;|\;|x| < 1\}$. 16 electrodes are spaced equally around the boundary providing 50\% coverage. All contact impedances are taken to be $z_l = 0.01$. Adjacent electrodes are stimulated with a current of 0.1, so that the matrix of stimulation patterns $I = \{I^{(j)}\}_{j=1}^{15} \in \mathbb{R}^{16\times 15}$ is given by
\[
\begingroup
\renewcommand{\arraycolsep}{0.18cm} 
I = 0.1\times\left(
\begin{array}{cccc}
+1 & 0 & \cdots & 0\\
-1 & +1 &\cdots & 0\\
0 & -1 & \ddots & 0\\
\vdots & \vdots & \ddots & +1\\
 0 & 0 & 0 & -1
\end{array}\right).
\endgroup
\]

We define our forward map $\mathcal{G}:X\times\mathbb{R}^+\rightarrow\mathbb{R}^J$ by $\mathcal{G} = S\circ F$. As in the groundwater flow example, assume that each $\kappa_i$ in the definition of the level set map is strictly positive. We do not have a continuity result for the map $S$ on $L^p$ for any $1\leq p < \infty$. However the almost-sure continuity of the map $\mathcal{G}$ can be seen via a modification of the proof of Proposition 3.5 in \cite{eit} to include the parameter $\tau$; this modification is almost
identical to the proof of Proposition \ref{p:app} given in the appendix. The uniform boundedness of $\mathcal{G}$ follows from a result in \cite{eit} similarly. Hence as was the case with the identity map example, the conclusions of Proposition \ref{p:app} follow, and we can deduce the conclusions of Theorem \ref{t:2}.

\subsubsection{Simulations and results}
\label{sssec:eit_results}

We fix a true conductivity \mmd{$\kappa^\dagger$}, shown in Figure \ref{fig:eit_means}. As with the groundwater flow experiments, this is constructed explicitly and does not have a true value of $\tau$ associated with it. We generate data $y$ as
\mmd{\[
y = S(\kappa^\dagger) + \eta,\;\;\;\eta \sim N(0,\Gamma)
\]
}where we take the noise covariance \mmd{$\Gamma = 0.0002^2\cdot I$ to be white}. The mean relative error on the generated data is approximately 12\%. The data is generated using a mesh of 43264 elements and simulations are performed used a mesh of 10816 elements, in order to avoid an inverse crime. \mmd{Forward solves are performed using the EIDORS software \cite{eidors}.} All \mmd{level set} field samples are defined on the square $[-1,1]^2$ and restricted to the domain $D$. This has the advantage of allowing for efficient sampling via the Fast Fourier Transform, though has the drawback of introducing possibly non-trivial boundary effects on the domain; no such
effects are observed in our problem, however. \mmd{The discretization on the square is performed via the DFT on a grid of $2^7\times 2^7$ points, and we retain all modes.}

The level set map $F$ is defined such that there are 2 phases, taking the constant values 1 and 10. We take the prior \mmd{level set} field mean to be zero, so that in this case $F$ (and hence $\Phi$) becomes independent of $\tau$. Thus a forward model evaluation is not required for the Gibbs update of $\tau$, and each sample of $(u,\tau)$ using the hierarchical method costs virtually the same as one of $u$ using the non-hierarchical method.

Similarly to the previous experiments, we initialize the hierarchical sampling from $\tau = 10,30,50,70,90$ to check for robustness of the method. We use a sharper prior on $\tau$ than was used previously. \mmd{We again use a Gaussian random walk proposal distribution for $\tau$}. We fix the smoothness parameter $\alpha = 5$ in the prior for $u$, \mmd{and again use Neumann boundary conditions}. We again wish to compare how the hierarchical method compares with the non-hierarchical method. We therefore also look at the 5 different posterior distributions that arise when using each of 5 fixed prior inverse length scales $\tau = 10,30,50,70,90$, which gives another 5 MCMC chains. For both the methods we produce $4\times 10^6$ samples for each chain, and discard the first $2\times10^6$ samples as burn-in when calculating quantities of interest.
 
The traces of the values of $\tau$ along the hierarchical chains are shown in Figure \ref{fig:eit_tautrace}. With the exception of the chain initialized at $\tau = 10$, the chains converge to the sample approximate value of $\tau$. Unlike in previous experiments, the traces have a relatively flat period before the approximate linear convergence to the common length scale. Initializing $\tau = 90$ requires an additional $10^6$ samples to converge, over the other converging chains. 
 
Figure \ref{fig:eit_means} shows the push forwards of the sample means from different chains under the level set map, along with approximations of $\mathbb{E}(F(u,\tau))$ and typical samples of $F(u,\tau)$ coming from the different posteriors. In both the hierarchical and non-hierarchical methods, the chains initialized/fixed at $\tau = 10$ fail to recover the true conductivity, similarly to what was observed with the identity map experiments when initializing at $\tau = 5$. The other chains for the hierarchical method produce very similar results to one another, whilst the effect of fixing the length scale to be too short is apparent in the figures for the non-hierarchical method.

In Figure \ref{fig:eit_var} we see approximations to Var($F(u,\tau)$) under the different posteriors. In both cases, variance is highest around the boundaries of the two inclusions. The difference between the hierarchical and non-hierarchical methods is more apparent here, with higher variance between the two inclusions when the length scale is fixed to be too short.

Again, we look at the level set function $u$ itself in Figure \ref{fig:eit_means_u}. In these plots, as before, we rescale the level set function by $\tau^{\alpha-d/2} = \tau^4$ so that they are all of approximately the same amplitude. As in the previous experiments, there is noticeable contrast between the means for the hierarchical and non-hierarchical methods, and yet more contrast between the typical samples.

Finally, in Figure \ref{fig:eit_densities}, we show the posterior densities
on the inverse length scale and the first five KL modes, as well as correlations
between them. As with the groundwater flow example, although there is no
``true'' inverse length scale, the data is sufficiently informative to define
a small range of values for this parameter under the posterior.

\begin{figure}
\begin{center}
\includegraphics[width=0.7\linewidth]{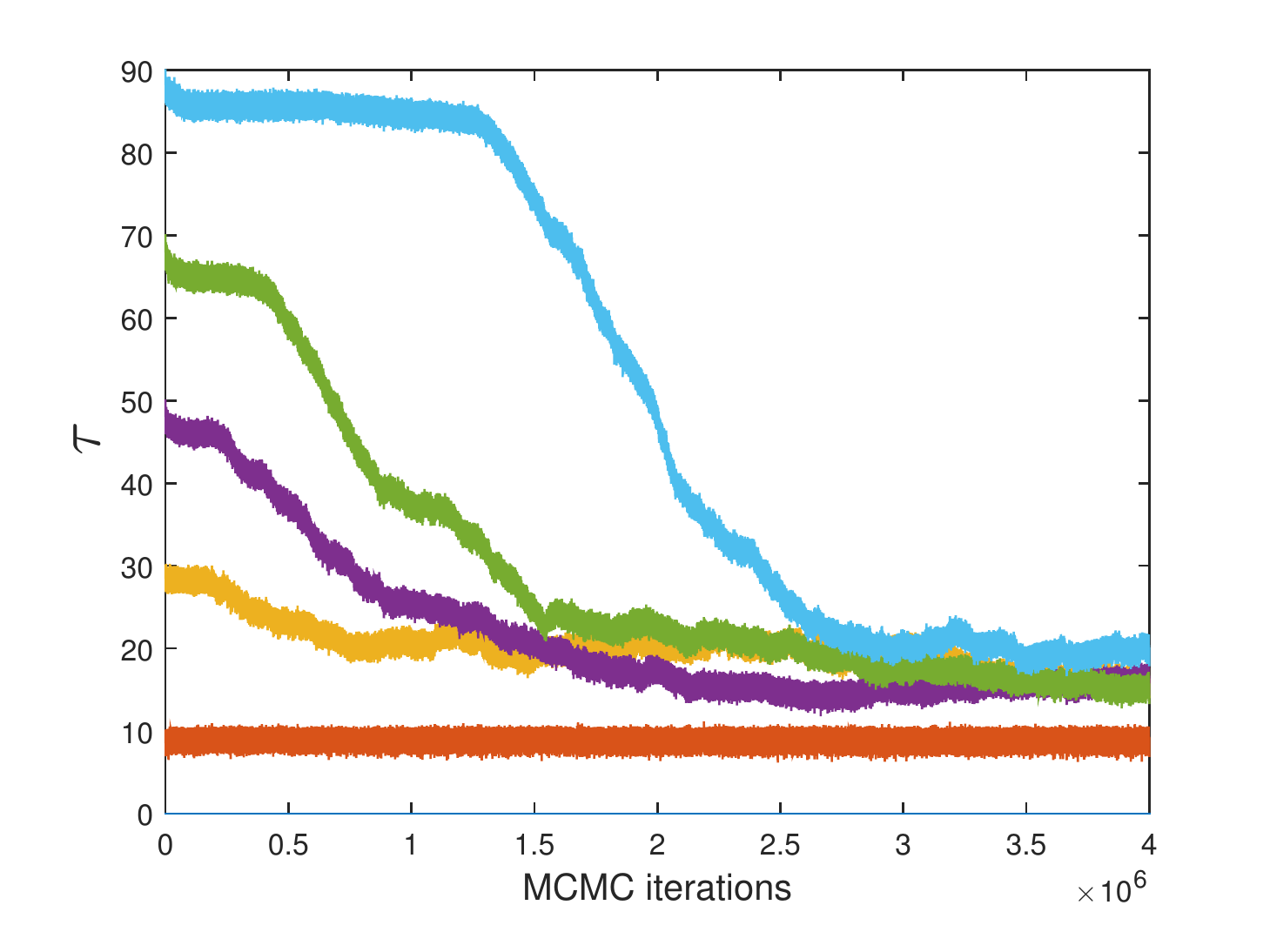}
\caption{(EIT model) The trace of $\tau$ along the MCMC chain, when initialized at the 5 different values $\tau = 10,30,50,70,90$.}
\label{fig:eit_tautrace}
\end{center}
\end{figure}

\begin{figure*}
\begin{center}
\begingroup
\captionsetup[subfigure]{width=0.9\textwidth}
\subfloat[\mmd{(Left) True conductivity field used to generate the data $y$. (Right) The entries $y_i$ of the data vector $y$, plotted against $i$.}]{\includegraphics[width=0.3\linewidth]{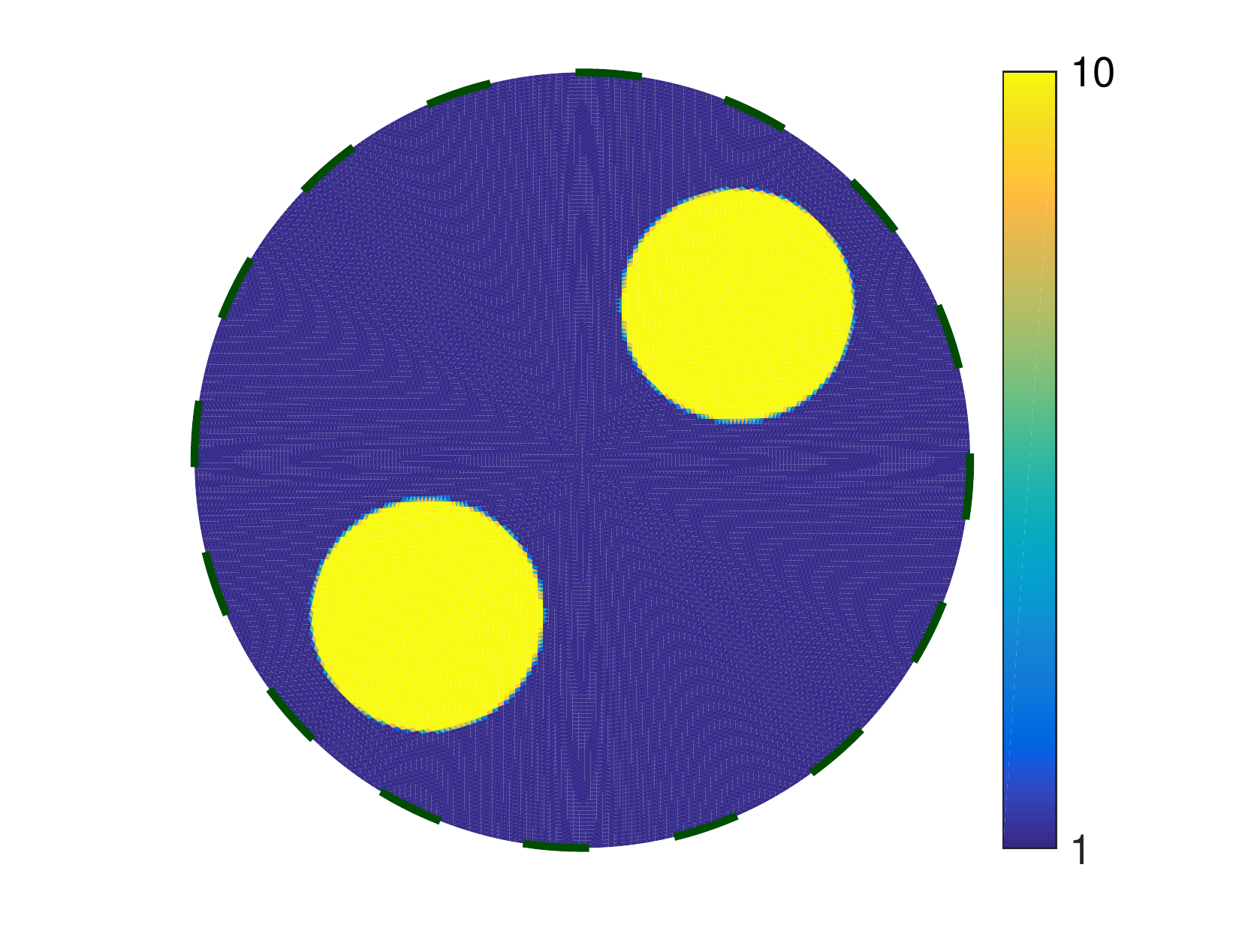}\includegraphics[width=0.3\linewidth]{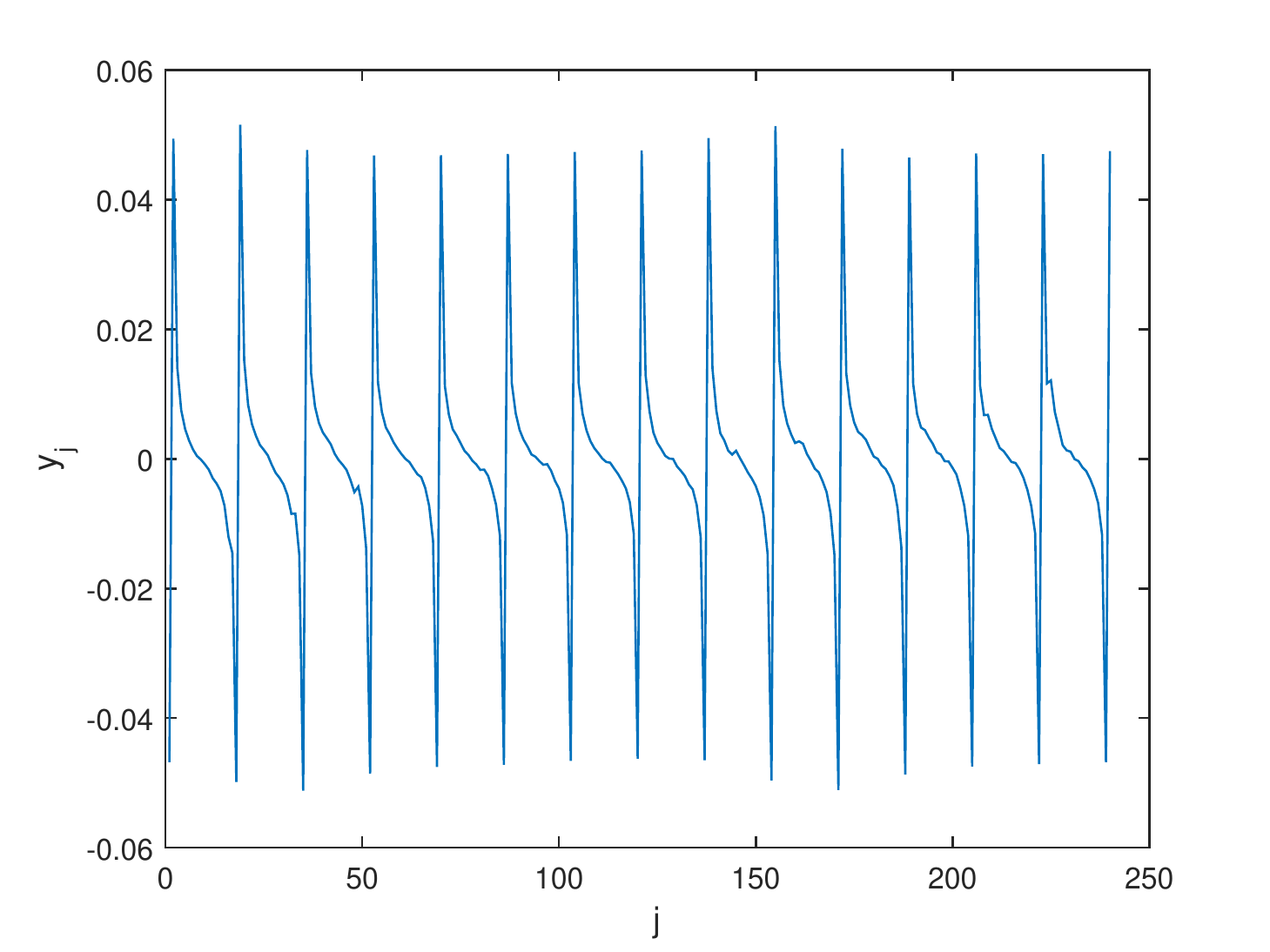}}\\
\endgroup
\subfloat[\mmd{(Top) Representative samples of $F(u,\tau)$ under the hierarchical posterior. (Middle) Approximations of $F(\mathbb{E}(u),\mathbb{E}(\tau))$. (Bottom) Approximations of $\mathbb{E}(F(u,\tau))$. From left-to-right, $\tau$ is initialized at $\tau = 10,30,50,70,90$.}]{\includegraphics[width=0.9\linewidth]{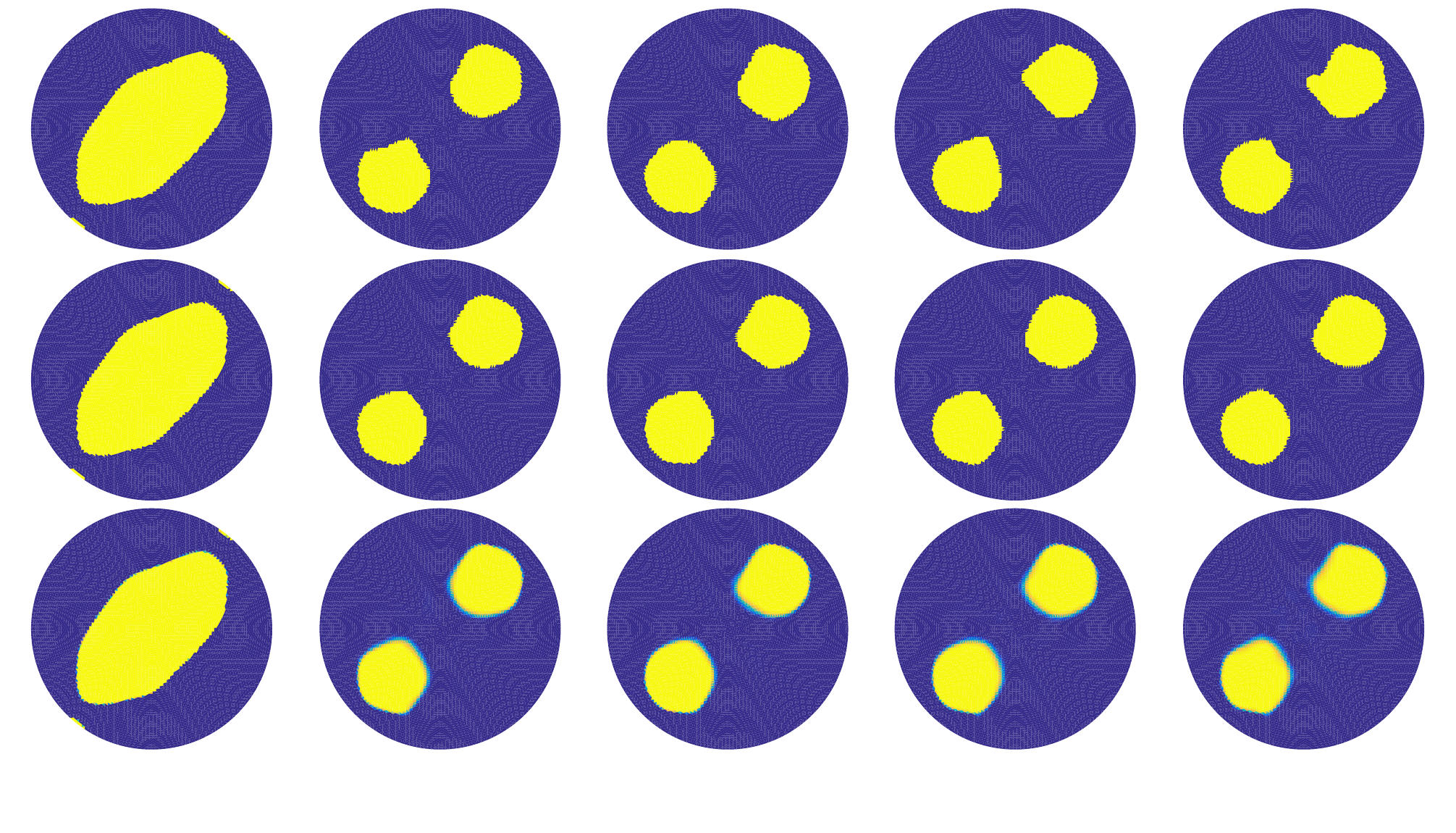}}\\
\subfloat[\mmd{As in (b), using the non-hierarchical method. From left-to-right, $\tau$ is fixed at $\tau = 10,30,50,70,90$.}]{\includegraphics[width=0.9\linewidth]{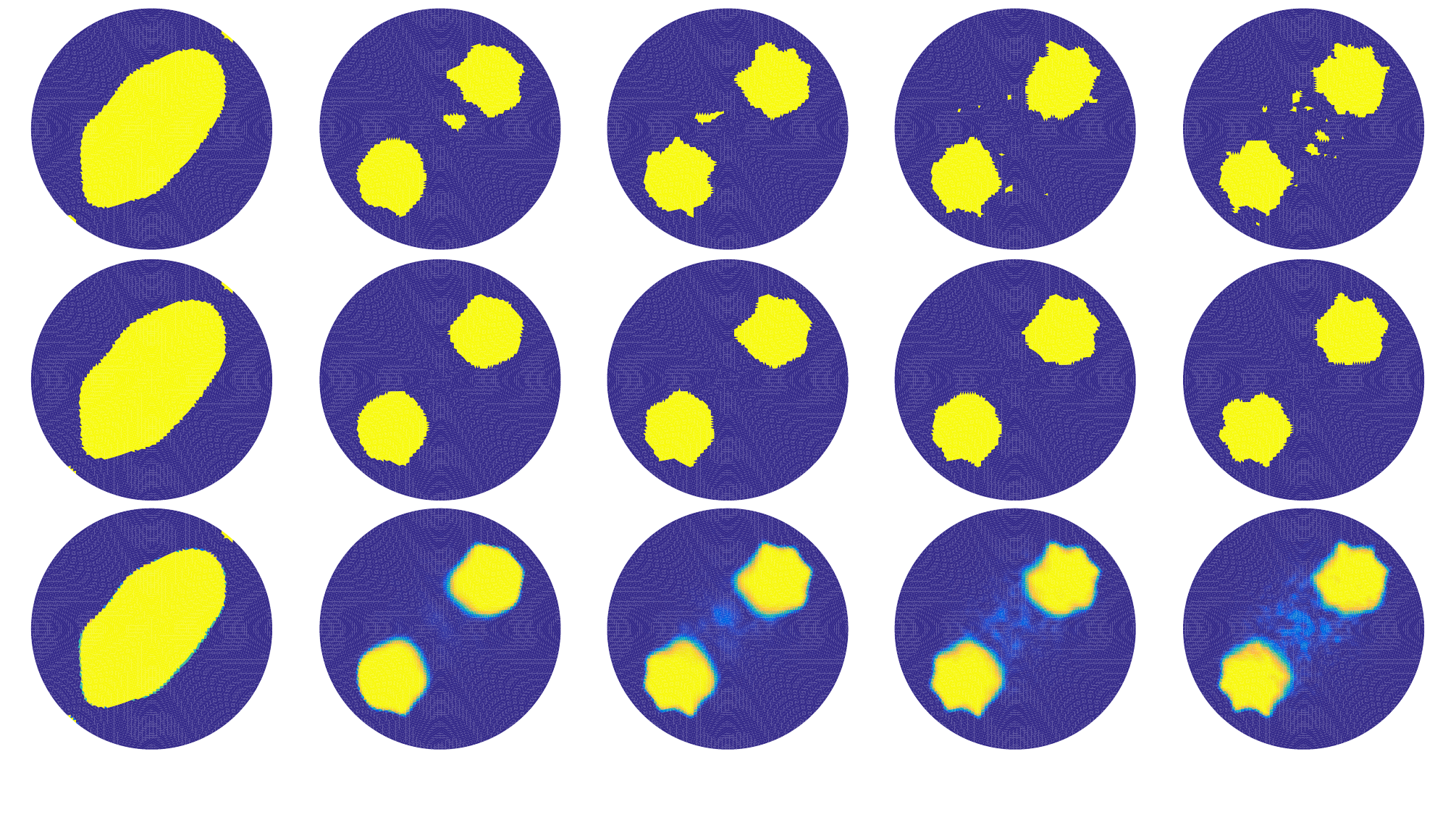}}
\caption{\mmd{Simulations for the EIT model.}}
\label{fig:eit_means}
\end{center}
\end{figure*}

\begin{figure*}
\begin{center}
\includegraphics[width=0.9\linewidth,trim=0cm 0cm 2cm 0cm,clip]{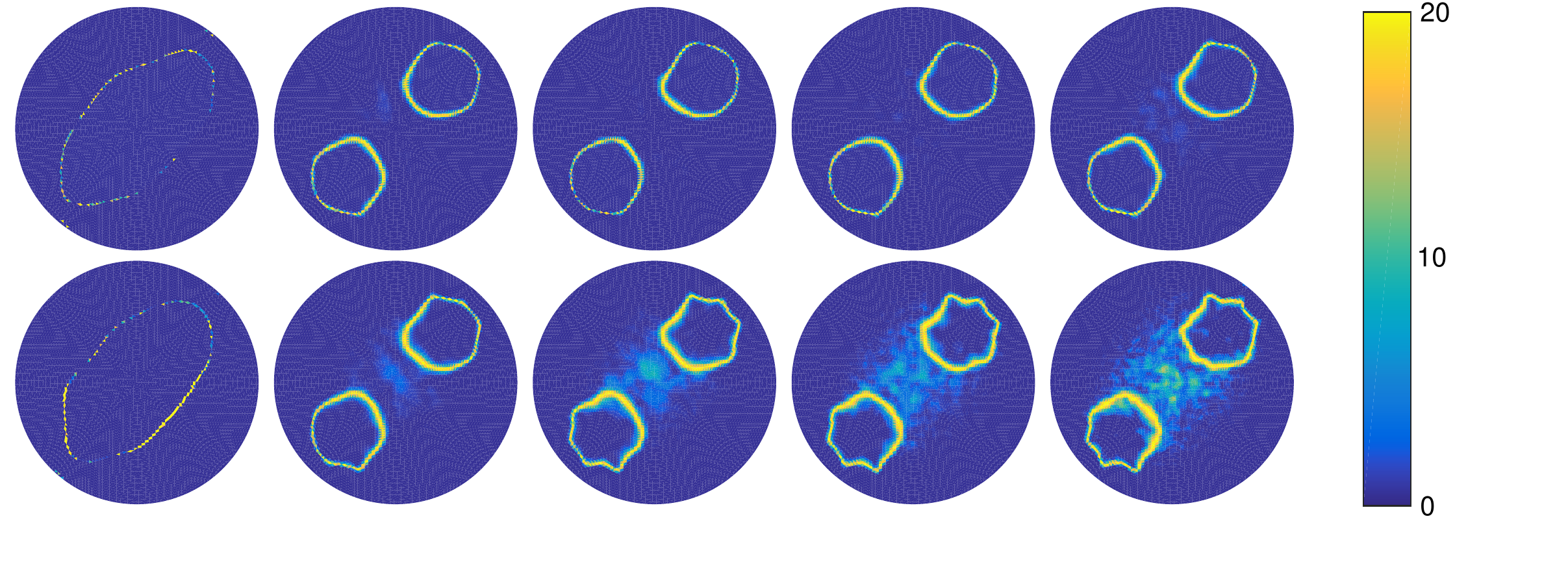}
\caption{(EIT model) Approximations of $\text{Var}(F(u,\tau))$ using the hierarchical (top) and fixed (bottom) priors, with $\tau$ initialized or fixed at $\tau = 10,30,50,70,90$, from left-to-right.}
\label{fig:eit_var}
\end{center}
\end{figure*}

\begin{figure*}
\begin{center}
\subfloat[\mmd{(Top) Representative samples of the rescaled level-set function $\tau^4\cdot u$ and (bottom) approximations of $\mathbb{E}(\tau^4\cdot u)$ using the hierarchical method. From left-to-right, $\tau$ is initialized at $\tau = 10,30,50,70,90$.}]{\includegraphics[width=0.9\linewidth,trim=0cm 0cm 2cm 0cm,clip]{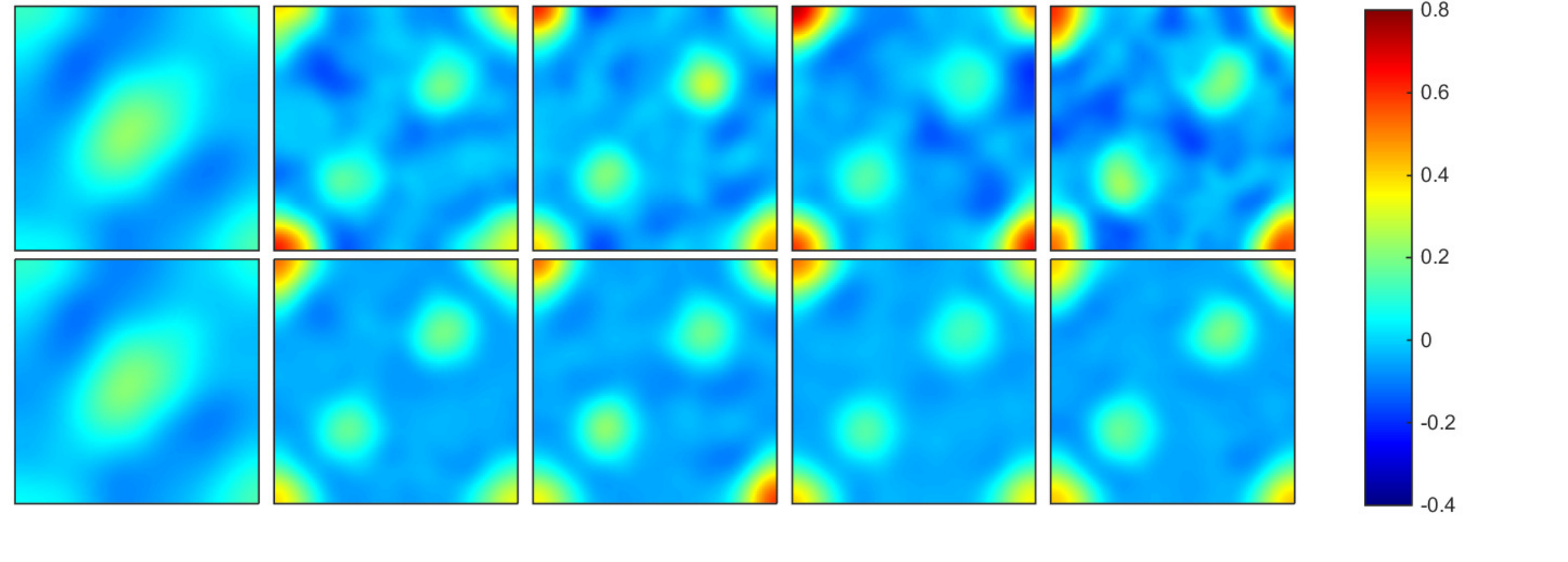}}\\
\subfloat[\mmd{As in (a), using the non-hierarchical method. From left-to-right, $\tau$ is fixed at $\tau = 10,30,50,70,90$.}]{\includegraphics[width=0.9\linewidth,trim=0cm 0cm 2cm 0cm,clip]{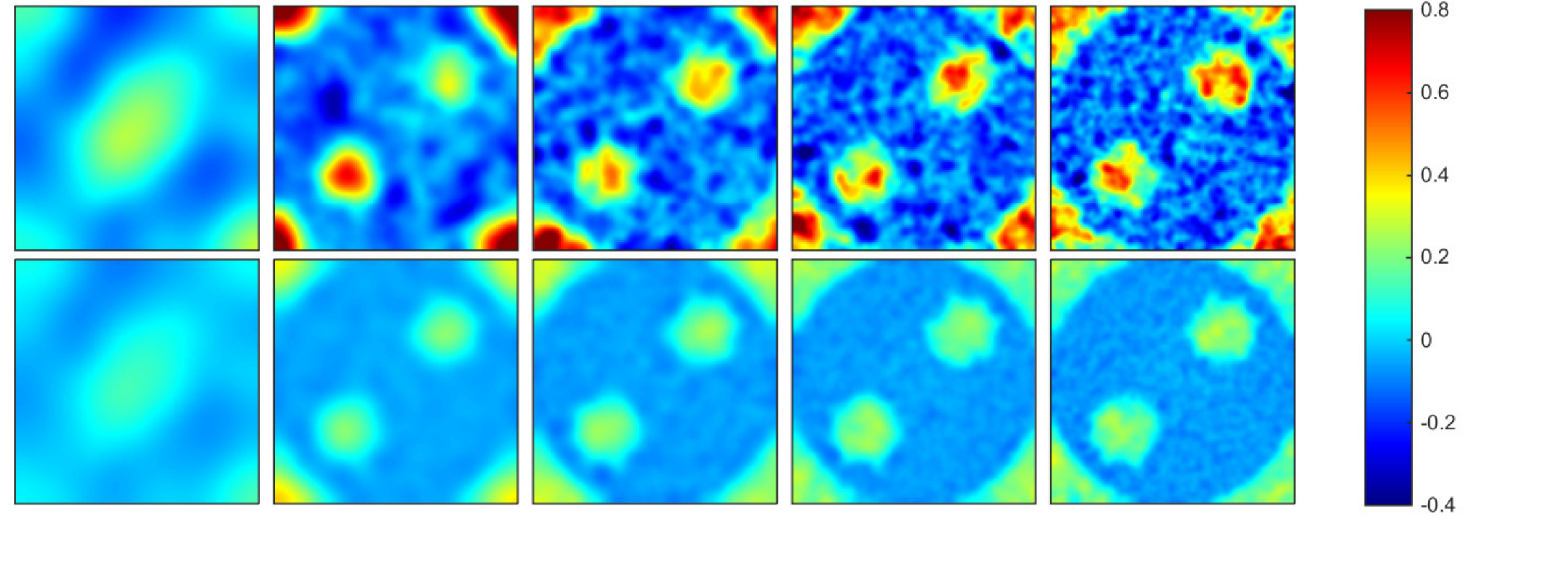}}\\
\caption{\mmd{(EIT model) Representative samples and sample means of the level set function. The rescaling $\tau^4$ means that the above quantities have the same approximate amplitude. True inverse length scale is $\tau = 15$.}}
\label{fig:eit_means_u}
\end{center}
\end{figure*}

\begin{figure*}
\begin{center}
\includegraphics[width=\textwidth]{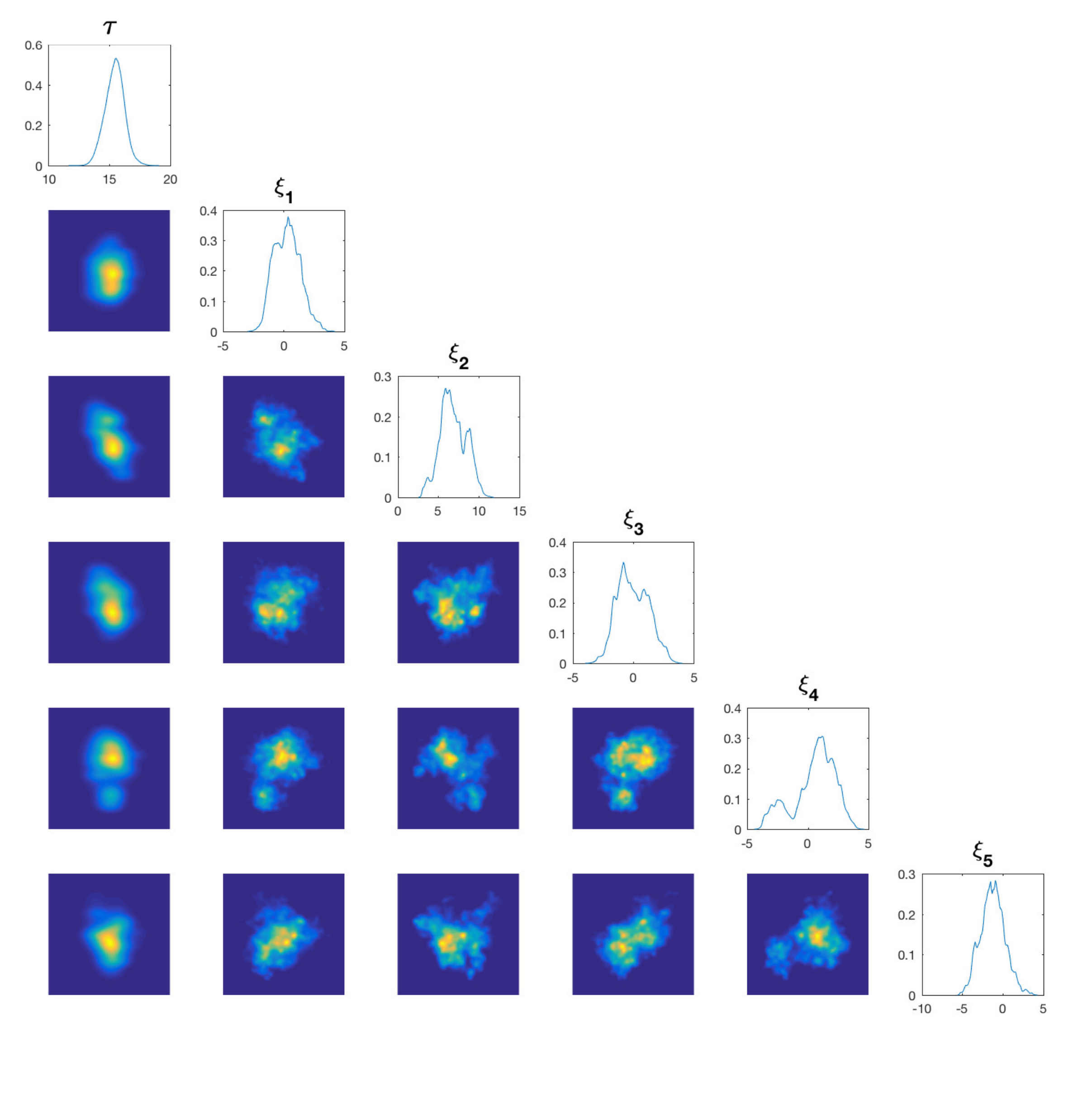}
\caption{(EIT model) (diagonal) Empirical densities of $\tau$ and the first five KL modes of $u$. (off-diagonal) Empirical joint densities.}
\label{fig:eit_densities}
\end{center}
\end{figure*}

\section{Conclusions}
\label{sec:conc}

The level set method is an attractive approach to inverse problems for
the detection of interfaces. Furthermore the Bayesian approach is particularly
desirable when there is a need to quantify uncertainty. In this paper
we have shown that Bayesian level set inversion is considerably enhanced
by a hierarchical approach in which the length scale of the underlying level
set function is inferred from the data. We have demonstrated this
by means of three examples of interest arising in, respectively, the
information, physical and medical sciences; however
many potential applications remain to be explored and this provides an
interesting avenue for future work.

We also developed the theoretical underpinnings for our hierarchical method.
Our work is based on a Metropolis-within-Gibbs approach which alternates
between updating the level set function and the length-scale. The
Metropolis method we use for the \mmd{level set} field update does not use
derivatives of the log-likelihood,
and could be improved by doing so, using the infinite
dimensional variants on MALA and HMC (which use first derivative
information, see the citations in \cite{Cotter2013})
or the manifold MALA and HMC methods, which use higher
order derivatives \cite{CaldGiro}. Another interesting
direction for future work is the design of 
methods with more informed proposals which 
exploit correlations in the level set function and its length-scale. 
And finally it would be interesting to consider pseudo-marginal methods
to sample the hierarchical parameter alone, as in \cite{Giro}.

\mmd{Assuming independence under the prior, it would require little further work to treat the thresholding levels $\{c_i\}$ and the values of the thresholded function $\{\kappa_i\}$ as part of the inference as well; we omitted this here for the sake of clarity. Such a model may be more realistic, and numerical studies of such models may prove interesting.  Another extension of interest may be to place a hyperprior upon the regularity parameter also, which may be useful for improving rates of convergence \cite{van2009adaptive}. This is a more challenging task, again related to singularity of measures. The paper \cite{ABPS14} discusses ways in which this may be done, however it is still an open question in terms of theory and optimal algorithms. Additionally, it may be of interest to overcome the restriction of the ordering of phases $\{\kappa_i\}$ by means of a vector level set method \cite{Tai2004}.}

Finally we mention that the use of a single length-scale within an isotropic
prior is a simple example of more sophisticated hierarchical approaches
which attempt to learn non-stationary and non-isotropic \cite{calvetti2007gaussian,calvetti2008hypermodels}
features of the
level set function from the data. This provides an interesting opportunity
for future work and for ideas from machine learning to play a role
in the solution of inverse problems for interfaces.

\begin{appendix}
\section{Appendix}
\label{sec:A}

\subsection{Proof of Theorems}
\label{ssec:wdp}

\begin{proof}[Theorem \ref{t:wmequiv}]
\begin{enumerate}[(i)]
\item Note that it suffices to show that $\mu_0^\tau \sim \mu^0_0$ for all $\tau > 0$. (Here $\sim$ denotes ``equivalent as measures''). It is known that the eigenvalues of $-\Delta$  on $\mathbb{T}^d$ grow like $j^{2/d}$, and hence the eigenvalues $\lambda_j(\tau)$ of $\mathcal{C}_{\alpha,\tau}$ decay like
\[
\lambda_j(\tau) \asymp (\tau^2 + j^{2/d})^{-\alpha},\;\;\;j\geq 1.
\]
Using Proposition \ref{prop:fhdensity} \mmd{below}, we see that $\mu_0^\tau \sim \mu_0^0$ if
\begin{align*}
\hspace{1.0cm} 
\sum_{j=1}^\infty\left(\frac{\lambda_j(\tau)}{\lambda_j(0)} - 1\right)^2 < \infty.
\end{align*}
Now we have
\begin{align*}
\left|\frac{\lambda_j(\tau)}{\lambda_j(0)} - 1\right|&\asymp \left|\left(1 + \frac{\tau^2}{j^{2/d}}\right)^{-\alpha} - 1\right|\\
&\leq \left|\exp\left(\frac{\alpha\tau^2}{j^{2/d}}\right) - 1\right|\\
&\leq C\frac{\alpha\tau^2}{j^{2/d}}.
\end{align*}
Here we have used that $(1+x)^{-\alpha}-1 \leq \exp(\alpha x)-1$ for all $x \geq 0$ to move from the first to the second line, and that $\exp(x)-1 \leq Cx$ for all $x \in [0,x_0]$ to move from the second to third line.
Now note that when $d \leq 3$, $j^{-4/d}$ is summable, and so it follows that $\mu_0^\tau\sim\mu_0^0$.

\item The case $\tau = 0$ is Theorem 2.18 in \cite{lecturenotes}; the general result follows from the equivalence above.

\item Let \mi{$v \sim N(0,\mathcal{D}_{\sigma,\nu,\ell})$} where $\mathcal{D}_{\sigma,\nu,\ell}$ is as given by (\ref{wmcov}). \mmd{Then we have
\begin{align*}
\mathcal{D}_{\sigma,\nu,\ell} &= \beta\ell^d(I - \ell^2\Delta)^{-\nu-d/2}\\
&= \beta\ell^{d}\ell^{-2\nu-d}(\ell^{-2}I - \Delta)^{-\nu-d/2}\\
&= \beta \tau^{2\alpha - d}(\tau^2I - \Delta)^{-\alpha}\\
&= \beta\tau^{2\alpha - d}\mathcal{C}_{\alpha,\tau}.
\end{align*}
Hence, letting \mi{$u \sim N(0,\mathcal{C}_{\alpha,\tau})$}, we see that
\mi{\begin{align*}
\mathbb{E}\|u\|^2 &= \tr(\mathcal{C}_{\alpha,\tau})\\
&= \frac{1}{\beta}\tau^{d-2\alpha}\tr(\mathcal{D}_{\sigma,\nu,\ell})\\
&= \frac{1}{\beta}\tau^{d-2\alpha}\mathbb{E}\|v\|^2.
\end{align*}}\qed}

\end{enumerate}
\end{proof}

\begin{proof}[Theorem \ref{t:2}]
Proposition \ref{p:app} which follows
shows that $\mu_0$ and $\Phi$ satisfy Assumptions 2.1 in \cite{levelset}, 
with $U = X\times\mathbb{R}^+$. Theorem 2.2 in \cite{levelset} then tells 
us that the posterior exists and is Lipschitz with respect to the data.\qed
\end{proof}

\begin{proposition}
\label{p:app}
Let $\mu_0$ be given by (\ref{eq:prior}) and $\Phi:X\times\mathbb{R}^+\rightarrow\mathbb{R}$ be given by (\ref{eq:phi}). Let Assumptions \ref{ass:forward} hold. Then
\begin{enumerate}[(i)]
\item for every $r > 0$ there is a $K = K(r)$ such that, for all $(u,\tau) \in X\times\mathbb{R}^+$ and all $y \in Y$ with $|y|_\Gamma < r$,
\[
0 \leq \Phi(u,\tau;y) \leq K;
\]
\item for any fixed $y \in Y$, $\Phi(\cdot,\cdot;y):X\times\mathbb{R}^+\rightarrow\mathbb{R}$ is continuous $\mu_0$-almost surely on the complete probability space $(X\times\mathbb{R}^+,\mathcal{X}\otimes\mathcal{R},\mu_0)$;
\item for $y_1,y_2 \in Y$ with $\max\{|y_1|_\Gamma,|y_2|_\Gamma\} < r$, there exists a $C = C(r)$ such that for all $(u,\tau) \in X\times\mathbb{R}^+$,
\[
|\Phi(u,\tau;y_1) - \Phi(u,\tau;y_2)| \leq C|y_1-y_2|_\Gamma.
\]
\end{enumerate}
\end{proposition}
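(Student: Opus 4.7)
\textbf{Plan for the proof of Proposition \ref{p:app}.}

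The three parts all flow from combining Assumptions \ref{ass:forward} with the fact that the level set map $F$ produces uniformly bounded outputs. The only nontrivial part is (ii), where I must exploit that nondegenerate Gaussian random fields almost surely avoid assigning positive Lebesgue measure to their level sets.

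For (i), the non-negativity of $\Phi$ is immediate from its definition as one-half of a norm squared. For the upper bound, I would first observe that $F(u,\tau)=\sum_{i=1}^n \kappa_i \mathds{1}_{D_i(u,\tau)}$ satisfies the deterministic $L^\infty$ bound $\|F(u,\tau)\|_{L^\infty}\leq \max_i|\kappa_i|=:M$, independently of $(u,\tau)$, because the sets $D_i(u,\tau)$ are disjoint. Applying Assumption \ref{ass:forward}(ii) with $r=M$ gives $|\mathcal{G}(u,\tau)|\leq C(M)$ uniformly. Then $\Phi(u,\tau;y)=\tfrac{1}{2}|y-\mathcal{G}(u,\tau)|_\Gamma^2\leq |y|_\Gamma^2+|\mathcal{G}(u,\tau)|_\Gamma^2$ yields the bound $K=K(r)$.

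For (ii), I would fix $y$ and argue continuity at each $(u,\tau)$ such that $|\{x\in D:u(x)=c_i(\tau)\}|=0$ for every $i=1,\ldots,n-1$, then show that the complement has $\mu_0$-measure zero. The measure-zero claim follows because, for each fixed $\tau$, Fubini gives
\begin{equation*}
\mathbb{E}_{\mu_0^\tau}\bigl|\{x\in D:u(x)=c_i(\tau)\}\bigr|=\int_D \mathbb{P}_{\mu_0^\tau}(u(x)=c_i(\tau))\,\dee x=0,
\end{equation*}
since $u(x)$ is a Gaussian real random variable (with positive variance at each interior $x$, and the boundary being Lebesgue-null); integrating against $\pi_0(\dee\tau)$ then gives $\mu_0$-a.s. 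Now suppose $(u_k,\tau_k)\to(u,\tau)$ in $X\times\mathbb{R}^+$ with $(u,\tau)$ in this full-measure set. The scaled levels $c_i(\tau_k)=\tau_k^{d/2-\alpha}c_i$ converge to $c_i(\tau)$, and at each $x$ with $u(x)\neq c_i(\tau)$ for all $i$ the strict inequalities $c_{i-1}(\tau)<u(x)<c_i(\tau)$ persist for $(u_k,\tau_k)$ with $k$ large, so $\mathds{1}_{D_i(u_k,\tau_k)}(x)\to\mathds{1}_{D_i(u,\tau)}(x)$ pointwise a.e. in $D$. Dominated convergence gives $F(u_k,\tau_k)\to F(u,\tau)$ in $Z=L^p(D)$, and Assumption \ref{ass:forward}(i) (continuity of $S$ on $\mathrm{Im}(F)$) yields $\mathcal{G}(u_k,\tau_k)\to\mathcal{G}(u,\tau)$, hence $\Phi$ is continuous at $(u,\tau)$.

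For (iii), I would use the standard identity $|a|_\Gamma^2-|b|_\Gamma^2=\langle a-b,a+b\rangle_\Gamma$ with $a=y_1-\mathcal{G}(u,\tau)$ and $b=y_2-\mathcal{G}(u,\tau)$ to write
\begin{equation*}
|\Phi(u,\tau;y_1)-\Phi(u,\tau;y_2)|=\tfrac{1}{2}\bigl|\langle y_1-y_2,\,y_1+y_2-2\mathcal{G}(u,\tau)\rangle_\Gamma\bigr|,
\end{equation*}
then apply Cauchy--Schwarz and the uniform bound on $|\mathcal{G}(u,\tau)|_\Gamma$ from (i) together with $|y_j|_\Gamma<r$ to conclude $|\Phi(u,\tau;y_1)-\Phi(u,\tau;y_2)|\leq C(r)|y_1-y_2|_\Gamma$.

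The main obstacle is (ii): the level set map $F$ is genuinely discontinuous (as a function of $u$) at any level set of $u$ of positive measure, and continuity of $\Phi$ must rely on an almost-sure statement. The additional subtlety here (compared to \cite{levelset}) is that the thresholds $c_i(\tau)=\tau^{d/2-\alpha}c_i$ themselves depend on the parameter $\tau$ that is varying, so one must verify that the bad set $\{(u,\tau):u\text{ takes value }c_i(\tau)\text{ on a set of positive measure}\}$ is $\mu_0$-null. This follows cleanly from Fubini as described above, using the joint structure $\mu_0(\dee u,\dee\tau)=\mu_0^\tau(\dee u)\pi_0(\dee\tau)$.
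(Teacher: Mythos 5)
Your proposal is correct and follows essentially the same route as the paper's proof: uniform boundedness of $F$ plus Assumption \ref{ass:forward}(ii) for parts (i) and (iii), and for part (ii) continuity of $F$ at any $(u,\tau)$ whose level sets $\{x: u(x)=c_i(\tau)\}$ are Lebesgue-null, combined with a Fubini/Tonelli argument showing this holds $\mu_0$-almost surely because $u(x)$ is Gaussian pointwise. The only cosmetic difference is that you establish $F(u_k,\tau_k)\to F(u,\tau)$ in $L^p$ via pointwise a.e.\ convergence of the indicators and dominated convergence, whereas the paper estimates the measures of the pairwise intersections $D_i(u_\eps,\tau_\eps)\cap D_j(u,\tau)$ directly; these are equivalent.
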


\begin{proof}
\begin{enumerate}[(i)]
\item Recall the level set map $F$ defined by (\ref{lvlsetmap}) defined
via the finite constant values $\kappa_i$ taken on each 
subset $D_i$ of $\overline{D}$. We may bound $F$ uniformly:
\[
|F(u,\tau)| \leq \max\{|\kappa_1|,\ldots|\kappa_n|\} =: F_{\max}
\]
for all $(u,\tau) \in X\times\mathbb{R}^+$. Combining this with Assumption \ref{ass:forward}(ii) it follows that $\mathcal{G}$ is uniformly bounded on $X\times\mathbb{R}^+$. The result then follows from the continuity of $y\mapsto \frac{1}{2}|y-\mathcal{G}(u,\tau)|_\Gamma^2$.
\item Let $(u,\tau) \in X\times\mathbb{R}^+$ and let $D_i(u,\tau)$ be as defined by (\ref{eq:di}), and define $D_i^0(u,\tau)$ by
\mi{\begin{align*}
D_i^0(u,\tau) &= \overline{D}_i(u,\tau)\cap \overline{D}_{i+1}(u,\tau)\\
&= \{x \in D\,|\,u(x) = c_i(\tau)\},\;\;\; i=1,\ldots,n-1.
\end{align*}}We first show that $\mathcal{G}$ is continuous at $(u,\tau)$ whenever $|D_i^0(u,\tau)| = 0$ for $i=1,\ldots,n-1$.

Choose an approximating sequence $\{u_\eps,\tau_\eps\}_{\eps >0}$ of $(u,\tau)$ such that $\|u_\eps - u\|_\infty + |\tau_\eps-\tau| < \eps$ for all $\eps > 0$. We will first show that $\|F(u_\eps,\tau_\eps) - F(u,\tau)\|_{L^p(D)}\rightarrow 0$ for any $p \in [1,\infty)$. As in \cite{levelset} Proposition 2.4, we can write
\begin{align*}
F(u_\eps,\tau_\eps) - F(u,\tau) &= \sum_{i=1}^n\sum_{j=1}^n (\kappa_i - \kappa_j)\mathds{1}_{D_i(u_\eps,\tau_\eps)\cap D_j(u,\tau)}\\
&= \sum_{\substack{i,j=1\\i\neq j}}^n (\kappa_i - \kappa_j)\mathds{1}_{D_i(u_\eps,\tau_\eps)\cap D_j(u,\tau)}.
\end{align*}

From the definition of $(u_\eps,\tau_\eps)$,
\begin{align*}
u(x) - \eps < u_\eps(x) < u(x) + \eps,\;\;\;\tau - \eps < \tau_\eps < \tau + \eps 
\end{align*}
for all $x \in D$ and $\eps > 0$. \mmd{We claim that for $|i-j| > 1$ and $\eps$ sufficiently small, $D_i(u_\eps,\tau_\eps)\cap D_j(u,\tau) = \varnothing$. First note that
\mi{\begin{align*}
D_i(u_\eps,\tau_\eps) &= \big\{x \in D\;\big|\; \tau_\eps^{d/2-\alpha}c_{i-1} \leq u_\eps(x) < \tau_\eps^{d/2-\alpha}c_i\big\}\\
&= \big\{x \in D\;\big|\; c_{i-1} \leq \tau_\eps^{\alpha-d/2}u_\eps(x)  < c_i\big\}.
\end{align*}
Then we have that
\begin{align*}
D_i(u_\eps,\tau_\eps)\cap D_j(u,\tau) =\{x \in D\;|\;&c_{i-1} \leq \tau_\eps^{\alpha-d/2}u_\eps(x) < c_i,\\
&c_{j-1} \leq  \tau^{\alpha-d/2}u(x) < c_j\}.
\end{align*}
Now, since $u$ is bounded,
\begin{align*}
\tau^{\alpha-d/2}u(x) -\mathcal{O}(\eps) &< \tau_\eps^{\alpha-d/2}u_\eps(x) < \tau^{\alpha-d/2}u(x) + \mathcal{O}(\eps) 
\end{align*}
and so
\begin{align*}
D_i(u_\eps,\tau_\eps)\cap D_j(u,\tau) \subseteq \{x \in D\;|\;c_{i-1}-\mathcal{O}(\eps) &\leq \tau^{\alpha-d/2}u(x)< c_i + \mathcal{O}(\eps),\\
c_{j-1} &\leq \tau^{\alpha-d/2}u(x) < c_j\}.
\end{align*}
From the strict ordering of the $\{c_i\}_{i=1}^n$ we deduce that for $|i-j| > 1$ and small enough $\eps$, the right hand side is empty.
We hence look at the cases $|i-j| = 1$. With the same reasoning as above, we see that
\begin{align*}
D_i(u_\eps,\tau_\eps)\cap D_{i+1}(u,\tau) &\subseteq \big\{x\in D\;\big|\;c_i -\mathcal{O}(\eps) \leq \tau^{\alpha-d/2}u(x)< c_i + \mathcal{O}(\eps) \big\}\\
&\rightarrow \big\{x \in D\;\big|\; \tau^{\alpha-d/2}u(x) = c_i\big\}\\
&= \big\{x \in D\;\big|\; u(x) = \tau^{d/2-\alpha}c_i\big\}\\
&= D_i^0(u,\tau)
\end{align*}
and also
\begin{align*}
D_i(u_\eps,\tau_\eps)\cap D_{i-1}(u,\tau) &\subseteq \big\{x\in D\;\big|\; c_{i-1} - \mathcal{O}(\eps) < \tau^{\alpha-d/2}u(x) < c_{i-1}\big\}\rightarrow \varnothing.
\end{align*}}}

Assume that each $|D_i^0(u,\tau)| = 0$, then it follows that $|D_i(u_\eps,\tau_\eps)\cap D_j(u,\tau)|\rightarrow 0$ whenever $i \neq j$. Therefore we have that
\begin{align*}
\|F(u_\eps,\tau_\eps) - F(u,\tau)\|_{L^p(D)}^p &=  \sum_{\substack{i,j=1\\i\neq j}}^n \int_{D_i(u_\eps,\tau_\eps)\cap D_j(u,\tau)} |\kappa_i - \kappa_j|^p\,\dee x\\
&\leq (2F_{\max})^p \sum_{\substack{i,j=1\\i\neq j}}^n |D_i(u_\eps,\tau_\eps)\cap D_j(u,\tau)|\\
&\rightarrow 0.
\end{align*}
Thus $F$ is continuous at $(u,\tau)$. By Assumption \ref{ass:forward}(i) it follows that $\mathcal{G}$ is continuous at $(u,\tau)$.

We now claim that $|D_i^0(u,\tau)| = 0$ $\mu_0$-almost surely for each $i$. By Tonelli's theorem, we have that
\begin{align*}
\mathbb{E}|D_i^0(u,\tau)|&= \int_{X\times\mathbb{R}^+}|D_i^0(u,\tau)|\,\mu_0(\dee u,\dee \tau)\\
&=\int_{X\times\mathbb{R}^+}\left(\int_{\mathbb{R^d}}\mathds{1}_{D_i^0(u,\tau)}(x)\,\dee x\right) \mu_0(\dee u,\dee \tau)\\
&=\int_{\mathbb{R}^d}\left(\int_{X\times\mathbb{R}^+}\mathds{1}_{D_i^0(u,\tau)}(x)\,\mu_0(\dee u,\dee \tau)\right) \dee x\\
&=\int_{\mathbb{R}^d}\left(\int_0^\infty \left(\int_X \mathds{1}_{D_i^0(u,\tau)}(x)\,\mu_0^\tau(\dee u)\right)\,\pi_0(\dee \tau)\right) \dee x\\
&=\int_{\mathbb{R}^d}\left(\int_0^\infty \mu_0^\tau(\{u \in X\;|\;u(x) = c_i(\tau)\})\,\pi_0(\dee \tau)\right) \dee x.
\end{align*}
For each $\tau \geq 0$ and $x \in D$, $u(x)$ is a real-valued Gaussian random variable under $\mu_0^\tau$. It follows that $\mu_0^\tau(\{u \in X\;|\;u(x) = c_i(\tau)\}) = 0$, and so $\mathbb{E}|D_i^0(u,\tau)| = 0$. Since $|D_i^0(u,\tau)| \geq 0$ we have that $|D_i^0(u,\tau)| = 0$ $\mu_0$-almost surely. The result now follows.
\item For fixed $(u,\tau) \in X\times\mathbb{R}^+$, the map $y\mapsto\frac{1}{2}|y-\mathcal{G}(u,\tau)|_\Gamma^2$ is smooth and hence locally Lipschitz.\qed
\end{enumerate}
\end{proof}

\begin{proof}[Theorem \ref{t:exponent}]
Recall that the eigenvalues of $\mathcal{C}_{\alpha,\tau}$ satisfy $\lambda_j(\tau) \asymp (\tau^2 + j^{2/d})^{-\alpha}$. Then we have that
\begin{align*}
\left(\frac{\lambda_j(0)}{\lambda_j(\tau)}-1\right) \asymp (1+\tau^2 j^{-2/d})^\alpha - 1 = \mathcal{O}(j^{-2/d}).
\end{align*}
It follows that
\begin{align}
\label{eq:eigenbound}
\sum_{j=1}^\infty \left(\frac{\lambda_j(0)}{\lambda_j(\tau)}-1\right)^p < \infty\;\;\;\text{if and only if }d<2p.
\end{align}
\begin{enumerate}[(i)]
\item We first prove the `if' part of the statement. We have \mi{$u \sim N(0,\mathcal{C}_0)$}, and so \mi{$\mathbb{E}\langle u,\varphi_j\rangle^2 = \lambda_j(0)$}. Since the terms within the sum are non-negative, by Tonelli's theorem we can bring the expectation inside the sum to see that that
{\mi{\begin{align*}
\mathbb{E}\sum_{j=1}^\infty\left(\frac{1}{\lambda_j(\tau)} - \frac{1}{\lambda_j(0)}\right)\langle u,\varphi_j\rangle^2
&= \sum_{j=1}^\infty\left(\frac{\lambda_j(0)}{\lambda_j(\tau)}-1\right)
\end{align*}}}which is finite if and only if $d < 2$, i.e. $d=1$. It follows that the sum is finite almost surely.

For the converse, suppose that $d \geq 2$ so that the series in (\ref{eq:eigenbound}) diverges when $p=1$. Let $\{\xi_j\}_{j\geq 1}$ be a sequence of i.i.d. $N(0,1)$ random variables so that \mi{$\langle u,\varphi_j\rangle^2$} has the same distribution as $\lambda_j(0)\xi^2$. Define the sequence $\{Z_n\}_{n\geq 1}$ by
\begin{align*}
Z_n &= \sum_{j=1}^n \left(\frac{\lambda_j(0)}{\lambda_j(\tau)}-1\right)\xi_j^2\\
&= \sum_{j=1}^n \left(\frac{\lambda_j(0)}{\lambda_j(\tau)}-1\right) + \sum_{j=1}^n \left(\frac{\lambda_j(0)}{\lambda_j(\tau)}-1\right)(\xi_j^2-1)\\
&=: X_n + Y_n.
\end{align*}
Then the result follows if $Z_n$ diverges with positive probability. By assumption we have that $X_n$ diverges. In order to show that $Z_n$ diverges with positive probability it hence suffices to show that $Y_n$ converges with positive probability. Define the sequence of random variables $\{W_j\}_{j\geq 1}$ by 
\[
W_j = \left(\frac{\lambda_j(0)}{\lambda_j(\tau)}-1\right)(\xi_j^2-1).
\]
It can be checked that
\[
\mathbb{E}(W_j) = 0,\;\;\;\text{Var}(W_j) = 2\left(\frac{\lambda_j(0)}{\lambda_j(\tau)}-1\right)^2.
\]
The series of variances converges if and only if $d\leq 3$, using (\ref{eq:eigenbound}) with $p = 2$. We use Kolmogorov's two series theorem, Theorem 3.11 in \cite{varadhan2001}, to conclude that $Y_n = \sum_{j=1}^n W_j$ converges almost surely and the result follows.

\item Now we have
\begin{align*}
\log\left(\frac{\lambda_j(\tau)}{\lambda_j(0)}\right) &= -\log\left(1-\left(1-\frac{\lambda_j(0)}{\lambda_j(\tau)}\right)\right)\\
&=\left(1-\frac{\lambda_j(0)}{\lambda_j(\tau)}\right) + \frac{1}{2}\left(1-\frac{\lambda_j(0)}{\lambda_j(\tau)}\right)^2 + \text{h.o.t.}
\end{align*}

Let $\{\xi_j\}_{j\geq 1}$ be a sequence of i.i.d. $N(0,1)$ random variables, so that again we have that \mi{$\langle u,\varphi_j\rangle^2$} has the same distribution as $\lambda_j(0)\xi^2$. Then it is sufficient to show that the series
\[
I = \sum_{j=1}^\infty \left[\left(\frac{\lambda_j(0)}{\lambda_j(\tau)}-1\right)\xi_j^2 + \log\left(\frac{\lambda_j(\tau)}{\lambda_j(0)}\right)\right]
\]
is finite almost surely. We use the above approximation for the logarithm to write
\begin{align*}
I &= \sum_{j=1}^\infty\left(\frac{\lambda_j(0)}{\lambda_j(\tau)}-1\right)(\xi_j^2-1)\\
&\hspace{1cm}+ \sum_{j=1}^\infty\left[\frac{1}{2}\left(1-\frac{\lambda_j(0)}{\lambda_j(\tau)}\right)^2 + \text{h.o.t.}\right].
\end{align*}
The second sum converges if and only if $d<4$, i.e. $d\leq 3$. The almost sure convergence of the first term is shown in the proof of part (i).\qed
\end{enumerate}
\end{proof}

\begin{proposition}
\label{prop:point_obs_cts}
Let $D\subseteq\mathbb{R}^d$. Define the construction map $F:X\times\mathbb{R}^+\rightarrow \mathbb{R}^D$ by (\ref{lvlsetmap}). Given $x_0 \in D$ define $\mathcal{G}:X\times\mathbb{R}^+\rightarrow\mathbb{R}$ by $\mathcal{G}(u,\tau) = F(u,\tau)|_{x_0}$. Then $\mathcal{G}$ is continuous at any $(u,\tau) \in X\times\mathbb{R}^+$ with $u(x_0) \neq c_i(\tau)$ for each $i=0,\ldots,n$. In particular, $\mathcal{G}$ is continuous $\mu_0$-almost surely when $\mu_0$ is given by (\ref{eq:prior}). Additionally, $\mathcal{G}$ is uniformly bounded.
\end{proposition}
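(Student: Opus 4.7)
The plan is to unpack the definition of $\mathcal{G}$ explicitly and then exploit the fact that the only discontinuities of $F(\cdot,\cdot)|_{x_0}$ occur at level crossings. Observe that $\mathcal{G}(u,\tau) = \kappa_{i(u,\tau)}$ where $i(u,\tau) \in \{1,\ldots,n\}$ is the unique index satisfying $c_{i-1}(\tau) \leq u(x_0) < c_i(\tau)$ (with $c_0(\tau) = -\infty$, $c_n(\tau) = +\infty$). Uniform boundedness is then immediate from
\[
|\mathcal{G}(u,\tau)| \leq \max\{|\kappa_1|,\ldots,|\kappa_n|\}
\]
for every $(u,\tau) \in X\times\mathbb{R}^+$.

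For continuity, fix $(u,\tau)$ with $u(x_0) \neq c_i(\tau)$ for all $i=1,\ldots,n-1$ (the endpoints $i=0,n$ are vacuous). Let $i^*$ be the unique index with $c_{i^*-1}(\tau) < u(x_0) < c_{i^*}(\tau)$, and set
\[
\delta = \min\bigl\{u(x_0)-c_{i^*-1}(\tau),\, c_{i^*}(\tau)-u(x_0)\bigr\} > 0.
\]
Suppose $(u_k,\tau_k)\to(u,\tau)$ in $X\times\mathbb{R}^+$, i.e., $\|u_k-u\|_\infty\to 0$ and $\tau_k\to\tau$. Then $u_k(x_0)\to u(x_0)$, and since $c_i(\tau) = \tau^{d/2-\alpha}c_i$ depends continuously on $\tau>0$, we also have $c_i(\tau_k)\to c_i(\tau)$ for each $i$. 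For $k$ large enough we obtain $c_{i^*-1}(\tau_k) \leq u_k(x_0) < c_{i^*}(\tau_k)$, so that $\mathcal{G}(u_k,\tau_k)=\kappa_{i^*}=\mathcal{G}(u,\tau)$. This gives continuity at $(u,\tau)$.

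It remains to show that the set of bad points has $\mu_0$-measure zero. Define
\[
B = \bigcup_{i=1}^{n-1}\bigl\{(u,\tau)\in X\times\mathbb{R}^+ \;:\; u(x_0) = c_i(\tau)\bigr\}.
\]
By Tonelli's theorem,
\[
\mu_0(B) \leq \sum_{i=1}^{n-1}\int_0^\infty \mu_0^\tau\bigl(\{u\in X : u(x_0) = c_i(\tau)\}\bigr)\,\pi_0(\dee\tau).
\]
Under $\mu_0^\tau = N(0,\mathcal{C}_\tau)$, the pointwise evaluation $u\mapsto u(x_0)$ is a real-valued Gaussian random variable (well-defined since samples are continuous by Theorem~\ref{t:wmequiv}(ii) and the standing assumption $\alpha - d/2 > 0$), whose distribution has no atoms. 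Hence each inner probability equals zero, giving $\mu_0(B) = 0$ as required. The main subtlety is simply ensuring the pointwise evaluation makes sense and is Gaussian, which is precisely why we work on $X = C(D)$ and impose $\alpha > d/2$; everything else is a direct verification.
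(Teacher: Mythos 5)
Your proof is correct and follows essentially the same approach as the paper: the same unique-index/strict-inequality argument for continuity (phrased sequentially rather than with an explicit $\delta$), and the same ``pointwise evaluation is a non-atomic Gaussian'' argument for the $\mu_0$-almost-sure statement that the paper deploys in its Proposition~\ref{p:app}(ii). The only difference is that you spell out the almost-sure part within this proposition, whereas the paper's proof of Proposition~\ref{prop:point_obs_cts} leaves it implicit and handles it in the companion result.
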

\begin{proof}
The uniform boundedness is clear. For the continuity, let $(u,\tau) \in X\times\mathbb{R}^+$ with  $u(x_0) \neq c_i(\tau)$ for each $i=0,\ldots,n$. Then there exists a unique $j$ such that
\begin{align}
\label{eq:cont1}
c_{j-1}(\tau) < u(x_0) < c_j(\tau).
\end{align}
Given $\delta > 0$, let $(u_\delta,\tau_\delta) \in X\times\mathbb{R}^+$ be any pair such that
\[
\|u_\delta - u\|_\infty + |\tau_\delta-\tau| < \delta.
\]
Then it is sufficient to show that for all $\delta$ sufficiently small, $x_0 \in D_j(u_\delta,\tau_\delta)$, i.e. that
\[
c_{j-1}(\tau_\delta) \leq u_\delta(x_0) < c_j(\tau_\delta).
\]
From this it follows that $G(u_\delta,\tau_\delta) = G(u,\tau)$.

Since the inequalities in (\ref{eq:cont1}) are strict, we can find $\alpha > 0$ such that
\begin{align}
\label{eq:cont2}
c_{j-1} + \alpha < u(x_0) < c_j(\tau) - \alpha.
\end{align}
Now $c_j$ is continuous at $\tau > 0$, and so there exists a $\gamma > 0$ such that for any $\lambda > 0$ with $|\lambda-\tau| < \gamma$ we have
\begin{align}
\label{eq:cont3}
c_j(\lambda) - \alpha/2 < c_j(\tau) < c_j(\lambda) + \alpha/2.
\end{align}
We have that $\|u_\delta - u\|_\infty < \delta$, and so in particular,
\begin{align}
\label{eq:cont4}
u(x_0) - \delta < u_\delta(x_0) < u(x_0) + \delta.
\end{align}
We can combine (\ref{eq:cont2})-(\ref{eq:cont4}) to see that, for $\delta < \gamma$,
\[
c_{j-1}(\tau_\delta) - \delta + \alpha/2 < u_\delta(x_0) < c_j(\tau_\delta) + \delta - \alpha/2
\]
and so in particular, for $\delta < \min\{\gamma,\alpha/2\}$,
\[
c_{j-1}(\tau_\delta) < u_\delta(x_0) < c_j(\tau_\delta).\hfill\qed
\]
\end{proof}

\subsection{Radon-Nikodym Derivatives in Hilbert Spaces}
\label{ssec:rnd}

The following proposition gives an
explicit formula for the density of one Gaussian with
respect to another and is used in defining the acceptance probability
for the length-scale updates in our algorithm.
Although we only use the proposition 
in the case where $H$ is a function space and the mean $m$ is \mi{zero}, 
we provide a proof in the more general case 
where $m$ is an arbitrary element of 
separable Hilbert space $H$ as this setting may be of
independent interest. 
\begin{proposition}
\label{prop:fhdensity}
Let $(H,\langle\cdot,\cdot\rangle,\|\cdot\|)$ be a separable Hilbert space, and let $A, B$ be positive trace-class operators on $H$. Assume that $A$ and $B$ share a common complete set of orthonormal eigenvectors $\{\varphi_j\}_{j\geq 1}$, with the eigenvalues $\{\lambda_j\}_{j\geq 1}$, $\{\gamma_j\}_{j\geq 1}$ defined by
\[
A\varphi_j = \lambda_j\varphi_j,\;\;\; B\varphi_j = \gamma_j\varphi_j
\] 
for all $j \geq 1$. Assume further that the eigenvalues satisfy
\[
\sum_{j=1}^\infty \left(\frac{\lambda_j}{\gamma_j} - 1\right)^2 < \infty.
\]
Let $m \in H$ and define the measures $\mu = N(m,A)$ and $\nu = N(m,B)$. Then $\mu$ and $\nu$ are equivalent, and their Radon-Nikodym derivative is given by
\[
\frac{\dee \mu}{\dee \nu}(u) = \prod_{j=1}^\infty\frac{\gamma_j}{\lambda_j}\cdot \exp\Bigg(\frac{1}{2}\sum_{j=1}^\infty\bigg(\frac{1}{\gamma_j} - \frac{1}{\lambda_j}\bigg)\langle u-m,\varphi_j\rangle^2\Bigg).
\]
\end{proposition}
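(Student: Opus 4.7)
The plan is to pass to coordinates in the shared eigenbasis $\{\varphi_j\}$ and reduce the statement to Kakutani's dichotomy for countable products of one-dimensional Gaussians. First I would translate by $m$ to reduce to the mean-zero case (this leaves the Radon--Nikodym derivative unchanged), and introduce the coordinate map $u \mapsto (\xi_j)_{j\ge 1}$ with $\xi_j := \langle u-m, \varphi_j\rangle$. Under $\mu$ the $\xi_j$ are independent with $\xi_j \sim N(0,\lambda_j)$, and under $\nu$ they are independent with $\xi_j \sim N(0,\gamma_j)$; hence the push-forwards are the product measures $\bigotimes_j N(0,\lambda_j)$ and $\bigotimes_j N(0,\gamma_j)$, and the claim becomes one about equivalence of an infinite product of one-dimensional Gaussians.

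Next I would construct a candidate density as the limit of the finite-dimensional ones. On the $\sigma$-algebra $\mathcal{F}_n$ generated by $\xi_1,\ldots,\xi_n$, the Radon--Nikodym derivative is the explicit product
\[
R_n := \prod_{j=1}^n \left(\frac{\gamma_j}{\lambda_j}\right)^{\!1/2} \exp\!\left(\frac{1}{2}\sum_{j=1}^n \Bigl(\frac{1}{\gamma_j}-\frac{1}{\lambda_j}\Bigr)\xi_j^2\right),
\]
and a short computation with the moment generating function of $\chi^2_1$ gives $\mathbb{E}^\nu R_n = 1$, so $(R_n,\mathcal{F}_n)$ is a positive $\nu$-martingale. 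To obtain almost-sure convergence I would write $\xi_j = \sqrt{\gamma_j}\,\eta_j$ with $\eta_j\sim N(0,1)$ i.i.d.\ under $\nu$, and set $a_j := 1 - \gamma_j/\lambda_j$; then
\[
\log R_n = \tfrac{1}{2}\sum_{j=1}^n a_j(\eta_j^2-1) + \tfrac{1}{2}\sum_{j=1}^n\bigl[a_j + \log(1-a_j)\bigr].
\]
The summability hypothesis $\sum_j(\lambda_j/\gamma_j-1)^2<\infty$ forces $\sum_j a_j^2<\infty$, so the deterministic tail is absolutely convergent (since $a_j+\log(1-a_j)=O(a_j^2)$) while the random tail has summable variances $2a_j^2$; Kolmogorov's two-series theorem then yields $\nu$-a.s.\ convergence of $\log R_n$ to a finite limit, so $R_n \to R$ $\nu$-a.s.\ for some finite nonnegative random variable $R$ which, after exponentiating, has the form claimed in the statement.

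The main obstacle is upgrading this a.s.\ convergence to $L^1(\nu)$-convergence, since a positive martingale need not be uniformly integrable and without $L^1$-convergence one cannot identify $R$ with $\mathrm{d}\mu/\mathrm{d}\nu$ on $\bigvee_n \mathcal{F}_n$. For this I would invoke Kakutani's dichotomy for infinite product measures: writing $p_j, q_j$ for the densities of $N(0,\lambda_j)$ and $N(0,\gamma_j)$, a routine Gaussian integral gives the Hellinger affinity
\[
\int \sqrt{p_j q_j}\,\mathrm{d}x \;=\; \left(\frac{2\sqrt{\lambda_j \gamma_j}}{\lambda_j+\gamma_j}\right)^{\!1/2},
\]
so that $1-\int\sqrt{p_j q_j}\,\mathrm{d}x$ is comparable to $(\lambda_j/\gamma_j-1)^2$ as $\lambda_j/\gamma_j \to 1$. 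The summability hypothesis is exactly what the Kakutani criterion needs, namely $\prod_j \int\sqrt{p_j q_j}\,\mathrm{d}x > 0$; this gives both $\mu \sim \nu$ and $R_n \to \mathrm{d}\mu/\mathrm{d}\nu$ in $L^1(\nu)$. Matching the $L^1$ limit with the a.s.\ limit from the previous paragraph identifies $\mathrm{d}\mu/\mathrm{d}\nu$ as the claimed infinite product times exponential, completing the argument.
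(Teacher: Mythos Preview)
Your proof is correct and follows essentially the same approach as the paper: pass to coordinates in the common eigenbasis, identify $\mu$ and $\nu$ with infinite products of one-dimensional Gaussians, and appeal to Kakutani's theorem. The paper's version is shorter because it cites the product-measure density directly from da~Prato rather than unpacking the martingale convergence and Hellinger-affinity computation as you do, but your self-contained argument is equally valid; incidentally, your prefactor $(\gamma_j/\lambda_j)^{1/2}$ is the correct one-dimensional Gaussian ratio and agrees with what the paper actually uses in the main text, despite the missing square root in the proposition as stated.
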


\begin{proof}
The assumption on summability of the eigenvalues means that the Feldman-H\'ajek theorem applies, and so we know that $\mu$ and $\nu$ are equivalent. We show that the Radon-Nikodym derivative is as given above.

Define the product measures $\hat{\mu},\hat{\nu}$ on $\mathbb{R}^\infty$ by
\[
\hat{\mu} = \prod_{j=1}^\infty \hat{\mu}_j,\;\;\;\hat{\nu} = \prod_{j=1}^\infty \hat{\nu}_j
\]
where $\hat{\mu}_j = N(0,\lambda_j)$, $\hat{\nu}_j = N(0,\gamma_j)$. As a consequence of a result of Kakutani, see \cite{daprato} Proposition 1.3.5, we have that $\hat{\mu}\sim\hat{\nu}$ with
\begin{align*}
\frac{\dee \hat{\mu}}{\dee \hat{\nu}}(x) &= \prod_{j=1}^\infty\frac{\dee \hat{\mu}_j}{\dee \hat{\nu}_j}(x_j)\\
&= \prod_{j=1}^\infty\frac{\gamma_j}{\lambda_j}\cdot \exp\Bigg(\frac{1}{2}\sum_{j=1}^\infty\bigg(\frac{1}{\gamma_j} - \frac{1}{\lambda_j}\bigg)x_j^2\Bigg).
\end{align*}
We associate $H$ with $\mathbb{R}^\infty$ via the map $G:H\rightarrow\mathbb{R}^\infty$, given by
\[
G_ju = \langle u,\varphi_j\rangle,\;\;\;j\geq 1.
\]
Note that the image of $G$ is $\ell^2 \subseteq\mathbb{R}^\infty$, and $G:H\rightarrow\ell^2$ is an isomorphism. Since $A$ and $B$ are trace-class, samples from $\hat{\mu}$ and $\hat{\nu}$ almost surely take values in $\ell^2$. $G^{-1}$ is hence almost surely defined on samples from $\hat{\mu}$ and $\hat{\nu}$. Define the translation map $T_m:H\rightarrow H$ by $T_m u = u + m$. Then by the Karhunen-Lo\`eve theorem, the measures $\mu$ and $\nu$ can be expressed as the push-forwards
\[
\mu = T_m^\#(G^{-1})^\#\hat{\mu},\;\;\;\nu = T_m^\#(G^{-1})^\#\hat{\nu}.
\]
Now let $f:H\rightarrow\mathbb{R}$ be bounded measurable, then we have
\begin{align*}
\int_H f(u)\,\mu(\dee u) &= \int_H f(u)\,\big[T_m^\#(G^{-1})^\#\hat{\mu}\big](\dee u)\\
&= \int_{\mathbb{R}^\infty} f(G^{-1}x+m)\,\hat{\mu}(\dee x)\\
&= \int_{\mathbb{R}^\infty} f(G^{-1}x+m)\frac{\dee \hat{\mu}}{\dee \hat{\nu}}(x)\,\hat{\nu}(\dee x)\\
&= \int_H f(u)\frac{\dee \hat{\mu}}{\dee \hat{\nu}}(G(u-m))\,\big[T_m^\#(G^{-1})^\#\hat{\nu}\big](\dee u)\\
&= \int_H f(u)\frac{\dee \hat{\mu}}{\dee \hat{\nu}}(G(u-m))\,\nu(\dee u).
\end{align*}
From this is follows that we have
\begin{align*}
\frac{\dee \mu}{\dee \nu}(u) &= \frac{\dee\hat{\mu}}{\dee\hat{\nu}}(G(u-m))\\
&= \prod_{j=1}^\infty\frac{\gamma_j}{\lambda_j}\cdot \exp\Bigg(\frac{1}{2}\sum_{j=1}^\infty\bigg(\frac{1}{\gamma_j} - \frac{1}{\lambda_j}\bigg)\langle u-m,\varphi_j\rangle^2\Bigg).
\end{align*}\qed
\end{proof}
\begin{remark}
The proposition above, in the case $m=0$, is given as Theorem 1.3.7 in \cite{daprato} except that, there, the factor before the exponential is omitted. This is because it does not depend on $u$, and all measures involved are probability measures and hence normalized. We retain the factor as we are interested in the precise value of the derivative for the MCMC algorithm; in particular its dependence on
the length-scale.$\hfill\qed$
\end{remark}

\end{appendix}

\noindent {\bf Acknowledgements.}
AMS is grateful to DARPA, EPSRC and ONR for 
financial support. MMD is supported by the EPSRC-funded
MASDOC graduate training program. The authors are grateful to Dan Simpson
for helpful discussions. \mmd{The authors are also grateful for discussions with Omiros Papaspiliopoulos about links with probit. The authors would also like to the two anonymous referees for their comments that have helped improve the quality of the paper.} This research utilized Queen Mary's MidPlus computational facilities, supported by QMUL Research-IT and funded by EPSRC grant EP/K000128/1.

\bibliography{references}
\bibliographystyle{plain}

\end{document}